\documentclass[a4paper,11pt,notitlepage,reqno]{amsart}

\usepackage[utf8]{inputenc}
\usepackage[T1]{fontenc}
\usepackage{lmodern}
\usepackage[provide=*,main=english,spanish]{babel}
\usepackage{csquotes}

\usepackage[
    margin=28mm,
    marginparwidth=25mm,
    marginparsep=2mm,
    bottom=25mm
]{geometry}
\usepackage[shortlabels]{enumitem}
\usepackage{flafter}
\usepackage{graphicx}

\usepackage{xcolor}
\definecolor{linkInternal}{HTML}{6A1B9A}
\definecolor{linkCitation}{HTML}{00796B}
\definecolor{linkExternal}{HTML}{1565C0}

\usepackage{amssymb}
\usepackage{mathrsfs}
\usepackage{mathtools}

\usepackage[
    backend=biber,
    style=numeric,
    citestyle=numeric-comp,
    sorting=nyt,
    sortcites=true,
    giveninits=true,
    maxbibnames=10,
    minbibnames=1,
    url=false,
    doi=false,
    eprint=false
]{biblatex}

\DeclareNameAlias{author}{given-family}
\DeclareNameAlias{editor}{given-family}
\DeclareNameAlias{translator}{given-family}
\DeclareFieldFormat*{title}{\mkbibemph{#1}}
\DeclareFieldFormat*{journaltitle}{#1}
\DeclareFieldFormat*{booktitle}{#1}
\DeclareFieldFormat[article]{volume}{\mkbibbold{#1}}
\DeclareFieldFormat[article]{number}{\bibstring{number}~#1}
\DeclareFieldFormat[article]{pages}{#1}
\AtBeginBibliography{\setlength{\emergencystretch}{2em}}

\renewbibmacro{in:}{%
  \ifentrytype{article}{}{\printtext{\bibstring{in}\intitlepunct}}}

\renewbibmacro*{journal+issuetitle}{%
  \usebibmacro{journal}%
  \setunit*{\addspace}%
  \iffieldundef{series}{}{\newunit\printfield{series}\setunit{\addspace}}%
  \printfield{volume}%
  \setunit{\addspace}%
  \usebibmacro{issue+date}%
  \setunit{\addcolon\space}%
  \usebibmacro{issue}%
  \setunit{\addcomma\space}%
  \printfield{number}%
  \setunit{\addcomma\space}%
  \printfield{eid}%
  \newunit}

\usepackage[
    pdfencoding=unicode,
    psdextra,
    colorlinks=true,
    linkcolor=linkInternal,
    citecolor=linkCitation,
    urlcolor=linkExternal,
    hypertexnames=false
]{hyperref}
\usepackage{cleveref}

\DeclarePairedDelimiter{\norm}{\lVert}{\rVert}
\DeclarePairedDelimiter{\floor}{\lfloor}{\rfloor}

\DeclareMathOperator{\ran}{Ran}
\DeclareMathOperator{\codim}{codim}

\newcommand{\Real}{\mathfrak{Re}\,}
\newcommand{\Imag}{\mathfrak{Im}\,}

\newcommand{\N}{\mathbb{N}}

\newcommand{\R}{\mathbb{R}}
\newcommand{\C}{\mathbb{C}}
\newcommand{\D}{\mathbb{D}}

\newcommand{\Bloch}{\mathscr{B}}
\newcommand{\BB}{\mathcal{B}}
\newcommand{\LL}{\mathcal{L}}

\newcommand{\TT}{\mathcal{T}}
\newcommand{\EE}{\mathcal{E}}
\newcommand{\CC}{\mathcal{C}}

\newcommand{\Ha}{\mathcal{H}}
\newcommand{\Ah}{\mathcal{A}}

\theoremstyle{plain}
\newtheorem{theorem}{Theorem}[section]
\newtheorem*{theorem*}{Theorem}

\newtheorem{proposition}[theorem]{Proposition}
\newtheorem{lemma}[theorem]{Lemma}

\theoremstyle{definition}

\theoremstyle{remark}
\newtheorem{remark}[theorem]{Remark}

\numberwithin{equation}{section}

\makeatletter
\renewcommand*{\subsection}{%
  \@startsection{subsection}{2}%
    {\z@}%
    {.5\linespacing\@plus.7\linespacing}%
    {.5\linespacing}%
    {\normalfont\itshape}}
\makeatother

\begin{document}

\title[Hausdorff Matrices on the BMOA and Bloch spaces]{Hausdorff Matrices on the BMOA and Bloch spaces}

\author[P. Galanopoulos]{Petros Galanopoulos}
\address{\newline 
    Petros Galanopoulos \newline
    Department of Mathematics, Aristotle University of Thessaloniki,
    54124 Thessaloniki, Greece.}
\email{petrosgala@math.auth.gr}

\author[A. Mahillo]{Alejandro Mahillo}
\address{\newline
    Alejandro Mahillo \newline
    Departamento de Matemáticas, Estadística y Computación,
    Universidad de Cantabria,
    39005 Santander, Spain.}
\email{alejandro.mahillo@unican.es}

\author[J. Oliva-Maza]{Jesus Oliva-Maza}
\address{\newline
    Jesus Oliva-Maza \newline
    Departamento de Matemáticas, Instituto Universitario de Matemáticas y Aplicaciones,
    Universidad de Zaragoza, 50009 Zaragoza, Spain.}
\email{joliva@unizar.es}

\author[A. G. Siskakis]{Aristomenis G. Siskakis}
\address{\newline
    Aristomenis G. Siskakis \newline
    Department of Mathematics, Aristotle University of Thessaloniki,
    54124 Thessaloniki, Greece.}
\email{siskakis@math.auth.gr}

\thanks{A. Mahillo and J. Oliva-Maza have been partially supported by Grant PID2022-137294NBI00 funded by MICIU/AEI/10.13039/501100011033 and by Project E48-20R, Gobierno de Aragón, Spain.}
\thanks{A. Mahillo has also been partially supported by Grant PRE-2020-092311 funded by MICIU/AEI/10.13039/501100011033 and by ``ESF Investing in your future''.}
\subjclass[2020]{46E15, 47A25, 47B33, 47D06, 47G10}
\keywords{Hausdorff operators, spectrum, composition semigroups, Dirichlet spaces, Bloch space, BMOA}

\begin{abstract}
We determine the spectrum and Fredholm essential spectrum of Hausdorff operators $\Ha_\mu$ on BMOA and the Bloch space $\Bloch$, for suitable finite positive Borel measures $\mu$ on $(0,1]$.
\end{abstract}

\maketitle
\section{Introduction}\label{sec:introduction}

Given a scalar sequence $\{\mu_n\}_{n=0}^\infty$, the induced Hausdorff matrix is the lower triangular matrix $\Ha_{(\mu_n)}=(c_{n,k})_{n,k=0}^\infty$ with entries
\[
    c_{n,k}=\binom{n}{k}\Delta^{n-k}\mu_k, \quad 0\leq k\leq n,
\]
where $\Delta$ is the forward difference operator $\Delta\mu_n=\mu_n-\mu_{n+1}$, $n=0,1,2,\ldots$, and $\Delta^j$ are its iterates:
\[
    \Delta^0=I,\text{ the identity},\qquad\text{ and }\qquad
    \Delta^j=\Delta\circ\Delta^{j-1}, \quad j=1,2,\ldots.
\]
An important special case occurs when
\[
    \mu_n=\int_{(0,1]}t^n\,d\mu(t), \quad n=0,1,2,\ldots,
\]
is the moment sequence of a finite positive Borel measure $\mu$ on $(0,1]$. These matrices are denoted by $\Ha_\mu$ and their entries are found to be
\begin{equation}\label{entries}
    c_{n,k}=\binom{n}{k}\int_{(0,1]}t^k(1-t)^{n-k}\,d\mu(t), \quad 0\leq k\leq n.
\end{equation}

We also consider the transposes $\Ah_\mu=\Ha_\mu^{T}$  called \textit{quasi-Hausdorff matrices}.  The matrices $\Ha_\mu$ and $\Ah_\mu$ were originally studied in connection with the summability theory of series, and later as operators acting on sequence spaces; see \cite{Deddens1978,Hardy1943,Hardy1949,Leibowitz1973,Rhoades1971}. Continuous versions of these operators were also studied on spaces of integrable functions. The recent review article \cite{LiflyandMirotin2024} gives a very good account of work on various versions of Hausdorff operators.

These matrices can also be considered to act on spaces of analytic functions. To fix notation, let $\D$ be the unit disc in the complex plane $\C$, and let $Hol(\D)$ be the space of analytic functions on $\D$. For $f(z)=\sum_{n=0}^\infty a_nz^n\in Hol(\D)$, consider the formal power series obtained by letting the matrices act on the sequence of Taylor coefficients, that is,
\begin{equation}\label{Haus}
    \Ha_\mu(f)(z)=\sum_{n=0}^\infty\left(\sum_{k=0}^n c_{n,k}a_k\right)z^n,
\end{equation}
and
\begin{equation}\label{quasiHaus}
    \Ah_\mu(f)(z)=\sum_{n=0}^\infty\left(\sum_{k=n}^\infty c_{k,n}a_k\right)z^n.
\end{equation}
It is known that the series \eqref{Haus} converges and represents an analytic function on $\D$ for every $f\in Hol(\D)$; see \cite[Lemma 1.1]{GalanopoulosPapadimitrakis2006}. On the other hand, the series \eqref{quasiHaus} is more delicate, since its inner sums need not converge for a general $f\in Hol(\D)$. They are finite when $f$ is a polynomial.

If $X$ is a Banach space consisting of analytic functions on $\D$, one may study boundedness, compactness, and other operator-theoretic properties of $\Ha_\mu$ and $\Ah_\mu$ on $X$ in terms of the inducing measure $\mu$. The first such study was in \cite{GalanopoulosSiskakis2001} on Hardy spaces $H^p$, and then in \cite{GalanopoulosPapadimitrakis2006} on Bergman spaces $A^p$, the Dirichlet space, the Bloch space $\Bloch$, and BMOA, among other spaces.

A key fact in the study of these operators is that they can be represented as averages of weighted composition operators, see \cite{GalanopoulosSiskakis2001,GalanopoulosPapadimitrakis2006}. More precisely,
\begin{equation}\label{eq:integral_S}
    \Ha_\mu(f)(z)=\int_{(0,1]}\frac{f(\Phi_t(z))}{(t-1)z+1}\,d\mu(t), \quad z\in\D,
\end{equation}
for all $f\in Hol(\D)$, where
\[
    \Phi_t(z)=\frac{tz}{(t-1)z+1}, \quad 0<t\leq1,
\]
and, for appropriate \(f \in Hol(\D)\),
\begin{equation}\label{eq:integral_T}
    \Ah_\mu(f)(z) =\int_{(0,1]}f(\Psi_t(z))\,d\mu(t), \quad \Psi_t(z)=tz+1-t.
\end{equation}
Both $\Phi_t$ and $\Psi_t$ are self-maps of $\D$ for $0<t\leq1$, and they induce bounded composition operators on most classical Banach spaces of analytic functions. A computation shows that each family is a multiplicative semigroup under composition:
\begin{equation}\label{mult-semi}
    \Phi_t\circ\Phi_s=\Phi_{ts}, \quad \Psi_t\circ\Psi_s=\Psi_{ts}, \quad 0<t,s\leq1.
\end{equation}
This fact will be used below. These representations, together with known properties of composition operators, make it possible to study $\Ha_\mu$ and $\Ah_\mu$ on various function spaces. We collect some relevant results from \cite{GalanopoulosSiskakis2001,GalanopoulosPapadimitrakis2006} in the following theorem.

\medskip
\noindent\textbf{Theorem A.} \textit{Let $\mu$ be a finite positive Borel measure on $(0,1]$.
\begin{enumerate}[(i)]
    \item If $1<p,q<\infty$ and $1/p+1/q=1$, the following are equivalent: $\Ha_\mu$ is bounded on $H^p$; $\Ah_\mu$ is bounded on $H^q$; and $\int_{(0,1]}t^{-1/q}\,d\mu(t)<\infty$.
    \item The following are equivalent: $\Ha_\mu$ is bounded on $H^1$; $\Ah_\mu$ is bounded on BMOA; $\Ah_\mu$ is bounded on $\Bloch$; and $\int_{(0,1]}\log(1/t)\,d\mu(t)<\infty$.
    \item The following are equivalent: $\Ah_\mu$ is bounded on $H^1$; $\Ha_\mu$ is bounded on BMOA; $\Ha_\mu$ is bounded on $\Bloch$; and $\int_{(0,1]}t^{-1}\,d\mu(t)<\infty$.
\end{enumerate}}
Throughout this paper all measures are positive. Most of the arguments extend, \emph{mutatis mutandis}, to complex measures under the corresponding total-variation assumptions. Since the results above do not provide boundedness characterizations for complex measures, we do not formulate that extension here.

The purpose of this article is to study the spectral properties of $\Ha_\mu$ on BMOA and $\Bloch$, namely the spectrum and the essential spectrum. Recall that, for a closed operator $T$ on a Banach space $X$, its resolvent set and spectrum are
\[
\rho(T)=\{\lambda\in\C:\lambda I-T:D(T)\to X
\text{ is bijective with bounded inverse}\},
\qquad \sigma(T)=\C\setminus\rho(T),
\]
and the essential spectrum is given by
\[
\sigma_{ess}(T)=\{\lambda\in\C:\lambda I-T\text{ is not Fredholm}\}.
\]
Recall also that an operator is Fredholm if its range is closed and its kernel is finite-dimensional and its range has finite codimension. In this case, its index is defined as $\operatorname{Ind}(T)=\dim\ker T-\codim\ran T$.

To reach our goal, we use a modification of a method in a recent paper \cite{AbadiasOliva-Maza2024}, where the spectra and essential spectra of certain generalized Hausdorff matrices are found on a family of function spaces, including Hardy and Bergman spaces. The method in \cite{AbadiasOliva-Maza2024} consists essentially of representing $\Ha_\mu$ as an average of a strongly continuous semigroup and then using functional calculus and spectral mapping theorems to obtain its spectrum.

The semigroups used here are obtained by reparametrizing the multiplicative families in \eqref{mult-semi}. Let $\kappa:(0,1]\to[0,\infty)$ be given by $\kappa(t)=-\log t$, and let $\nu=\kappa(\mu)$ be the pushforward measure, so that $\nu(E)=\mu(\kappa^{-1}(E))$ for every Borel set $E\subseteq[0,\infty)$. The maps
\[
    \phi_t(z)=\frac{e^{-t}z}{(e^{-t}-1)z+1}, \quad
    \psi_t(z)=e^{-t}z+1-e^{-t}, \quad t\geq0,
\]
are additive semigroups of self-maps of $\D$:
\[
    \phi_t\circ\phi_s=\phi_{t+s}, \qquad
    \psi_t\circ\psi_s=\psi_{t+s}, \qquad t,s\geq0.
\]
After the change of variables, the integral representations become
\[
    \Ha_\mu(f)(z)=\int_{[0,\infty)}\frac{f(\phi_t(z))}{(e^{-t}-1)z+1}\,d\nu(t),
    \quad \Ah_\mu(f)(z)=\int_{[0,\infty)}f(\psi_t(z))\,d\nu(t).
\]
The connection between the Cesàro operator and composition semigroups was first exploited in \cite{Cowen1984,Siskakis1987_Cesaro}; see also \cite{Galanopoulos2001,AlemanPersson2010,Persson2008}. The work \cite{AbadiasOliva-Maza2024} extends this approach to more general Hausdorff operators.

However, this method cannot be applied directly on BMOA and $\Bloch$, since these spaces do not support nontrivial strongly continuous semigroups of composition operators; see \cite{BlascoContrerasDiaz-MadrigalMartinezPapadimitrakisSiskakis2013,AndersonJovovicSmith2017,Gallardo-GutierrezSiskakisYakubovich2023}. To overcome this, we use duality. For BMOA we work with $\Ah_\mu$ on $H^1$. For $\Bloch$ we use the matrix operator $\BB_\mu$, which is given initially on polynomials in $A^1$ by
\begin{equation}
	\label{eq:Bmu-definition}
    \BB_\mu(f)(z)=\sum_{k=0}^\infty\left((k+1)\sum_{n=k}^\infty\frac{c_{n,k}}{n+1}a_n\right)z^k,
\end{equation}
and whose dual operator corresponds to \(\Ha_\mu\) on \(\Bloch\)
under the Bloch--Bergman duality.
The crucial observation is that, under suitable assumptions, these operators can be written as functions of the infinitesimal generators \(A_{H^p}\), \(B_{A^p}\) of suitable semigroups,
\[
    \Ah_\mu=\LL\nu(-A_{H^p}), \qquad \BB_\mu=\LL\nu(-B_{A^p}),
\]
where \(\mathcal{L}\nu\) denotes the Laplace transform of \(\nu\), see \Cref{subsec:functional_calculus}.
Functional calculus for sectorial operators transfers their spectral properties to the averages, and duality then transfers those properties to $\Ha_\mu$ on BMOA and $\Bloch$.

Before stating our main results, we introduce some notation appearing in them. We denote the Mellin transform of a finite positive measure $\mu$ on \((0,1]\) by
\[
    \mathcal M\mu(z)=\int_{(0,1]}t^{z-1}\,d\mu(t),
\]
whenever $t^{\Real z-1}$ is integrable with respect to $\mu$. We denote by $\EE$ the extended Dunford--Riesz class, that is, the class of holomorphic functions on some complex sector of angle $\omega\in (\pi/2,\pi)$, which have finite polynomial limits at $0$ and $\infty$ within that sector
(see \Cref{subsec:functional_calculus} for details). 
In the sums below, $\operatorname{ord}_z(\mathcal M\mu-\lambda)$ denotes the order of the zero at $z$, so solutions are counted with multiplicity.

\begin{theorem}\label{th:spectral_result_Hausdorff_BMOA}
Let $\mu$ be a finite positive Borel measure on $(0,1]$ satisfying $\int_{(0,1]}t^{-1}\,d\mu(t)<\infty$. If $\mathcal M\mu\in\EE$, then the spectrum and essential spectrum of $\Ha_\mu$ on BMOA are
\[
    \sigma(\Ha_\mu)=\{\mathcal M\mu(z):\Real z\geq0\}\cup\{\mu(\{1\})\},
\]
\[
    \sigma_{ess}(\Ha_\mu)=\{\mathcal M\mu(z):\Real z=0\}\cup\{\mu(\{1\})\}.
\]
Moreover, for each $\lambda\notin\sigma_{ess}(\Ha_\mu)$, the operator $\lambda I-\Ha_\mu$ is injective and
\[
    \codim\ran(\lambda I-\Ha_\mu)
    =\sum_{\substack{\Real z>0\\\mathcal M\mu(z)=\lambda}}\operatorname{ord}_z(\mathcal M\mu-\lambda)<\infty.
\]
\end{theorem}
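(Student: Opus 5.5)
The argument goes through duality. By Fefferman's theorem, BMOA is the dual of $H^1$ under the Cauchy pairing $\langle f,g\rangle=\lim_{r\to1}\sum a_n\overline{b_n}r^n$. A check on monomials shows that $\Ha_\mu$ on BMOA is the Banach adjoint of $\Ah_\mu$ on $H^1$: both are induced by the matrix $(c_{n,k})$ and its transpose. Under the hypothesis $\int t^{-1}\,d\mu<\infty$, Theorem A(iii) gives boundedness of $\Ah_\mu$ on $H^1$. Density of polynomials then identifies the two operators via this pairing. Hence
\[
\sigma(\Ha_\mu)=\sigma(\Ah_\mu),\qquad \sigma_{ess}(\Ha_\mu)=\sigma_{ess}(\Ah_\mu),
\]
and for $\lambda$ outside the essential spectrum,
\[
\dim\ker(\lambda-\Ha_\mu)=\codim\ran(\lambda-\Ah_\mu),\qquad \codim\ran(\lambda-\Ha_\mu)=\dim\ker(\lambda-\Ah_\mu),
\]
by the closed range theorem. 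It therefore suffices to prove the dual statement for $\Ah_\mu$ on $H^1$: spectrum $\{\mathcal M\mu(z):\Real z\ge0\}\cup\{\mu(\{1\})\}$, essential spectrum the boundary curve together with $\mu(\{1\})$, $\lambda-\Ah_\mu$ surjective, and kernel dimension equal to the counted number of zeros of $\mathcal M\mu-\lambda$ in $\Real z>0$.

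For $\Ah_\mu$ on $H^1$ I use the semigroup $T_t f=f\circ\psi_t$. It is strongly continuous on $H^1$, with generator $A f(z)=(1-z)f'(z)$, and $\|T_t\|$ is uniformly bounded (the $\psi_t$ are affine with fixed point $1$). Hence $-A$ is sectorial. By the representation $\Ah_\mu=\int T_t\,d\nu(t)$ and the functional calculus of \Cref{subsec:functional_calculus}, we get $\Ah_\mu=\LL\nu(-A)$, since $\LL\nu(z)=\int e^{-tz}d\nu=\mathcal M\mu(z+1)$. The hypothesis $\mathcal M\mu\in\EE$ places $\LL\nu$ in the extended Dunford--Riesz class, so the spectral mapping theorem $\sigma(f(-A))=f(\sigma(-A))\cup\{f(\infty)\}$ applies, with $f(\infty)=\nu(\{0\})=\mu(\{1\})$. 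The essential-spectrum analogue is also needed, which I would derive from the spectral mapping for the Calkin-algebra image or by direct Fredholm factorization.

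Next, the spectral picture of $A$ on $H^1$. The eigenfunctions of $-A$ are $(1-z)^{s}$ with eigenvalue $s$, and these lie in $H^1$ exactly when $\Real s>-1$. A standard computation (solving $(1-z)f'-\lambda f=g$ explicitly by an integral operator) should give:
\begin{itemize}
\item $\sigma(-A)=\{\Real w\ge -1\}$ after the shift, i.e. $\{\Real s\ge-1\}$;
\item $\sigma_{ess}(-A)=\{\Real s=-1\}$;
\item for $\Real s>-1$, the operator $s+A$ is surjective with one-dimensional kernel spanned by $(1-z)^s$.
\end{itemize}
Translating through $\LL\nu(s)=\mathcal M\mu(s+1)$, and setting $z=s+1$, yields the claimed sets, with $\Real z=0$ giving the essential spectrum.

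The index count follows by factorization. Fix $\lambda\notin\sigma_{ess}$. Since $\mathcal M\mu-\lambda$ has no zeros on $\Real z=0$ and its limit at infinity is $\mu(\{1\})-\lambda\neq0$, it has finitely many zeros $z_j$ in $\Real z>0$, counted with multiplicity. Write
\[
\LL\nu(s)-\lambda=\prod_j\frac{s-s_j}{s+c}\,h(s),
\]
with $h$ invertible in the calculus and $c$ large. Then
\[
\lambda-\Ah_\mu=-\prod_j(s_j+A)(c-A)^{-1}\,h(-A).
\]
Each factor $(s_j+A)$ is surjective Fredholm with index $1$, so the product is surjective with kernel dimension equal to the number of zeros counted with multiplicity. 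This gives the codimension formula for $\Ha_\mu$ and injectivity of $\lambda-\Ha_\mu$.

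The main obstacle is the precise description of $A$ on $H^1$: the resolvent estimates, and especially the claim that $\Real s=-1$ is exactly the essential spectrum with no extra spectral points. I would attack this by conjugating with $f\mapsto$ the corresponding operator on the half-plane via $z\mapsto(1+z)/(1-z)$, where the semigroup becomes a dilation and the resolvent is explicitly a Hardy-type averaging operator. Its boundedness and the failure of closed range on the boundary line should then follow from Hardy inequalities and approximate eigenvectors $(1-z)^{-1+i\tau+\varepsilon}$.
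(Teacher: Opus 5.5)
Your overall architecture matches the paper: duality with $H^1$, $\Ah_\mu=\int T_t\,d\nu(t)$, the $\EE$-calculus with its spectral mapping theorems, and the factorization for the index. However, the calculus is set up on the wrong operator. $T_t$ is \emph{not} uniformly bounded on $H^1$. Your own eigenfunctions satisfy $T_t(1-z)^s=e^{-st}(1-z)^s$ with $-1<\Real s<0$, so $\norm{T_t}_{H^1}\ge e^{(1-\varepsilon)t}$ for every $\varepsilon>0$; in fact $\norm{T_t}_{H^1}\asymp e^{t}$. Thus $-A$ is not sectorial, as your own $\sigma(-A)=\{\Real s\ge-1\}$ already shows, and $\LL\nu(-A)$ cannot be formed in the $\EE$-calculus of $-A$. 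The sectorial operator is $I-A$, since $e^{-t}T_t$ is uniformly bounded, and $\Ah_\mu=\mathcal M\mu(I-A)$. This is precisely where $\mathcal M\mu\in\EE$ (that is, $\LL\nu\in\EE_1$) is used. The condition $\int t^{-1}\,d\mu=\int e^{t}\,d\nu<\infty$ is what makes the average converge and coincide with the calculus. After this shift, your substitution $z=s+1$ gives the right sets. A minor point: with your sesquilinear pairing the adjoint of $\lambda I-\Ah_\mu$ is $\bar\lambda I-\Ha_\mu$. The conjugation is removed by $\mathcal M\mu(\bar z)=\overline{\mathcal M\mu(z)}$, which holds because $\mu$ is positive.

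The genuine gap is the analytic core, which you defer to a ``standard computation'' and then flag as the main obstacle. You need that $\lambda I-A$ is surjective on $H^1$ for $\Real\lambda<1$, i.e.\ Fredholm of index $1$ there. Everything downstream depends on it: the absence of essential spectrum in the open half-plane, surjectivity of your factors $s_j+A$, hence surjectivity of $\lambda I-\Ah_\mu$, injectivity of $\lambda I-\Ha_\mu$, and the codimension formula. Solving the ODE only yields the candidate $g=(1-z)^{-\lambda}\bigl(K-\int_0^z(1-\xi)^{\lambda-1}f(\xi)\,d\xi\bigr)$. Proving $g\in H^1$ for arbitrary $f\in H^1$ is the whole difficulty, because neither natural integral representation of $g$ is controlled for general $f$.

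The paper splits $f=\sum_k\binom{n}{k}z^k(1-z)^{n-k}f$ with $n$ large. Pieces divisible by $(1-z)^\delta$ satisfy $\norm{T_tf_k}_{H^1}\lesssim e^{(1-\delta)t}$, since $1-\psi_t=e^{-t}(1-z)$, so $\int_0^\infty e^{-\lambda t}T_tf_k\,dt$ converges. Pieces with a zero of high order at $0$ are rewritten as an integral along the orbit $t\mapsto\phi_t(z)$, which runs from $z$ to $0$; there a weighted composition estimate gives decay $e^{(\Real\lambda-1)t}$. The line $\Real\lambda=1$ then lies in the essential spectrum because the index jumps from $1$ to $0$.

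Your approximate-eigenvector argument would also place that line in $\sigma_{ess}(A)$, but it says nothing about the open half-plane. The half-plane conjugation you sketch is not carried out. Under $u=1/(1-z)$ the flow is $u\mapsto e^tu$ on $\{\Real u>1/2\}$, not a dilation of a standard half-plane, and $H^1$ becomes a weighted space there. So as it stands it does not close the gap.
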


\begin{theorem}\label{th:spectral_result_Hausdorff_Bloch}
Let $\mu$ be a finite positive Borel measure on $(0,1]$ satisfying $\int_{(0,1]}t^{-1}\,d\mu(t)<\infty$. If $\mathcal M\mu\in\EE$, then the spectrum and essential spectrum of $\Ha_\mu$ on $\Bloch$ are
\[
    \sigma(\Ha_\mu)=\{\mathcal M\mu(z):\Real z\geq0\}\cup\{\mu(\{1\})\},
\]
\[
    \sigma_{ess}(\Ha_\mu)=\{\mathcal M\mu(z):\Real z=0\}\cup\{\mu(\{1\})\}.
\]
Moreover, for each $\lambda\notin\sigma_{ess}(\Ha_\mu)$, the operator $\lambda I-\Ha_\mu$ is injective and
\[
    \codim\ran(\lambda I-\Ha_\mu)
    =\sum_{\substack{\Real z>0\\\mathcal M\mu(z)=\lambda}}\operatorname{ord}_z(\mathcal M\mu-\lambda)<\infty.
\]
\end{theorem}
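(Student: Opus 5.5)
The plan is to realize $\Ha_\mu$ on $\Bloch$ as the Banach adjoint of $\BB_\mu$ acting on $A^1$, under the pairing
\[
\langle f,g\rangle=\lim_{r\to1}\int_\D f(rz)\overline{g(rz)}\,dA(z),
\]
which identifies $(A^1)^*$ with $\Bloch$ up to an isomorphism. First we check on monomials that $\BB_\mu$ is the transpose of $\Ha_\mu$ for this pairing. The pairing gives $\langle z^n,z^k\rangle=\delta_{nk}/(n+1)$, and the weights $(k+1)/(n+1)$ in \eqref{eq:Bmu-definition} are exactly what is needed for $\langle \BB_\mu z^n, z^k\rangle=\langle z^n,\Ha_\mu z^k\rangle$. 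Next we show that $\BB_\mu$ extends boundedly to $A^1$ when $\int t^{-1}d\mu<\infty$. For this we write $\BB_\mu$ as an average of weighted composition operators. Computing the kernel, one expects $\BB_\mu f(z)=\int_{(0,1]} w_t(z) f(\Psi_t(z))\,d\mu(t)$ with a weight of the form $w_t=t^{-1}$ times a derivative factor (essentially $\Ah_\mu$ composed with $f\mapsto (zf)'$ and integration). Reparametrizing by $\nu=\kappa(\mu)$ then exhibits $\{T_t\}_{t\ge0}$, $T_tf=e^{t}(\cdots)f\circ\psi_t$, as a $C_0$-semigroup on $A^1$ with generator $B_{A^1}$.

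The core step is the identity $\BB_\mu=\LL\nu(-B_{A^1})$ together with the spectral theory of $B_{A^1}$. We identify $B_{A^1}$ as a first-order differential operator $Bf=(1-z)f'+c f$ (up to the weight), and find its spectrum by solving $(\lambda-B)f=g$ explicitly with an integrating factor $(1-z)^{\cdot}$. We expect $-B_{A^1}$ to be sectorial of angle $<\pi/2$, with $\sigma(-B_{A^1})=\{\Real z\ge0\}$ after the normalization that matches $\mathcal M\mu(z)=\LL\nu(z)$ on the correct half-plane. We also expect that for $\Real z<0$ the kernel of $z+B_{A^1}$ is trivial, while for $\Real z>0$ the equation has the eigenfunction $(1-z)^{\cdot}$ in $A^1$, or the dual behaviour. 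Working on $A^1$, we need the Fredholm data: for $\Real z\neq0$ the operator $zI+B_{A^1}$ is Fredholm of index $\pm1$ (one-dimensional kernel, surjective) for $\Real z>0$, and invertible otherwise. On $\Real z=0$ it fails to have closed range, which we show with an approximate-eigenfunction argument using $(1-z)^{i s-\varepsilon}$ style functions.

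Next we apply the spectral mapping theorem for the extended Dunford--Riesz calculus $\EE$ (cited in \Cref{subsec:functional_calculus}, following \cite{AbadiasOliva-Maza2024}). This gives $\sigma(\BB_\mu)=\LL\nu(\sigma(-B))\cup\{\LL\nu(\infty)\}$, with $\LL\nu(\infty)=\nu(\{0\})=\mu(\{1\})$. The essential-spectrum version yields $\sigma_{ess}(\BB_\mu)=\{\mathcal M\mu(is)\}\cup\{\mu(\{1\})\}$. For $\lambda$ off this set, factor
\[
\lambda-\mathcal M\mu(z)=\prod_j (z-z_j)\,h(z),
\]
where the $z_j$ are the finitely many zeros in $\Real z>0$ with multiplicity. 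Finiteness follows since $\mathcal M\mu\in\EE$ has limits at $0$ and $\infty$ and $\lambda$ avoids them and the boundary values. Here $h$ is invertible in the calculus, so $\lambda-\BB_\mu=\prod_j(z_j+B)\,h(-B)$. By the index computation for $B$, this is surjective with kernel of dimension $\sum\operatorname{ord}$.

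Finally, we dualize. Since $\Ha_\mu=\BB_\mu^*$ on $\Bloch$, spectra and essential spectra coincide, and $\ker(\lambda-\BB_\mu)$ has the same dimension as $\codim\ran(\lambda-\Ha_\mu)$. Surjectivity of $\lambda-\BB_\mu$ gives injectivity of $\lambda-\Ha_\mu$, which yields \Cref{th:spectral_result_Hausdorff_Bloch}. The main obstacle is the precise generator analysis on $A^1$. We must identify $B_{A^1}$ and its domain, prove sectoriality with the exact half-plane spectrum, and establish the Fredholm and kernel structure of $z+B_{A^1}$ for $\Real z\neq 0$ in the non-reflexive space $A^1$. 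We would first try explicit resolvent formulas via integrating factors and estimate them using the Littlewood--Paley-type description of $A^1$ norms.
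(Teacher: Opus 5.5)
There is a genuine gap. Your outer architecture is the paper's: $\Ha_\mu$ on $\Bloch$ as the adjoint of $\BB_\mu$ on $A^1$, $\BB_\mu$ as a Laplace transform of a shifted negative generator, $\EE$-calculus spectral mapping, a factorization for the index, and adjoint kernel/range duality. The gap sits in the step you defer as ``the main obstacle''. That step is where the proof actually lives, and your picture of it is off.

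The operators $U(t)$ are not weighted composition operators. A direct computation gives
\[
U(t)f(z)=\frac{1-e^{-t}}{\psi_t(z)^2}\int_0^{\psi_t(z)}f(\xi)\,d\xi+\frac{e^{-t}z}{\psi_t(z)}\,f(\psi_t(z)).
\]
So the generator is the integro-differential operator $B_{A^1}f(z)=z^{-2}\int_0^z f(\xi)\,d\xi-z^{-1}f(z)+(1-z)f'(z)$, not $(1-z)f'+cf$. Its eigenfunctions are $(1+(\lambda-1)z)(1-z)^{-\lambda-1}$, which are not in general powers of $1-z$.

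What makes the analysis tractable is to push your remark that $\BB_\mu$ is $\Ah_\mu$ conjugated by $f\mapsto(zf)'$ down to the semigroup itself. The map $Jf=(zf)'$ is an isomorphism of $D^1=\{f:f'\in A^1\}$ onto $A^1$, and $U(t)=JT(t)J^{-1}$ with $T(t)f=f\circ\psi_t$. Everything then reduces to $A_{D^1}f=(1-z)f'$ on $D^1$. I use the paper's normalization $\mathcal M\mu(z)=\LL\nu(z-1)$, so the critical line is $\Real\lambda=1$. The reduction readily gives $\norm{U(t)}_{A^1}\lesssim e^{t}$. Hence $\BB_\mu$ is bounded under $\int t^{-1}\,d\mu<\infty$, $I-B_{A^1}$ is sectorial, and $\lambda I-B_{A^1}$ is invertible for $\Real\lambda>1$. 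It also gives $\ker(\lambda I-A_{D^1})=\C(1-z)^{-\lambda}$ for $\Real\lambda<1$.

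The missing ingredient is surjectivity of $\lambda I-A_{D^1}$ for $\Real\lambda<1$. This is exactly what dualizes to injectivity of $\lambda I-\Ha_\mu$ and to the formula for $\codim\ran(\lambda I-\Ha_\mu)$. So it cannot be left as an expectation hedged between ``index $\pm1$'' and ``the dual behaviour''. Integrating factors only produce the candidate $g=(1-z)^{-\lambda}\bigl(K-\int_0^z(1-\xi)^{\lambda-1}f(\xi)\,d\xi\bigr)$. Proving $g\in D^1$ for arbitrary $f\in D^1$ is the whole difficulty. Passing to derivatives only turns it into the same interior-of-spectrum problem for $(\lambda+1)I-A_{A^1}$ on $A^1$.

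The paper's idea is to split $f=\sum_k\binom{n}{k} z^k(1-z)^{n-k}f$ with $n$ large.
\begin{enumerate}[(a)]
\item Terms divisible by a high power $(1-z)^\delta$ are solved by the convergent integral $\int_0^\infty e^{-\lambda t}T(t)f\,dt$. It converges because $T(t)\bigl((1-z)^\delta h\bigr)=e^{-\delta t}(1-z)^\delta\,T(t)h$.
\item Terms with a zero of high order at $0$ are solved, with $K=0$, by $g=-\int_0^\infty\phi_t\bigl(\frac{1-\phi_t}{1-z}\bigr)^{\lambda}(f\circ\phi_t)\,dt$, where $\phi_t(z)=e^{-t}z/((e^{-t}-1)z+1)$. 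The integrand decays in $D^1$ like $e^{(\Real\lambda-1)t}$ by \Cref{lemma:dirichlet_growth_bound}.
\end{enumerate}

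Two smaller corrections. First, $I-B_{A^1}$ (your $-B$) is sectorial of angle exactly $\pi/2$, not less, since its spectrum is a closed half-plane. Second, in $\lambda-\mathcal M\mu=\prod_j(z-z_j)\,h$ the cofactor $h$ decays like $|z|^{-N}$ at infinity, so $h(-B)$ is not invertible. You need the normalized factors $(z-z_j)/(1+z)$, as in \eqref{eq:factorization_FC}, so that the cofactor is invertible in $\EE$.
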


The structure of the paper is as follows. \Cref{sec:preliminaries} introduces the function spaces, composition semigroups, and functional calculus used in the proofs. \Cref{sec:proof_theorem_1_1,sec:proof_theorem_1_2} prove the main theorems. In particular, \Cref{sec:proof_theorem_1_2} presents spectral properties of a composition semigroup on the Dirichlet spaces $D^p$. Finally, \Cref{sec:examples} gives examples involving Gamma operators and generalized Cesàro operators.

\section{Preliminaries}\label{sec:preliminaries}

In this section, we recall the function spaces, semigroup results, and sectorial functional calculus used below.

\subsection{Several spaces of analytic functions on \texorpdfstring{$\D$}{D}}\label{subsec:space_analytic_functions}
For $1\leq p<\infty$, the Hardy space $H^p=H^p(\D)$ consists of all analytic functions $f$ on $\D$ such that
\[
    \norm{f}_{H^p}=\sup_{0\leq r<1}\left(\frac{1}{2\pi}\int_0^{2\pi}|f(re^{i\theta})|^p\,d\theta\right)^{1/p}<\infty.
\]
For $1<p<\infty$, the dual space $(H^p)^*$ is anti-linearly isomorphic to $H^q$, where $1/p+1/q=1$, via the pairing
\[
    \langle f,g\rangle=\frac{1}{2\pi}\int_0^{2\pi}\overline{f(e^{i\theta})}g(e^{i\theta})\,d\theta,
    \quad f\in H^q,\ g\in H^p.
\]
More information about Hardy spaces can be found in \cite{Duren1970,Zhu2007}.

For $1\leq p<\infty$, the Bergman space $A^p=A^p(\D)$ consists of all analytic functions on $\D$ such that
\[
    \norm{f}_{A^p}=\left(\int_\D|f(z)|^p\,dA(z)\right)^{1/p}<\infty,
\]
where $dA$ is normalized area measure. Moreover, for $1<p<\infty$, the dual space $(A^p)^*$ is anti-linearly isomorphic to $A^q$, where $1/p+1/q=1$, via
\[
    \langle f,g\rangle=\int_\D\overline{f(z)}g(z)\,dA(z), \quad f\in A^q,\ g\in A^p.
\]
See \cite{DurenSchuster2004,Axler1988} for information about Bergman spaces.

For $1\leq p<\infty$, the Dirichlet space $D^p=D^p(\D)$ consists of all analytic functions whose derivative belongs to $A^p$, with norm
\[
    \norm{f}_{D^p}=\left(|f(0)|^p+\norm{f'}_{A^p}^p\right)^{1/p}.
\]
In both $A^p$ and $D^p$, point evaluations are bounded. In particular,
\begin{equation}\label{eq:point_evaluation_Bergman}
    |f(z)|\leq(1-|z|^2)^{-2/p}\norm{f}_{A^p}, \quad f\in A^p,\ z\in\D,
\end{equation}
see \cite[p. 79]{DurenSchuster2004}. If $f\in D^p$ and $p<2$, then
\begin{equation}\label{eq:point_evaluation_Dirichlet_case_1}
    |f(z)|\lesssim(1-|z|^2)^{1-2/p}\norm{f}_{D^p}, \quad z\in\D,
\end{equation}
which follows by integrating the estimate for $f'$. For $p=2$, we have
\begin{equation}\label{eq:point_evaluation_Dirichlet_case_2}
    |f(z)|\lesssim\left(1+\sqrt{\log\frac{1}{1-|z|^2}}\right)\norm{f}_{D^2}, \quad z\in\D,
\end{equation}
see \cite[Theorem 1.2.1]{El-FallahKellayMashreghiRansford2014}. For $p>2$, $D^p\subset H^\infty(\D)$, where $H^\infty(\D)$ is the space of bounded analytic functions on $\D$, and $\norm{fg}_{D^p}\lesssim\norm{f}_{D^p}\norm{g}_{D^p}$; see \cite[Theorem 4.2]{Wu1999}. In that case,
\begin{equation}\label{eq:point_evaluation_Dirichlet_case_3}
    |f(z)|\lesssim\norm{f}_{D^p}, \quad f\in D^p,\ z\in\D.
\end{equation}

The spaces $D^p$ and $A^p$ are isomorphic under the bounded linear transformation
\begin{equation}\label{eq:isomorphism_A1_D1}
    J:D^p\longrightarrow A^p, \qquad Jf(z)=(zf(z))',
\end{equation}
with inverse
\[
    J^{-1}:A^p\longrightarrow D^p, \qquad
    J^{-1}f(z)=\frac{1}{z}\int_0^z f(\xi)\,d\xi=\int_0^1 f(tz)\,dt.
\]
Indeed,
\[
    \norm{Jf}_{A^p}\leq\norm{f}_{A^p}+\norm{zf'}_{A^p}\lesssim\norm{f}_{D^p},
\]
where we use $\norm{f}_{A^p}\lesssim\norm{f}_{D^p}$, see \cite[Proposition 1.11]{HedenmalmKorenblumZhu2000}. Conversely, radial dilations are contractions on $A^p$, so $\norm{J^{-1}f}_{A^p}\leq\norm{f}_{A^p}$. Since
\[
    (J^{-1}f)'(z)=\frac{f(z)-J^{-1}f(z)}{z},
\]
boundedness of the backward shift on $A^p$ gives $\norm{J^{-1}f}_{D^p}\lesssim\norm{f}_{A^p}$, see \cite[Lemma 4.26]{Zhu2007}.

A function $f\in Hol(\D)$ belongs to BMOA if
\[
    \norm{f}_{BMOA}=|f(0)|+\sup_{a\in\D}\norm{f\circ\varphi_a-f(a)}_{H^2}<\infty,
\]
where $\varphi_a(z)=(a-z)/(1-\overline a z)$. Fefferman's duality theorem states that $(H^1)^*$ is anti-linearly isomorphic to BMOA via the pairing
\[
    \langle f,g\rangle=\lim_{r\to1^-}\frac{1}{2\pi}\int_0^{2\pi}\overline{f(e^{i\theta})}g(re^{i\theta})\,d\theta,
    \quad f\in BMOA,\ g\in H^1.
\]
If $f(z)=\sum_{n=0}^\infty a_nz^n$ and $g(z)=\sum_{n=0}^\infty b_nz^n$, this becomes
\[
    \langle f,g\rangle=\lim_{r\to1^-}\sum_{n=0}^\infty\overline{a_n}b_nr^n.
\]
We refer to \cite{Girela2001} for the theory of BMOA.

Lastly, the Bloch space $\Bloch$ consists of all $f\in Hol(\D)$ such that
\[
    \norm{f}_{\Bloch}=|f(0)|+\sup_{z\in\D}(1-|z|^2)|f'(z)|<\infty.
\]
The space $(A^1)^*$ is anti-linearly isomorphic to $\Bloch$ via the pairing
\[
    \langle f,g\rangle=\lim_{r\to1^-}\int_{r\D}\overline{f(z)}g(z)\,dA(z), \quad f\in\Bloch,\ g\in A^1.
\]
In terms of Taylor coefficients,
\[
    \langle f,g\rangle=\lim_{r\to1^-}\sum_{n=0}^\infty\frac{\overline{a_n}b_n}{n+1}r^{2n+2},
\]
see \cite[Lemma 2.4]{Axler1988} and \cite{Zhu1993}.

\subsection{Strongly continuous semigroups}\label{subsec:composition_semigroups}

A family $(\TT(t))_{t\geq0}$ of bounded operators on a Banach space $X$ is a strongly continuous semigroup if $\TT(0)=I$, $\TT(t+s)=\TT(t)\TT(s)$, and $\TT(t)x\to x$ as $t\to0^+$ for every $x\in X$. Its infinitesimal generator is
\[
    Ax=\lim_{t\to0^+}\frac{\TT(t)x-x}{t},
\]
with domain $D(A)$ consisting of the vectors for which this limit exists. The operator $A$ is closed and densely defined. Its growth bound is
\[
    \omega_0=\lim_{t\to\infty}\frac{\log\norm{\TT(t)}}{t}.
\]
For every $\omega>\omega_0$, there is a constant $M$ such that $\norm{\TT(t)}\leq Me^{\omega t}$ for all $t\geq0$. If $\Real\lambda>\omega_0$, then $\lambda$ is in the resolvent set of $A$, and
\[
    (\lambda I-A)^{-1}x=\int_0^\infty e^{-\lambda t}\TT(t)x\,dt.
\]

Recall the two semiflows
\begin{equation}\label{eq:semigroups}
    \phi_t(z)=\frac{e^{-t}z}{(e^{-t}-1)z+1}, \qquad \psi_t(z)=e^{-t}z+1-e^{-t}.
\end{equation}
The two operator families studied here are
\begin{equation}\label{eq:semigroup_T}
    T(t)f(z) := f(\psi_t(z)), \quad z\in\D,\ t\geq0,
\end{equation}
and
\begin{equation}\label{eq:semigroup_U}
    U(t)f(z):=\frac{d}{dz}\left(\frac{z}{\psi_t(z)}\int_0^{\psi_t(z)}f(\xi)\,d\xi\right), \qquad z \in \D, \, t \geq 0,
\end{equation}
where the quotient is understood by analytic continuation
at the zeros of \(\psi_t\).
The semiflow identity for $\psi_t$ gives the semigroup law for $T(t)$, and a direct calculation gives $U(t+s)=U(t)U(s)$ on $Hol(\D)$. The relation $U(t)=JT(t)J^{-1}$ will only be used between spaces on which $J$ is a bounded isomorphism; see \Cref{remark:similarity}.

\begin{proposition}\label{prop:condition_semigroup_is_strongly}
Let $X$ be a Banach space of analytic functions on $\D$ in which the polynomials are dense, and let $V(t)$ denote either $T(t)$ or $U(t)$. Suppose that each $V(t)$ acts boundedly on $X$ and that
\[
    \sup_{0\leq t<\delta}\norm{V(t)}<\infty
\]
for some $\delta>0$. Then $(V(t))_{t\geq0}$ is a strongly continuous semigroup on $X$.
\end{proposition}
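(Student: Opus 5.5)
The plan is the standard density-plus-uniform-boundedness argument. Since $V(0)=I$ and the semigroup law $V(t+s)=V(t)V(s)$ holds on $Hol(\D)$, hence on $X$, it remains to prove $V(t)f\to f$ in $X$ as $t\to0^+$ for every $f\in X$. Put $M=\sup_{0\le t<\delta}\norm{V(t)}$. For $f\in X$ and a polynomial $p$ with $\norm{f-p}_X<\varepsilon$, we have for $0\le t<\delta$
\[
\norm{V(t)f-f}_X\le \norm{V(t)(f-p)}_X+\norm{V(t)p-p}_X+\norm{p-f}_X\le (M+1)\varepsilon+\norm{V(t)p-p}_X .
\]
Thus it suffices to show $V(t)p\to p$ in $X$ for each polynomial $p$, and by linearity for each monomial $z^n$.

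For $T(t)$ we have $T(t)z^n=\psi_t(z)^n=(e^{-t}z+1-e^{-t})^n$. Expanding binomially,
\[
T(t)z^n=\sum_{k=0}^n\binom nk e^{-kt}(1-e^{-t})^{n-k}z^k,
\]
a polynomial of degree at most $n$ whose coefficients converge to those of $z^n$ as $t\to0^+$. Since $X$ contains the polynomials, the finite-dimensional space $\mathcal P_n$ of polynomials of degree $\le n$ sits inside $X$. On $\mathcal P_n$ the $X$-norm is equivalent to any coefficient norm. Hence $T(t)z^n\to z^n$ in $X$.

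For $U(t)$ we compute $U(t)z^n$ directly. We have $\int_0^{\psi_t(z)}\xi^n\,d\xi=\psi_t(z)^{n+1}/(n+1)$, so
\[
U(t)z^n=\frac{1}{n+1}\frac{d}{dz}\bigl(z\,\psi_t(z)^n\bigr).
\]
This is again a polynomial of degree at most $n$, with coefficients depending continuously on $t$. At $t=0$ it equals $\frac{1}{n+1}(z^{n+1})'=z^n$. The same finite-dimensional argument gives $U(t)z^n\to z^n$ in $X$.

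The only point that needs any care is the reduction to $\mathcal P_n$. We need $V(t)$ to map polynomials to polynomials of bounded degree, which the explicit formulas show. We also use that norms on finite-dimensional spaces are equivalent, so no extra continuity of point evaluations is needed. The quotient in the definition of $U(t)$ causes no difficulty here, because the division by $\psi_t$ cancels against the power $\psi_t^{n+1}$. With this, the $\varepsilon$-argument above completes the proof.
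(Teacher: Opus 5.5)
Your proposal is correct and follows essentially the same route as the paper: the paper cites \cite[Proposition I.5.3]{EngelNagel2000} to reduce, via local uniform boundedness, to strong continuity on the polynomials, whereas you write out the corresponding $\varepsilon$-argument. The rest is identical: the explicit bounded-degree formulas for $V(t)z^n$ with coefficients converging to those of $z^n$, and equivalence of norms on finite-dimensional polynomial spaces.
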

\begin{proof}
By \cite[Proposition I.5.3]{EngelNagel2000}, local uniform boundedness reduces the proof to strong continuity on a dense subset. For each $k\geq0$,
\[
    T(t)(z^k)=\psi_t(z)^k, \qquad
    U(t)(z^k)=\frac{1}{k+1}\frac{d}{dz}\bigl(z\psi_t(z)^k\bigr).
\]
As $t\to0^+$, the coefficients of these polynomials converge to those of $z^k$, while their degrees remain bounded. Since all norms on a finite-dimensional polynomial space are equivalent, $V(t)z^k\to z^k$ in $X$. The same follows for every polynomial, and the result follows by density.
\end{proof}
Under suitable integrability conditions on a positive measure
\(\mu\) on \((0,1]\), we will prove that the operators
\begin{equation}\label{eq:integral_T2}
	T_\mu(f)(z):=\int_{(0,1]}T(-\log t)f(z)\,d\mu(t)
\end{equation}
and
\begin{equation}\label{eq:integral_U}
	U_\mu(f)(z):=\int_{(0,1]}U(-\log t)f(z)\,d\mu(t)
\end{equation}
are bounded on \(H^1\) and \(A^1\), respectively.
Moreover, they agree on polynomials with the matrix operators
\(\Ah_\mu\) and \(\BB_\mu\), defined in~\eqref{quasiHaus}
and~\eqref{eq:Bmu-definition}, respectively.

The following remark records a connection between the two semigroups; see \cite[Sections I.5.10 and II.2.1]{EngelNagel2000}. Recall that in the Introduction we defined the spectrum \(\sigma(A)\) and the essential spectrum \(\sigma_{ess}(A)\) of a closed operator \(A\), and the index \(\operatorname{Ind}(A)\) if \(A\) is Fredholm. We use the symbol \(\sigma_{point}(A)\) to denote the point spectrum of \(A\).
\begin{remark}\label{remark:similarity}
Let $X$ and $Y$ be Banach spaces of analytic functions such that $J:X\to Y$ is an isomorphism. If $(T(t))_{t\geq0}$ is a strongly continuous semigroup on $X$, then $(U(t))_{t\geq0}$ is a similar strongly continuous semigroup on $Y$, with the same growth bound:
\[
    U(t)=JT(t)J^{-1}, \quad t\geq0.
\]
If $A$ is the generator of $T(t)$, then the generator of $U(t)$ is
\[
    B=JAJ^{-1}, \qquad D(B)=J(D(A)).
\]
In particular,
\[
    \sigma(A)=\sigma(B), \quad \sigma_{point}(A)=\sigma_{point}(B), \quad
    \sigma_{ess}(A)=\sigma_{ess}(B).
\]
Indeed, similarity preserves kernels and ranges, and
\[
    \operatorname{Ind}(\lambda I-B)=\operatorname{Ind}(\lambda I-A)
\]
whenever these operators are Fredholm. The resolvents satisfy $(\lambda I-B)^{-1}=J(\lambda I-A)^{-1}J^{-1}$.
\end{remark}

The following results for $T(t)$ follow from the theory of composition semigroups on $H^p$ developed in \cite{BerksonPorta1978}; see also \cite[Example 5.5]{Siskakis1998}. From now on, complex powers of \(1-z\), where \(z\in \D\), are defined using the principal logarithm.
\begin{lemma}\label{lemma:semigroup_T_Hardy}
For $1\leq p<\infty$, $(T(t))_{t\geq0}$ is a strongly continuous semigroup on $H^p$, with $\norm{T(t)}\lesssim e^{t/p}$. Its infinitesimal generator is
\begin{equation}\label{eq:generator_A_Hardy}
    A_{H^p}f(z)=(1-z)f'(z), \quad z\in\D,
\end{equation}
with domain $D(A_{H^p})=\{f\in H^p:(1-z)f'\in H^p\}$.
\end{lemma}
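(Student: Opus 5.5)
The plan is to treat $\psi_t$ as a composition semiflow with Denjoy--Wolff point $1$ and generator $G(z)=1-z$. Three things need to be checked: boundedness with the growth estimate, strong continuity via \Cref{prop:condition_semigroup_is_strongly}, and identification of the generator through the standard Berkson--Porta computation.

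\emph{Norm bound.} Write $w=\psi_t(z)$. Since $\psi_t$ is an affine self-map of $\D$, the norm of a composition operator with analytic symbol on $H^p$ can be bounded by a Littlewood subordination argument. Set $\varphi=\psi_t$, so that $\varphi(0)=1-e^{-t}$. The standard estimate
\[
\norm{C_\varphi}_{H^p}^p\le \frac{1+|\varphi(0)|}{1-|\varphi(0)|}
\]
gives
\[
\norm{T(t)}^p\le \frac{2-e^{-t}}{e^{-t}}\le 2e^{t},
\]
and therefore $\norm{T(t)}\le 2^{1/p}e^{t/p}$. This is uniformly bounded for $t\in[0,1)$. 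Polynomials are dense in $H^p$ for $1\le p<\infty$, so \Cref{prop:condition_semigroup_is_strongly} yields strong continuity.

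\emph{Generator on polynomials.} For $f$ analytic near $\overline{\D}$ we have
\[
\frac{\partial}{\partial t}\psi_t(z)\Big|_{t=0}=1-z .
\]
Hence $\frac{T(t)f-f}{t}\to (1-z)f'$ pointwise, and for polynomials this convergence also holds in $H^p$ by finite dimensionality. Write $A$ for the true generator.

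\emph{Generator in general.} I would show $D(A)=\{f\in H^p:(1-z)f'\in H^p\}$ with $Af=(1-z)f'$. The inclusion ``$\subseteq$'' is easy. If $f\in D(A)$, the limit defining $Af$ exists in $H^p$, hence locally uniformly on $\D$ because point evaluations are bounded. Pointwise it equals $\frac{d}{dt}f(\psi_t(z))|_{t=0}=(1-z)f'(z)$, so $(1-z)f'=Af\in H^p$.

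For the converse, let $D$ be the maximal operator $Df=(1-z)f'$ on that domain; it is an extension of $A$. Take $\lambda$ real with $\lambda>1/p\ge\omega_0$, so that $\lambda\in\rho(A)$. It suffices to show that $\lambda-D$ is injective: then $A\subseteq D$ with $\lambda-A$ surjective forces $A=D$. Injectivity amounts to solving $(1-z)f'=\lambda f$, whose solutions are $f=c(1-z)^{-\lambda}$. This function lies in $H^p$ only if $\lambda p<1$, so $f\notin H^p$ for $\lambda>1/p$ unless $c=0$. That completes the identification.

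I expect the main obstacle to be the norm estimate with the precise exponent $1/p$. That is the step where the subordination bound must be applied correctly. If that bound is not available in the needed form, I would instead compute it directly via the change of variables $w=\psi_t(z)$ on circles, using that $\psi_t(\D)$ is the disc of radius $e^{-t}$ centered at $1-e^{-t}$, tangent to the unit circle at $1$.
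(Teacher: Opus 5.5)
Your proof is correct, but it takes a different route from the paper. The paper gives no argument of its own. It invokes the general Berkson--Porta theory of composition semigroups on $H^p$ (see also Siskakis' survey). That theory says that for any holomorphic semiflow with generator $G$ (here $G(z)=1-z$), the induced composition semigroup is strongly continuous on $H^p$, and its generator is $f\mapsto Gf'$ on the maximal domain.

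You instead give a self-contained proof specific to $\psi_t$.
\begin{enumerate}[(i)]
    \item The growth bound comes from Littlewood's subordination estimate $\norm{C_\varphi}^p\le(1+|\varphi(0)|)/(1-|\varphi(0)|)$ with $\varphi(0)=1-e^{-t}$. This gives $\norm{T(t)}\le 2^{1/p}e^{t/p}$ and hence $\omega_0\le 1/p$.
    \item Strong continuity then follows from \Cref{prop:condition_semigroup_is_strongly}.
    \item The domain is identified by the extension argument: $A\subseteq D$, $\lambda-A$ is surjective for real $\lambda>1/p$, and $\lambda-D$ is injective. Injectivity holds because every solution of $(1-z)f'=\lambda f$ is a multiple of $(1-z)^{-\lambda}$, which is not in $H^p$ when $\lambda>1/p$.
\end{enumerate}
All of these steps check out.

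Your route gains transparency and an explicit constant. Its cost is that the domain step relies on solving the eigen-equation explicitly, which is special to this flow; it is essentially the computation later used in \Cref{lemma:spectrum_A_Hardy}. The general theory avoids this. If $(1-z)f'\in H^p$, then $\frac{d}{ds}f(\psi_s(z))=(1-\psi_s(z))f'(\psi_s(z))$ gives $T(t)f-f=\int_0^t T(s)\bigl((1-z)f'\bigr)\,ds$ as a Bochner integral in $H^p$. Dividing by $t$ and using strong continuity shows $f\in D(A)$ directly, and this works for any semiflow.
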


\begin{lemma}\label{lemma:spectrum_A_Hardy}
For $1\leq p<\infty$,
\[
    \sigma_{point}(A_{H^p})=\{\lambda\in\C:\Real\lambda<1/p\}, \qquad
    \sigma(A_{H^p})=\{\lambda\in\C:\Real\lambda\leq1/p\}.
\]
The eigenspace corresponding to $\lambda$ is one-dimensional and generated by $f_\lambda(z)=(1-z)^{-\lambda}$.
\end{lemma}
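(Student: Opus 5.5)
The plan is to establish three facts in order: the point spectrum, the spectrum via the resolvent, and the eigenspace dimension.

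\emph{Point spectrum and eigenspaces.} Fix $\lambda\in\C$ and solve the eigenvalue equation $(1-z)f'(z)=\lambda f(z)$ on $\D$. This is a first-order linear ODE, so every solution is a multiple of $f_\lambda(z)=(1-z)^{-\lambda}$. Hence each eigenspace has dimension at most one, and $\lambda$ is an eigenvalue precisely when $f_\lambda\in D(A_{H^p})$. In that case $(1-z)f_\lambda'=\lambda f_\lambda$ automatically, so membership reduces to checking $f_\lambda\in H^p$.

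Writing $\lambda=a+ib$, we have $|(1-z)^{-\lambda}|=|1-z|^{-a}e^{b\arg(1-z)}$. The factor $e^{b\arg(1-z)}$ is bounded above and below on $\D$, because $|\arg(1-z)|<\pi/2$ there. Therefore $f_\lambda\in H^p$ if and only if $|1-e^{i\theta}|^{-ap}$ is integrable near $\theta=0$, that is, if and only if $ap<1$. This gives $\sigma_{point}(A_{H^p})=\{\Real\lambda<1/p\}$ and shows that the eigenspaces are generated by $f_\lambda$.

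\emph{The spectrum.} Since $\sigma(A_{H^p})$ is closed and contains the point spectrum, it contains the closed half-plane $\{\Real\lambda\le 1/p\}$. For the reverse inclusion, Lemma~\ref{lemma:semigroup_T_Hardy} gives $\norm{T(t)}\lesssim e^{t/p}$, so the growth bound satisfies $\omega_0\le 1/p$. By the Laplace-transform resolvent formula recalled in \Cref{subsec:composition_semigroups}, every $\lambda$ with $\Real\lambda>1/p$ lies in $\rho(A_{H^p})$. This yields $\sigma(A_{H^p})=\{\Real\lambda\le1/p\}$.

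The main point needing care is whether the boundary line $\Real\lambda=1/p$ could contain eigenvalues; this is excluded because $|1-e^{i\theta}|^{-1}$ is not integrable. A secondary point is the estimate on $\norm{T(t)}$. It is cited from Lemma~\ref{lemma:semigroup_T_Hardy}, but it can also be checked directly via Littlewood subordination and the standard composition operator bound $\norm{C_\varphi}^p\le \frac{1+|\varphi(0)|}{1-|\varphi(0)|}$, here applied to the automorphism-free affine map $\psi_t$ with $\psi_t(0)=1-e^{-t}$. Indeed, $\frac{1+|\psi_t(0)|}{1-|\psi_t(0)|}=\frac{2-e^{-t}}{e^{-t}}\le 2e^{t}$, which gives $\norm{T(t)}\lesssim e^{t/p}$.
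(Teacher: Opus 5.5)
Your argument is correct and is the standard one the paper relies on. The paper does not prove this lemma itself; it cites Berkson--Porta and Siskakis. Its Dirichlet-space analogue (\Cref{lemma:point_spectrum_A_Dirichlet}) and the remark after this lemma use exactly your two ingredients: the ODE computation for the eigenfunctions, and the bound $\omega_0\le 1/p$ coming from $\norm{T(t)}\lesssim e^{t/p}$.

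One step deserves an extra line: the implication from boundary integrability to $f_\lambda\in H^p$ when $0<\Real\lambda<1/p$. It follows from $|1-re^{i\theta}|\ge |1-e^{i\theta}|/\sqrt{2}$ for $r\ge 1/2$, which makes the integral means uniformly bounded.
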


\begin{remark}
The norm estimate gives $\omega_0\leq1/p$, while the spectral bound satisfies
\[
    1/p=\sup\{\Real\lambda:\lambda\in\sigma(A_{H^p})\}\leq\omega_0
\]
by \cite[Proposition IV.2.2]{EngelNagel2000}. Hence,  $\omega_0=1/p$.
\end{remark}

We also recall the corresponding results on the Bergman spaces; see \cite[Section 4]{Siskakis1987}. We include the generator and its spectral properties for later use.
\begin{lemma}\label{lemma:semigroup_T_Bergman}
For $1\leq p<\infty$, $(T(t))_{t\geq0}$ is a strongly continuous semigroup on $A^p$, with $\norm{T(t)}\leq e^{2t/p}$. Its infinitesimal generator is
\begin{equation}\label{eq:generator_A_Bergman}
    A_{A^p}f(z)=(1-z)f'(z), \quad z\in\D,
\end{equation}
with domain $D(A_{A^p})=\{f\in A^p:(1-z)f'\in A^p\}$.
\end{lemma}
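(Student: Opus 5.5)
The goal is Lemma~\ref{lemma:semigroup_T_Bergman}: for $1\le p<\infty$, $(T(t))_{t\ge0}$ is a strongly continuous semigroup on $A^p$ with $\norm{T(t)}\le e^{2t/p}$, and its generator is $A_{A^p}f=(1-z)f'$ on the maximal domain. The plan has three steps: a norm bound for each $T(t)$ by a change of variables, strong continuity from Proposition~\ref{prop:condition_semigroup_is_strongly}, and identification of the generator by a pointwise computation followed by a resolvent argument.

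\emph{Norm bound.} Fix $t\ge0$ and write $s=e^{-t}\in(0,1]$, so $\psi_t(z)=sz+1-s$. This is an affine injective self-map of $\D$ with constant Jacobian $|\psi_t'|^2=s^2$. Changing variables $w=\psi_t(z)$ gives
\[
\norm{T(t)f}_{A^p}^p=\int_\D|f(\psi_t(z))|^p\,dA(z)=s^{-2}\int_{\psi_t(\D)}|f(w)|^p\,dA(w)\le e^{2t}\norm{f}_{A^p}^p .
\]
Hence $\norm{T(t)}\le e^{2t/p}$. In particular $\sup_{0\le t<1}\norm{T(t)}<\infty$.

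\emph{Strong continuity.} Polynomials are dense in $A^p$, and $T(t+s)=T(t)T(s)$ follows from the semiflow identity. Proposition~\ref{prop:condition_semigroup_is_strongly} therefore gives that $(T(t))_{t\ge0}$ is strongly continuous on $A^p$.

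\emph{Generator: pointwise formula and one inclusion.} Since $\partial_t\psi_t(z)=e^{-t}(1-z)=1-\psi_t(z)$, for every $f\in Hol(\D)$ we get
\[
\frac{T(t)f(z)-f(z)}{t}\longrightarrow (1-z)f'(z)\qquad (t\to0^+),
\]
locally uniformly in $z$. Convergence in $A^p$ implies locally uniform convergence, because point evaluations are bounded by \eqref{eq:point_evaluation_Bergman}. So for $f\in D(A)$ we must have $Af=(1-z)f'$, which gives $D(A)\subseteq\{f\in A^p:(1-z)f'\in A^p\}=:D_{\max}$. Write $\Gamma f=(1-z)f'$ on $D_{\max}$; then $A\subseteq\Gamma$.

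\emph{Generator: reverse inclusion (the main obstacle).} Take $\lambda>2/p$. Then $\lambda\in\rho(A)$ by the growth bound, so $\lambda I-A$ is onto $A^p$. It suffices to show that $\lambda I-\Gamma$ is injective on $D_{\max}$. Indeed, if $f\in D_{\max}$, choose $g\in D(A)$ with $(\lambda I-A)g=(\lambda I-\Gamma)f$; since $A\subseteq\Gamma$, injectivity gives $f=g\in D(A)$. To prove injectivity, note that $(1-z)h'=\lambda h$ forces $h=c(1-z)^{-\lambda}$. Along the radius toward $1$, $|1-z|^{-\lambda p}$ is not area-integrable near $1$ when $\lambda p\ge2$. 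Hence $h\in A^p$ forces $c=0$, and therefore $D(A)=D_{\max}$.
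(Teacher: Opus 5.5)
Your proof is correct. The paper does not prove this lemma; it recalls it from \cite[Section 4]{Siskakis1987}.

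Your norm bound and strong-continuity steps are the technique the paper uses for the Dirichlet analogue in \Cref{lemma:semigroup_T_Dirichlet}: a change of variables in the area integral, followed by \Cref{prop:condition_semigroup_is_strongly}. In that lemma the paper again takes the generator from the literature \cite[Theorem 2.1]{Gallardo-GutierrezSiskakisYakubovich2023}.

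Your identification of the domain uses $A\subseteq\Gamma$, $\lambda\in\rho(A)$ for $\lambda>2/p$, and injectivity of $\lambda I-\Gamma$ on $D_{\max}$. This is a valid self-contained replacement for the citation. Its cost is that it relies on the explicit eigenfunctions $(1-z)^{-\lambda}$ and on the fact that they leave $A^p$ exactly when $\lambda p\ge 2$, so it is tied to this particular semiflow.

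That non-integrability fact should be justified two-dimensionally, not ``along the radius''. The disc $\D$ contains the sector $\{1-\rho e^{i\theta}:0<\rho<1,\ |\theta|\le\pi/3\}$, so $\int_\D|1-z|^{-\lambda p}\,dA(z)\ge\frac{2}{3}\int_0^1\rho^{1-\lambda p}\,d\rho=\infty$.

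The general result behind the citation also has a more direct proof that needs no spectral information. For $f\in D_{\max}$ one has $\partial_s f(\psi_s(z))=(1-\psi_s(z))f'(\psi_s(z))=(T(s)\Gamma f)(z)$. Hence $T(t)f-f=\int_0^t T(s)\Gamma f\,ds$ pointwise, and therefore also as a Bochner integral in $A^p$, because point evaluations are bounded. Strong continuity then gives $(T(t)f-f)/t\to\Gamma f$ in $A^p$. This version works for any semiflow, whereas yours needs the eigenfunctions.
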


\begin{lemma}\label{lemma:spectrum_A_Bergman}
For $1\leq p<\infty$,
\[
    \sigma_{point}(A_{A^p})=\{\lambda\in\C:\Real\lambda<2/p\}, \qquad
    \sigma(A_{A^p})=\{\lambda\in\C:\Real\lambda\leq2/p\}.
\]
Each eigenspace is one-dimensional and generated by $f_\lambda(z)=(1-z)^{-\lambda}$.
\end{lemma}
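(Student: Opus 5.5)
The statement to prove is Lemma~\ref{lemma:spectrum_A_Bergman}. I would follow the Hardy-space argument of Lemma~\ref{lemma:spectrum_A_Hardy}, replacing the Hardy growth bound by the Bergman one.

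\emph{Point spectrum.} Solving $(1-z)f'(z)=\lambda f(z)$ on $\D$ shows that every solution is a constant multiple of $f_\lambda(z)=(1-z)^{-\lambda}$. Hence each eigenspace is at most one-dimensional, and it is nontrivial exactly when $f_\lambda\in A^p$. Since $|(1-z)^{-\lambda}|\asymp|1-z|^{-\Real\lambda}$ on $\D$, this reduces to the convergence of $\int_\D|1-z|^{-p\Real\lambda}\,dA(z)$. In polar coordinates centred at $1$ this integral is comparable to $\int_0^2 r^{1-p\Real\lambda}\,dr$, which is finite iff $\Real\lambda<2/p$. The same computation gives $(1-z)f_\lambda'=\lambda f_\lambda\in A^p$, so $f_\lambda\in D(A_{A^p})$. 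Therefore $\sigma_{point}(A_{A^p})=\{\Real\lambda<2/p\}$.

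\emph{Spectrum.} The spectrum is closed and contains the point spectrum, so $\{\Real\lambda\le 2/p\}\subseteq\sigma(A_{A^p})$. For the reverse inclusion I would use the bound $\norm{T(t)}\le e^{2t/p}$ from Lemma~\ref{lemma:semigroup_T_Bergman}. It gives $\omega_0\le 2/p$, and then every $\lambda$ with $\Real\lambda>2/p$ lies in $\rho(A_{A^p})$, with resolvent given by the Laplace integral of the semigroup (Section~\ref{subsec:composition_semigroups}). Hence $\sigma(A_{A^p})=\{\Real\lambda\le 2/p\}$.

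\emph{Main point.} The only substantive input is the growth estimate $\norm{T(t)}\le e^{2t/p}$ on $A^p$. If one wanted to verify it rather than cite \cite{Siskakis1987}, the plan would be as follows. Change variables $w=\psi_t(z)$, which has Jacobian $e^{-2t}$ and maps $\D$ into $\D$, to get $\norm{f\circ\psi_t}_{A^p}^p\le e^{2t}\norm{f}_{A^p}^p$; taking $p$-th roots gives the bound. Everything else is the elementary integrability computation above.
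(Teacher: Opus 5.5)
Your proposal is correct and is essentially the standard argument behind this lemma, which the paper does not reprove but cites from \cite[Section 4]{Siskakis1987}: the ODE forces eigenvectors to be multiples of $(1-z)^{-\lambda}$, the integrability test gives exactly $\Real\lambda<2/p$, closedness of the spectrum gives $\{\Real\lambda\le 2/p\}\subseteq\sigma(A_{A^p})$, and your change of variables $w=\psi_t(z)$ gives $\norm{T(t)}\le e^{2t/p}$, placing $\{\Real\lambda>2/p\}$ in the resolvent set. This is the same reasoning the paper uses in the remark following the lemma and in its proof of the Dirichlet-space analogue (Lemma~\ref{lemma:point_spectrum_A_Dirichlet}).
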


\begin{remark}
The norm estimate gives $\omega_0\leq2/p$, while \cite[Proposition IV.2.2]{EngelNagel2000} and \Cref{lemma:spectrum_A_Bergman} give the reverse inequality. Hence, $\omega_0=2/p$.
\end{remark}

\subsection{Functional calculus for sectorial operators}\label{subsec:functional_calculus}

We recall the part of the functional calculus for sectorial operators needed here; see \cite{Haase2006}. Its role is to identify a semigroup average with the Laplace transform of the measure evaluated at the negative generator. Spectral mapping then gives the spectrum of the average.

For each \(\omega\in (0,\pi)\), put 
\[
S_\omega := \{z \in \C \setminus\{0\} \, :\, |\operatorname{Arg} z| < \omega\},
\]
where \(\operatorname{Arg}\) denotes the principal argument in \((-\pi,\pi]\).
A closed operator $A$ on a Banach space \(X\) is sectorial of angle $\frac{\pi}{2}$ if $\sigma(A)\subseteq\overline{S_{\frac{\pi}{2}}}$ and, for every $\frac{\pi}{2}<\omega<\pi$,
\[
    \sup_{\lambda\notin\overline{S_{\omega}}}\norm{\lambda(\lambda I-A)^{-1}}<\infty.
\]
Let \(H_0^\infty(S_\omega)\) denote the class of bounded holomorphic functions \(f\) on \(S_\omega\) that decay polynomially at both \(0\) and \(\infty\), meaning that there exist \(\alpha,\beta>0\) such that
\[
|f(z)|=O(|z|^\alpha)\quad\text{as }z\to0,
\qquad
|f(z)|=O(|z|^{-\beta})\quad\text{as }z\to\infty,
\qquad z\in S_\omega.
\]
Then, the extended Dunford--Riesz class \(\EE\) is defined as
\[
\EE
:=
\bigcup_{\frac{\pi}{2}<\omega<\pi}
\left\{
a+\frac{b}{1+(\cdot)}+h
\,:\,
a,b\in\C,\ h\in H_0^\infty(S_\omega)
\right\},
\]
see \cite[p.~28]{Haase2006}. Equivalently, \(f\in\EE\) if and only if, for some
\(\omega\in(\pi/2,\pi)\), \(f\in H^\infty(S_\omega)\) and \(f\) has finite
polynomial limits at \(0\) and \(\infty\).

Fix \(f\in \EE\) and put \(l_0:= f(0)\), \(l_\infty = f(\infty)\), so
\[
h(z) :=f(z)-l_\infty-\frac{l_0-l_\infty}{1+z} \in H_0^\infty (S_\omega)
\]
for some \(\omega \in (\pi/2,\pi)\). Then \(f(A)\) is defined as
\[
f(A):= 
l_\infty I + (l_0-l_\infty)(I+A)^{-1} + \frac{1}{2\pi i} \int_{\partial S_\varphi} h(z)(z-A)^{-1} \, dz,
\]
where the above integral is independent of the choice of \(\varphi\in (\pi/2,\omega)\), and where \(\partial S_\varphi\) is positively oriented with respect to \(S_\varphi\). It can be shown that \(f(A)\) is a bounded operator on \(X\) and that the mapping from \(\EE\) to the algebra of bounded operators on \(X\) is an algebraic homomorphism, called the \(\EE\)-calculus of \(A\).

We are particularly interested in the spectral mapping theorems of the \(\EE\)-calculus. First, the extended spectrum is defined as $\tilde\sigma(T):=\sigma(T)$ if $T$ is bounded and $\tilde\sigma(T):=\sigma(T)\cup\{\infty\}$ otherwise. Similarly, the extended essential spectrum  is defined as $\tilde\sigma_{ess}(T):=\sigma_{ess}(T)$ if $D(T)$ has finite codimension, and $\tilde\sigma_{ess}(T):=\sigma_{ess}(T)\cup\{\infty\}$ otherwise.
Then the spectral mapping theorems give
\begin{equation}\label{eq:SMT}
    \sigma(f(A))=f(\tilde\sigma(A)), \qquad
    \sigma_{ess}(f(A))=f(\tilde\sigma_{ess}(A)),
\end{equation}
see \cite[Theorem 2.7.8]{Haase2006} and \cite[Theorem 5.4]{Oliva-Maza2023}.

We also use a factorization that makes the Fredholm consequences of \eqref{eq:SMT} explicit. Suppose $\lambda\in \sigma(f(A)) \setminus \sigma_{ess}(f(A))$ and put $M_\lambda=\{z\in\sigma(A):f(z)=\lambda\}$. Then \(M_\lambda\neq \varnothing\) by the spectral mapping theorem and $M_\lambda\cap\sigma_{ess}(A)=\varnothing$. Indeed, a point $z$ in this intersection would give $\lambda=f(z)\in f(\tilde\sigma_{ess}(A))=\sigma_{ess}(f(A))$, contrary to the hypothesis. Hence, $zI-A$ is Fredholm for every $z\in M_\lambda$.

Assume now that $l_0, l_\infty \neq \lambda$. Then $M_\lambda=\{z_1,\ldots,z_N\}$ is finite since the zeros cannot accumulate in $S_\omega$ or at either endpoint. 
Since every $z_j\in\sigma(A)$ lies inside $S_\omega$, the scalar function $f$ is holomorphic on a disc about $z_j$, and we may write $f(z)-\lambda=(z-z_j)^{n_j}h_j(z)$, where $n_j=\operatorname{ord}_{z_j}(f-\lambda)$ and $h_j(z_j)\neq0$.
Define
\[
    g_\lambda(z)=(\lambda-f(z))
    \prod_{j=1}^N\left(\frac{1+z}{z_j-z}\right)^{n_j}.
\]
Then the apparent singularities of \(g_\lambda\) at the points $z_j$ are removable. After filling them in, $g_\lambda$ is bounded and holomorphic on $S_\omega$. Moreover, it is readily seen that \(g_\lambda\) does not vanish on \(\tilde \sigma(A)\) and that it has non-vanishing polynomial limits at \(0\) and \(\infty\).  
Consequently, $g_\lambda\in\EE$ and by the spectral mapping theorem, 
\[0\notin g_\lambda(\tilde\sigma(A)) = \tilde\sigma(g_\lambda(A)),
\]
so \(g_\lambda(A)\) is a bounded and invertible operator.

Rearranging the definition of \(g_\lambda\) gives $\lambda-f(z)=g_\lambda(z)\prod_{j=1}^N((z_j-z)/(1+z))^{n_j}$.
Since $A$ is sectorial, $-1\in\rho(A)$ and then the algebraic properties of the \(\EE\)-calculus yield
\begin{equation}\label{eq:factorization_FC}
    \lambda I-f(A)=g_\lambda(A)
    \prod_{j=1}^N
    \left((z_jI-A)(I+A)^{-1}\right)^{n_j}.
\end{equation}


Set $T_j=(z_jI-A)(I+A)^{-1}$. Since $(I+A)^{-1}:X\to D(A)$ is an isomorphism when $D(A)$ carries the graph norm, $T_j$ and $z_jI-A:D(A)\to X$ have the same kernel dimension, range, Fredholm character, and index. Additivity of the Fredholm index in \eqref{eq:factorization_FC} now yields
\begin{equation}\label{eq:index_FC}
    \operatorname{Ind}(\lambda I-f(A))
    =\sum_{z\in M_\lambda}\operatorname{ord}_z(f-\lambda)
    \operatorname{Ind}(zI-A);
\end{equation}
see \cite[Theorem I.3.16]{Edmunds1987}. Finally, note that if every $zI-A$, $z\in M_\lambda$, is injective, respectively surjective, then the normalized factors \(T_1, \ldots, T_N\) have the same property. Since they commute and $g_\lambda(A)$ is invertible, \eqref{eq:factorization_FC} shows that $\lambda I-f(A)$ is respectively injective or surjective. 

For the point spectrum, let $M=\{0,\infty\}\cap\tilde\sigma(A)$. Then
\begin{equation}\label{eq:SMT_point}
    f(\sigma_{point}(A))\subseteq\sigma_{point}(f(A))
    \subseteq f(\sigma_{point}(A))\cup f(M),
\end{equation}
see \cite[Corollary 6.6]{Haase2005}. 

If $A$ generates a semigroup satisfying $\norm{T(t)}\leq Me^{at}$, then $aI-A$ is sectorial of angle $\pi/2$. Indeed, the semigroup resolvent formula gives
\[
    \norm{((\lambda+a)I-A)^{-1}}\leq\frac{M}{\Real\lambda},
    \qquad \Real\lambda>0.
\]
Together with the elementary geometry of sectors, this is the required sectorial resolvent estimate.
As in the introduction, we are interested in functions that are Laplace transforms of measures. For $a\in\R$, write $f_a(z)=f(z-a)$ and $f\in\EE_a$ when $f_a\in\EE$. We define
\[
    f(-A)=f_a(aI-A).
\]
Let \(\LL\nu\) denote the Laplace transform of a measure \(\nu\) on \([0,\infty)\), that is
\[\LL\nu(z)=\int_{[0,\infty)}e^{-zt}\,d\nu(t),
\]
defined for suitable \(z \in \C\). If $\int_{[0,\infty)}e^{at}\,d\nu(t)<\infty$ and $\LL\nu\in\EE_a$, the semigroup calculus and its compatibility with the sectorial calculus give
\begin{equation}\label{eq:average_calculus}
	\int_{[0,\infty)}T(t)\,d\nu(t)=\LL\nu(-A),
\end{equation}
where the integral is understood in the strong sense, see \cite[Proposition 3.3.2]{Haase2006}. 

Note that if \(\mu\) is a finite measure on \((0,1]\) satisfying \(\nu = \kappa(\mu)\), where \(\kappa:(0,1]\to [0,\infty)\) is given by \(\kappa(t) = -\log t\), then we have
\[
\mathcal M\mu(z)=\LL\nu(z-1), \qquad \Real z>1.
\]
Hence, $\LL\nu\in\EE_a$ is equivalent to $\mathcal M\mu\in\EE_{a-1}$, and the Mellin transform allows us to state the results directly in terms of the original measure \(\mu\) on $(0,1]$. Finally, note that whenever $\mathcal M\mu \in \EE_a$ for some \(a\in \R\), then
\[
\mathcal M\mu(\infty)=\lim_{x\to\infty}\int_{(0,1]}t^{x-1}\,d\mu(t)=\mu(\{1\}).
\]

\section{Proof of Theorem \ref{th:spectral_result_Hausdorff_BMOA}}\label{sec:proof_theorem_1_1}

We first identify $\Ah_\mu$ with the integral operator $T_\mu$ on $H^p$ defined in~\eqref{eq:integral_T2}, compute its spectral properties, and transfer them to $\Ha_\mu$ on BMOA by duality.

\begin{lemma}\label{lemma:A_equals_T}
Let $1\leq p<\infty$ and let $\mu$ be a finite positive Borel measure on $(0,1]$ such that $\int_{(0,1]}t^{-1/p}\,d\mu(t)<\infty$. Then $T_\mu$ is bounded on $H^p$ and $\Ah_\mu=T_\mu$.
\end{lemma}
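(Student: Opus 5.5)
Throughout, $T_\mu(f)=\int_{(0,1]}T(-\log t)f\,d\mu(t)$ is understood as a Bochner integral in $H^p$. The plan has two parts: boundedness via the norm bound of \Cref{lemma:semigroup_T_Hardy}, and identification with $\Ah_\mu$ on polynomials followed by density.

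\emph{Boundedness.} By \Cref{lemma:semigroup_T_Hardy}, $(T(t))_{t\geq0}$ is strongly continuous on $H^p$ with $\norm{T(t)}\lesssim e^{t/p}$. Hence $s\mapsto T(-\log s)f$ is continuous from $(0,1]$ into $H^p$, so it is strongly measurable, and
\[
\norm{T(-\log s)f}_{H^p}\lesssim s^{-1/p}\norm{f}_{H^p}.
\]
The hypothesis $\int_{(0,1]}s^{-1/p}\,d\mu(s)<\infty$ therefore makes the integral Bochner convergent, with
\[
\norm{T_\mu f}_{H^p}\lesssim \Bigl(\int_{(0,1]} s^{-1/p}\,d\mu(s)\Bigr)\norm{f}_{H^p}.
\]
Point evaluations are bounded on $H^p$ and commute with Bochner integrals. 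So $T_\mu f(z)$ equals the pointwise integral $\int f(\Psi_s(z))\,d\mu(s)$, since $\Psi_{s}=\psi_{-\log s}$. This makes $T_\mu$ consistent with \eqref{eq:integral_T}.

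\emph{Identification on polynomials.} Take $f(z)=z^k$. By the binomial theorem,
\[
\Psi_s(z)^k=(sz+1-s)^k=\sum_{n=0}^k\binom{k}{n}s^n(1-s)^{k-n}z^n.
\]
Integrating against $\mu$ gives $\sum_{n=0}^k c_{k,n}z^n$, using \eqref{entries}. This is exactly $\Ah_\mu(z^k)$ from \eqref{quasiHaus}, since the inner sum there reduces to the single term $c_{k,n}$. Linearity extends the equality to all polynomials.

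\emph{Density.} It remains to make sense of the equality $\Ah_\mu=T_\mu$ on all of $H^p$. Since \eqref{quasiHaus} is a priori defined only for suitable $f$, I interpret the statement as follows: $\Ah_\mu$ extends from the polynomials, which are dense in $H^p$, to a bounded operator on $H^p$, and that extension is $T_\mu$. I would add a check that for every $f=\sum a_kz^k\in H^p$ the series in \eqref{quasiHaus} converges and represents $T_\mu f$. Take polynomials $p_m\to f$ in $H^p$. Then $T_\mu p_m\to T_\mu f$ in $H^p$, hence the Taylor coefficients converge. The $n$-th coefficient of $T_\mu f$ is $\int \frac{1}{n!}\,(f\circ\Psi_s)^{(n)}(0)\,d\mu(s)$. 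I would identify this with $\sum_{k\geq n}c_{k,n}a_k$ by expanding $f(\Psi_s(z))$ in powers of $z$ and exchanging sum and integral.

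\emph{Main obstacle.} The delicate point is justifying that exchange. I would control it via coefficient convergence of the $p_m$, which holds because coefficient functionals are bounded on $H^p$. Alternatively, one can simply state the equality on the dense set of polynomials together with the bounded extension, which is all that is used later.
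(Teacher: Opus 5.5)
Your boundedness estimate for $T_\mu$ and the monomial computation are exactly the paper's argument. The difference is in the last step. The paper invokes \cite[Theorem 2.1]{GalanopoulosPapadimitrakis2006} for the boundedness of $\Ah_\mu$ on $H^p$, that is, for the fact that the inner sums in \eqref{quasiHaus} converge for every $f\in H^p$ and define a bounded operator. Two bounded operators agreeing on polynomials then coincide, so $\Ah_\mu=T_\mu$ holds for the series-defined operator. Without that citation, your argument proves only the weaker statement that $T_\mu$ is the bounded extension of $\Ah_\mu$ from polynomials. As you note, this is all that \Cref{lemma:spectrum_T,lemma:duality_BMOA} use. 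The device you suggest for the stronger statement does not work as stated. From $p_m\to f$ in $H^p$ you get $\widehat{T_\mu p_m}(n)\to\widehat{T_\mu f}(n)$ and $\hat p_m(k)\to a_k$ for each fixed $k$. But nothing controls the tails $\sum_{k>N}c_{k,n}\hat p_m(k)$ uniformly in $m$, so you cannot conclude that $\sum_{k\geq n}c_{k,n}a_k$ converges, let alone to $\widehat{T_\mu f}(n)$.

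The gap closes without the citation if you choose the approximants carefully, and the argument splits according to $p$.

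For $1<p<\infty$, take $p_m=S_mf$, the Taylor partial sums, which converge to $f$ in $H^p$ by M.~Riesz's theorem. Since $T_\mu=\Ah_\mu$ on polynomials, $\widehat{T_\mu S_mf}(n)=\sum_{k=n}^m c_{k,n}a_k$ exactly. Boundedness of $f\mapsto\widehat{T_\mu f}(n)$ then gives convergence of the inner series to $\widehat{T_\mu f}(n)$.

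For $p=1$ partial sums need not converge, but your exchange of sum and integral is justified directly by Tonelli's theorem. The $n$-th coefficient of $f\circ\Psi_s$ is $\sum_{k\geq n}\binom{k}{n}a_k s^n(1-s)^{k-n}$. The bound $|a_k|\leq\norm{f}_{H^1}$ together with $\sum_{k\geq n}\binom{k}{n}(1-s)^{k-n}=s^{-n-1}$ bounds the absolute series by $\norm{f}_{H^1}s^{-1}$. This is $\mu$-integrable precisely under the hypothesis $\int_{(0,1]}t^{-1}\,d\mu(t)<\infty$.

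With this addition your proof is self-contained and recovers the sufficiency half of the cited boundedness theorem. The paper's version is shorter because it imports that result.
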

\begin{proof}
The boundedness of $\Ah_\mu$ is given in \cite[Theorem 2.1]{GalanopoulosPapadimitrakis2006}. On the other hand, by \Cref{lemma:semigroup_T_Hardy},
\[
    \norm{T_\mu f}_{H^p}\leq\int_{[0,\infty)}\norm{T(t)f}_{H^p}\,d\nu(t)\lesssim\norm{f}_{H^p}\int_{[0,\infty)}e^{t/p}\,d\nu(t)=\norm{f}_{H^p}\int_{(0,1]}t^{-1/p}\,d\mu(t).
\]
Thus, both operators are bounded. They agree on polynomials, so density gives $\Ah_\mu=T_\mu$ on $H^p$.
\end{proof}

To determine the essential spectrum of $A_{H^p}$, we first prove two range lemmas.

\begin{remark}\label{remark:range_A}
Let $f\in H^p$ and $\lambda\in\C$. Then $f\in\ran(\lambda I-A_{H^p})$ if and only if there exists $K\in\C$ such that
\begin{equation}\label{eq:g_surjectivity}
    g(z)=\frac{1}{(1-z)^\lambda}\left(K-\int_0^z(1-\xi)^{\lambda-1}f(\xi)\,d\xi\right)
\end{equation}
belongs to $H^p$. Indeed, in this case $g\in D(A_{H^p})$ and $(\lambda I-A_{H^p})g=f$.
\end{remark}

If $\Real\lambda>1/p$, surjectivity follows from \Cref{lemma:spectrum_A_Hardy}. If $\Real\lambda<1/p$, then $(1-z)^{-\lambda}\in H^p$, so the choice of $K$ does not affect membership in $H^p$. For suitable functions $f$, we consider
\[
    (\Lambda_{0,\lambda}f)(z)=-\frac{1}{(1-z)^\lambda}\int_0^z(1-\xi)^{\lambda-1}f(\xi)\,d\xi,
\]
\[
    (\Lambda_{1,\lambda}f)(z)=\frac{1}{(1-z)^\lambda}\int_z^1(1-\xi)^{\lambda-1}f(\xi)\,d\xi,
\]
where the last integral is taken along a path from \(z\) to \(1\), contained in \(\D\) except for the endpoint \(1\), and approaching \(1\) non-tangentially. Whenever we use \(\Lambda_{1,\lambda}f\), the value of this integral will be independent of the choice of such a path.

\begin{lemma}\label{lemma:ess_spec1}
Let $1\leq p<\infty$, $\delta>0$, and $\Real\lambda>1/p-\delta$. If $f\in H^p$ satisfies $f/(1-(\cdot))^\delta\in H^p$, then $g=\Lambda_{1,\lambda}f$ belongs to $H^p$ and $(\lambda I-A_{H^p})g=f$.
\end{lemma}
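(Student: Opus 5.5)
The plan is to first make $\Lambda_{1,\lambda}f$ well defined and then bound it in $H^p$ by a Hardy-type inequality. By the change of variables $\xi = z+s(1-z)$, $s\in[0,1]$, along the segment from $z$ to $1$ (which is a non-tangential path ending at $1$, with path independence coming from Cauchy's theorem once integrability near $1$ is known), we get $1-\xi=(1-s)(1-z)$ and
\[
(\Lambda_{1,\lambda}f)(z)=\int_0^1 (1-s)^{\lambda-1} f\bigl(z+s(1-z)\bigr)\,ds .
\]
Put $F(z)=f(z)/(1-z)^\delta$, which lies in $H^p$ by hypothesis. Then
\[
f\bigl(z+s(1-z)\bigr)=(1-s)^\delta(1-z)^\delta F(\psi_{t}(z)), \qquad s=1-e^{-t},
\]
because $z+s(1-z)=\psi_t(z)$ with $1-s=e^{-t}$. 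Substituting $ds=e^{-t}dt$ gives the representation
\[
\Lambda_{1,\lambda}f=(1-z)^\delta\int_0^\infty e^{-(\lambda+\delta)t}\,T(t)F\,dt .
\]

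The key step is the $H^p$ bound. Lemma~\ref{lemma:semigroup_T_Hardy} gives $\norm{T(t)F}_{H^p}\lesssim e^{t/p}\norm{F}_{H^p}$, and $\Real(\lambda+\delta)>1/p$ by assumption. Hence the Bochner integral converges absolutely in $H^p$, and in fact it equals $(\lambda+\delta-A_{H^p})^{-1}F$ by the resolvent formula. Multiplication by the bounded function $(1-z)^\delta$ preserves $H^p$, so $g\in H^p$. The same absolute convergence, applied pointwise, justifies the convergence of the original path integral near $1$. It also yields path independence for non-tangential approach, since $F(\psi_t(z))$ stays controlled as $\psi_t(z)\to1$.

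It remains to verify $(\lambda I-A_{H^p})g=f$. I plan to do this by direct differentiation. From $g(z)=(1-z)^{-\lambda}\int_z^1(1-\xi)^{\lambda-1}f(\xi)\,d\xi$, we get
\[
(1-z)g'(z)=\lambda g(z)-f(z),
\]
so $(1-z)g'=\lambda g-f\in H^p$. Therefore $g\in D(A_{H^p})$ by Lemma~\ref{lemma:semigroup_T_Hardy}, and $(\lambda I-A_{H^p})g=f$. Alternatively, this is the case of Remark~\ref{remark:range_A} with $K=\int_0^1(1-\xi)^{\lambda-1}f(\xi)\,d\xi$.

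The main obstacle I expect is the case $\Real\lambda\le 0$, where $(1-s)^{\lambda-1}$ is not integrable on its own. There the convergence must come entirely from the factor $(1-s)^\delta$ together with the semigroup growth bound. The resolvent representation above handles exactly this, so the plan is to establish that identity carefully: first for polynomials $F$, then by density, using boundedness of point evaluations on $H^p$ to pass between pointwise and norm statements.
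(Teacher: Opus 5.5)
Your proposal is correct and is essentially the paper's proof. The paper also realizes $\Lambda_{1,\lambda}f$ as the Bochner integral $\int_0^\infty e^{-\lambda t}T(t)f\,dt$ via $\xi=\psi_t(z)$ (your segment is exactly the orbit $\{\psi_t(z):t\geq0\}$), gets convergence from $T(t)f=(1-(\cdot))^\delta e^{-\delta t}T(t)F$ (your factorization $g=(1-(\cdot))^\delta(\lambda+\delta-A_{H^p})^{-1}F$), and finishes with Remark~\ref{remark:range_A} using the same constant $K=g(0)$. The one place to tighten is path independence, which needs the Hardy pointwise bound $|f(\xi)|\lesssim|1-\xi|^\delta(1-|\xi|)^{-1/p}\norm{F}_{H^p}$ on a whole Stolz angle at $1$, not just along $\psi_t(z)$; the density step you plan at the end is unnecessary, because point evaluations commute with the Bochner integral.
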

\begin{proof}
First, we prove that
\[
    \tilde g=\int_0^\infty e^{-\lambda t}T(t)f\,dt
\]
is a Bochner integral in $H^p$. Since $1-\psi_t(z)=e^{-t}(1-z)$,
\[
    T(t)f=(1-(\cdot))^\delta e^{-\delta t}
    T(t)\left(\frac{f}{(1-(\cdot))^\delta}\right),
\]
and consequently
\[
    \norm{T(t)f}_{H^p}\lesssim e^{(1/p-\delta)t}
    \norm*{\frac{f}{(1-(\cdot))^\delta}}_{H^p}.
\]
Thus, the integral converges because $\Real\lambda>1/p-\delta$. The change of variable $\xi=\psi_t(z)$ gives
\begin{equation}\label{eq:lambda_change_variable}
    \tilde g(z)=\frac{1}{(1-z)^\lambda}\int_z^1(1-\xi)^{\lambda-1}f(\xi)\,d\xi
    =(\Lambda_{1,\lambda}f)(z),
\end{equation}
where the integration path is $\{\psi_t(z):t\geq0\}$. Since the Hardy-space pointwise estimate gives
\[
|f(z)|\lesssim\frac{|1-z|^\delta}{(1-|z|)^{1/p}}
\norm*{\frac{f}{(1-(\cdot))^\delta}}_{H^p},
\]
it follows from Cauchy's theorem and the condition \(\Real\lambda+\delta>1/p\) that the integral may equivalently be taken along any path approaching \(1\) non-tangentially. Therefore, setting \(K:=\tilde g(0)\), we obtain
\[
\frac{1}{(1-z)^\lambda}\left(K-\int_0^z(1-\xi)^{\lambda-1}f(\xi)\,d\xi\right) = (\Lambda_{1,\lambda}f)(z).
\]
The conclusion now follows from \Cref{remark:range_A}.
\end{proof}


\begin{lemma}\label{lemma:ess_spec2}
Let $1\leq p<\infty$, $\Real\lambda<1/p$, and $m\in\N_0$ with $m>2/p-\Real\lambda$. If $f\in H^p$ has a zero of order at least $m$ at $0$, then $g=\Lambda_{0,\lambda}f$ belongs to $H^p$ and $(\lambda I-A_{H^p})g=f$.
\end{lemma}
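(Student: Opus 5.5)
The approach is to write $\Lambda_{0,\lambda}f$ as a contour integral of $f$ along the ray from $0$, show that it lies in $H^p$, and then verify $(\lambda I-A_{H^p})g=f$ directly. Once $g\in H^p$ is known, \Cref{remark:range_A} with $K=0$ gives the identity: $g$ has the form \eqref{eq:g_surjectivity}, so $g\in D(A_{H^p})$ and $(\lambda I-A_{H^p})g=f$. Everything therefore reduces to proving $g\in H^p$.

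\textbf{Integral representation.} I would use a representation analogous to \eqref{eq:lambda_change_variable}, but running the flow backwards from $0$. Write $f(\xi)=\xi^m f_m(\xi)$, where $f_m\in H^p$ with $\norm{f_m}_{H^p}\le\norm{f}_{H^p}$; this holds because division by $z^m$ is an isometry on $H^p$ for functions vanishing to order $m$ at $0$. Parametrize the segment from $0$ to $z$ as $\xi=sz$, $0\le s\le1$. This gives
\[
g(z)=-\frac{z}{(1-z)^\lambda}\int_0^1(1-sz)^{\lambda-1}(sz)^m f_m(sz)\,ds .
\]

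\textbf{Estimating $g$.} Away from $z=1$ everything is bounded, so the only issue is the singular factor $(1-z)^{-\lambda}$, which is not in $H^p$ when $\Real\lambda\ge 1/p$. Cancellation must come from the integral near $s=1$, and this is a problem because the integral is taken from $0$, not from $1$. To handle it I would split
\[
\int_0^z=\int_0^1-\int_z^1 ,
\]
where the integral $\int_0^1$ is taken along a path approaching $1$ nontangentially, provided it converges. The $\int_z^1$ part equals $-\Lambda_{1,\lambda}f$ and can be controlled via \Cref{lemma:ess_spec1}. The constant part contributes a multiple of $(1-z)^{-\lambda}$, which lies in $H^p$ since $\Real\lambda<1/p$.

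\textbf{Role of the zero at $0$ (expected main obstacle).} Under the stated hypotheses, $f$ need not be small near $1$, so \Cref{lemma:ess_spec1} is not available as is. The zero of order $m$ at $0$ must instead be used to gain decay. My first attempt would be to conjugate by the flow: write $g=\int_0^\infty e^{\lambda t}T(-t)f\,dt$ formally, where $T(-t)$ uses $\psi_t^{-1}(z)=(z-1+e^{-t})e^{t}$. This map leaves the disc, so that formula fails. As a fallback I would estimate directly with the radial integral means,
\[
M_p(r,g)\le \int_0^1 \frac{s^m}{\dots}\,M_p\bigl(sr,\, (1-\cdot)^{\lambda-1}(1-\cdot)^{-\lambda}\bigr)\,ds ,
\]
in the spirit of a Hardy-type inequality. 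Since $|1-sz|\ge(1-s)$ and $|1-sz|\gtrsim|1-z|$, the kernel $|1-sz|^{\Real\lambda-1}|1-z|^{-\Real\lambda}$ splits into two regimes. In the regime $|1-z|\le 1-s$ the kernel is bounded by $(1-s)^{\Real\lambda-1}|1-z|^{-\Real\lambda}$, and the factor $s^m$ absorbs the growth in $s$. The other regime is a standard Hilbert-type kernel. Combined with $|f(sz)|\lesssim(1-s|z|)^{-1/p}\norm{f}_{H^p}$, the condition $m>2/p-\Real\lambda$ is exactly what makes the resulting kernel integrable. Making this bookkeeping rigorous, most likely via a Minkowski-type inequality on $M_p$ together with a Forelli--Rudin estimate for $\int_0^{2\pi}|1-re^{i\theta}|^{-a}d\theta$, is where I expect the real work to lie.
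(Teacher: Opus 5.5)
Your reduction is correct and matches the paper: once $g\in H^p$ is known, \Cref{remark:range_A} with $K=0$ gives $g\in D(A_{H^p})$ and $(\lambda I-A_{H^p})g=f$. The gap is that $g\in H^p$, which is the whole content of the lemma, is never established, and none of your three routes closes.

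\begin{itemize}
\item The splitting $\int_0^z=\int_0^1-\int_z^1$ fails even before \Cref{lemma:ess_spec1} comes in. For $\Real\lambda<1/p$ the constant $\int_0^1(1-\xi)^{\lambda-1}f(\xi)\,d\xi$ can diverge: take $f(\xi)=\xi^m(1-\xi)^{-a}$ with $\max\{\Real\lambda,0\}<a<1/p$.
\item The backward $\psi_t$-flow leaves $\D$, as you note yourself.
\item The Hardy-type fallback misplaces the role of $m$. The only singular regime is $s\to1$, $z\to1$, where $s^m\approx1$ absorbs nothing. Pointwise bounds of the form $|f(sz)|\lesssim(1-s|z|)^{-1/p}$ cannot produce bounded integral means.
\end{itemize}

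The last point is not just bookkeeping. Already for $\lambda=0$ you have $\Lambda_{0,0}f=-z\,\mathcal{C}f$, where $\mathcal{C}f(z)=\frac1z\int_0^z\frac{f(\xi)}{1-\xi}\,d\xi$ is the Cesàro operator. So the lemma includes the nontrivial fact that $\mathcal{C}$ maps $z^mH^p$ boundedly into $H^p$. The estimates you list (Minkowski with the supremum of the kernel, or a Forelli--Rudin bound combined with the pointwise estimate) give at best $M_p(r,g)\lesssim\norm{f}_{H^p}\int_0^1(1-sr)^{-1}\,ds$, which diverges logarithmically.

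The missing idea is to integrate along the trajectory of the \emph{other} semiflow $\phi_t$, which flows into $0$ and satisfies $\partial_t\phi_t=-\phi_t(1-\phi_t)$. The substitution $\xi=\phi_t(z)$ gives
\[
    -g=\int_0^\infty\phi_t\Bigl(\frac{1-\phi_t}{1-(\cdot)}\Bigr)^{\lambda}(f\circ\phi_t)\,dt .
\]
So you need an operator-norm bound for weighted composition operators rather than a kernel estimate. Write $f=z^mh$ with $\norm{h}_{H^p}=\norm{f}_{H^p}$. The integrand is then $M_t\,(\phi_t')^{1/p}(h\circ\phi_t)$, where
\[
    M_t(z)=\frac{e^{-(m+1-1/p)t}z^{m+1}}{\bigl((e^{-t}-1)z+1\bigr)^{m+\lambda+1-2/p}} .
\]
Two estimates control this:
\begin{itemize}
\item $\norm{(\phi_t')^{1/p}(h\circ\phi_t)}_{H^p}\lesssim\norm{h}_{H^p}$ uniformly in $t$, because arc length on the circle $\phi_t(\partial\D)$ is a Carleson measure with a $t$-independent constant.
\item $\norm{M_t}_\infty\lesssim e^{(\Real\lambda-1/p)t}$, since $|(e^{-t}-1)z+1|\geq e^{-t}$ and the exponent has positive real part.
\end{itemize}
This is exactly where the zero of order $m$ is used: the power $\phi_t^{m+1}$ compensates the factor $(\phi_t')^{-1/p}$ and the weight. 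Because $\Real\lambda<1/p$, the resulting Bochner integral converges in $H^p$.
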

\begin{proof}
The integral defining $\Lambda_{0,\lambda}f$ converges for every $z\in\D$. Using $\xi=\phi_t(z)$, we obtain
\begin{align*}
    -(\Lambda_{0,\lambda}f)(z)
    &=\frac{1}{(1-z)^\lambda}\int_0^z(1-\xi)^{\lambda-1}f(\xi)\,d\xi\\
    &=\int_0^\infty\phi_t(z)\left(\frac{1-\phi_t(z)}{1-z}\right)^\lambda f(\phi_t(z))\,dt.
\end{align*}
The estimate used here is a particular case of \cite[Proposition 3.1]{AbadiasOliva-Maza2024}:
\begin{equation}\label{eq:growth_bound_phit}
    \norm*{\phi_t\left(\frac{1-\phi_t}{1-(\cdot)}\right)^\lambda(f\circ\phi_t)}_{H^p}
    \lesssim e^{(\Real\lambda-1/p)t}\norm{f}_{H^p}.
\end{equation}
For completeness, it follows from the following weighted-composition estimate. The image of the unit circle under $\phi_t$ is a circle in $\overline\D$, and its arc-length measure has a Carleson bound independent of $t$. Hence,
\[
    \norm{(\phi_t')^{1/p}(h\circ\phi_t)}_{H^p}\lesssim\norm{h}_{H^p},
    \quad h\in H^p,\ t\geq0.
\]
On the other hand,
\[
    \phi_t(z)^{m+1}(\phi_t'(z))^{-1/p}
    \left(\frac{1-\phi_t(z)}{1-z}\right)^\lambda
    =\frac{e^{-(m+1-1/p)t}z^{m+1}}
    {((e^{-t}-1)z+1)^{m+\lambda+1-2/p}}.
\]
Since $m+\Real\lambda+1-2/p>0$, the supremum norm of this expression is at most
\[
    e^{(\Real\lambda-1/p)t+|\Imag\lambda|\pi/2}.
\]
Apply the weighted-composition estimate to $h(z)=f(z)/z^m$, which belongs to $H^p$ and has the same norm as $f$. This gives \eqref{eq:growth_bound_phit}. The integrand is strongly continuous, as follows first on polynomials and then by approximation using the same local bounds. Since $\Real\lambda<1/p$, its integral converges in $H^p$. Now apply \Cref{remark:range_A}.
\end{proof}

\begin{lemma}\label{lemma:ess_spectrum_A1}
Let $1\leq p<\infty$. If $\Real\lambda<1/p$, then $\lambda I-A_{H^p}$ is surjective and
\[
    \dim\ker(\lambda I-A_{H^p})=\operatorname{Ind}(\lambda I-A_{H^p})=1.
\]
Moreover,
\[
    \tilde\sigma_{ess}(A_{H^p})=\{\lambda\in\C:\Real\lambda=1/p\}\cup\{\infty\}.
\]
\end{lemma}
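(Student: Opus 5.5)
Let $\Real\lambda<1/p$. The kernel is immediate from \Cref{lemma:spectrum_A_Hardy}: $\ker(\lambda I-A_{H^p})$ is spanned by $f_\lambda(z)=(1-z)^{-\lambda}$, so it has dimension $1$.

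For surjectivity I will split $H^p$ into finitely many pieces. Fix $m\in\N_0$ with $m>2/p-\Real\lambda$ and write $f=P+z^m h$, where $P$ is the Taylor polynomial of $f$ of degree $m-1$. Then $z^m h\in H^p$ vanishes to order at least $m$ at $0$, and \Cref{lemma:ess_spec2} gives $\Lambda_{0,\lambda}(z^m h)\in H^p$ as a preimage. It remains to solve $(\lambda I-A_{H^p})g=z^k$ for $0\le k<m$. Write $z^k$ as a combination of $(1-z)^j$, $j=0,\dots,k$. Each $(1-z)^j$ satisfies the hypothesis of \Cref{lemma:ess_spec1} with $\delta=j+N$, except that $\delta\le j$ is needed for $(1-z)^{j-\delta}\in H^p$. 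So I expect the small $j$ to be the delicate point. It is simpler to compute directly: $A_{H^p}(1-z)^j=-j(1-z)^j$, so $(\lambda I-A_{H^p})(1-z)^j=(\lambda+j)(1-z)^j$. If $\lambda\neq -j$, then $(1-z)^j/(\lambda+j)$ is a preimage in $D(A_{H^p})$. If $\lambda=-j$, then $g=-(1-z)^j\log(1-z)$ works, since $(1-z)g'=j(1-z)^j\log(1-z)+(1-z)^j$, and hence $(\lambda I-A)g=(1-z)^j$; also $g,(1-z)g'\in H^p$. Thus every polynomial is in the range, and the range is all of $H^p$. Since $\dim\ker=1$ and $\codim\ran=0$, the operator is Fredholm with index $1$. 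Alternatively, one can use \Cref{remark:range_A} with a suitable $K$.

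For the essential spectrum, the previous paragraph shows that $\{\Real\lambda<1/p\}$ lies outside $\sigma_{ess}$, and $\{\Real\lambda>1/p\}\subseteq\rho(A_{H^p})$ by \Cref{lemma:spectrum_A_Hardy}. For $\Real\lambda=1/p$, I argue by contradiction using stability of the index. If $\lambda_0 I-A_{H^p}$ were Fredholm, then the Fredholm set is open and the index is locally constant on it. Hence points $\lambda$ near $\lambda_0$ with $\Real\lambda<1/p$ (index $1$) and with $\Real\lambda>1/p$ (index $0$) would share the index of $\lambda_0$, which is impossible. So the line $\Real\lambda=1/p$ is contained in $\sigma_{ess}$.

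Finally, $\infty$ belongs to $\tilde\sigma_{ess}$ because $D(A_{H^p})$ has infinite codimension in $H^p$. Indeed, $A_{H^p}$ is unbounded, and a closed operator whose domain has finite codimension is bounded: its domain is then closed, being of finite codimension and the range of a bounded map from the graph-normed space, so the closed graph theorem applies. The main obstacle is the solvability at the exceptional values $\lambda=-j$, which the explicit logarithmic preimage handles.
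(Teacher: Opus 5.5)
Your proof is correct, but the surjectivity step takes a different route from the paper's.

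\textbf{The paper's route.} It uses the identity $1=(z+(1-z))^n$ to write $f=\sum_k\binom{n}{k}z^k(1-z)^{n-k}f$. Terms with a high-order zero at $1$ go to \Cref{lemma:ess_spec1}, and terms with a high-order zero at $0$ go to \Cref{lemma:ess_spec2}.

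\textbf{Your route.} You split off the Taylor polynomial of degree $m-1$, so only \Cref{lemma:ess_spec2} is needed. The finite-dimensional polynomial part you handle by hand, via $(\lambda I-A_{H^p})(1-z)^j=(\lambda+j)(1-z)^j$ and a logarithmic solution at the resonant values $\lambda=-j$.

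\textbf{A sign slip.} With $\lambda=-j$ and $g=-(1-z)^j\log(1-z)$, your own computation of $(1-z)g'$ gives $(\lambda I-A_{H^p})g=-(1-z)^j$. The preimage of $(1-z)^j$ is therefore $(1-z)^j\log(1-z)$, which is still in $D(A_{H^p})$, so the conclusion is unaffected.

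\textbf{What each approach buys.} Yours is more elementary: it bypasses \Cref{lemma:ess_spec1}, with its Bochner-integral estimate and path-independence argument. The price is a case distinction at $\lambda\in-\N_0$. The paper's decomposition is uniform in $\lambda$ and gives the integral formulas $\Lambda_{1,\lambda}$, $\Lambda_{0,\lambda}$ for every piece. It is also the template reused for $D^p$ in \Cref{lemma:range_A_Dirichlet}.

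\textbf{The remaining steps.} The index-stability argument on the line $\Real\lambda=1/p$ is the same as the paper's. For the point $\infty$, the paper appeals to closedness of $\tilde\sigma_{ess}$ in the Riemann sphere. You instead check the defining condition, infinite codimension of $D(A_{H^p})$, directly through the closed graph theorem. Both arguments are fine.
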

\begin{proof}
Fix $\Real\lambda<1/p$ and $f\in H^p$. Choose $n\in\N$ sufficiently large that
\[
    \Real\lambda>1/p-\floor{n/2}, \qquad \floor{n/2}>2/p-\Real\lambda.
\]
Write
\begin{equation}\label{eq:f_decomposition}
    f=\sum_{k=0}^n\binom{n}{k}f_k, \qquad f_k(z)=z^k(1-z)^{n-k}f(z).
\end{equation}
This is the identity $1=(z+(1-z))^n$, arranged so that the two preceding lemmas apply to complementary terms.
If $k\leq\floor{n/2}$, then $f_k/(1-(\cdot))^{\floor{n/2}}\in H^p$, so $f_k\in\ran(\lambda I-A_{H^p})$ by \Cref{lemma:ess_spec1}. If $k>\floor{n/2}$, the same conclusion follows from \Cref{lemma:ess_spec2}. Thus, $\lambda I-A_{H^p}$ is surjective. Its kernel is one-dimensional by \Cref{lemma:spectrum_A_Hardy}, which gives index $1$.

For $\Real\lambda>1/p$, the operator is invertible and has index $0$. Since the Fredholm set is open and the index is locally constant, every point on $\Real\lambda=1/p$ belongs to the essential spectrum. There are no other finite points in the essential spectrum by what we have proved. Finally, the extended essential spectrum is closed in the Riemann sphere, so it also contains $\infty$.
\end{proof}

The generator of the uniformly bounded semigroup $(e^{-t/p}T(t))_{t\geq0}$ is $A_{H^p}-1/p$, so $1/p-A_{H^p}$ is sectorial of angle $\pi/2$. We now apply the functional calculus to obtain the spectral properties of $\Ah_\mu$.

\begin{lemma}\label{lemma:spectrum_T}
Let $1\leq p<\infty$ and let $\mu$ be a finite positive Borel measure on $(0,1]$ satisfying $\int_{(0,1]}t^{-1/p}\,d\mu(t)<\infty$. If $\mathcal M\mu\in\EE_{1/p-1}$, then $\Ah_\mu$ on $H^p$ satisfies:
\begin{enumerate}[(i)]
    \item $\displaystyle\sigma(\Ah_\mu)=\{\mathcal M\mu(z):\Real z\geq1-1/p\}\cup\{\mu(\{1\})\}$.
    \item $\displaystyle\sigma_{ess}(\Ah_\mu)=\{\mathcal M\mu(z):\Real z=1-1/p\}\cup\{\mu(\{1\})\}$.
    \item If $\lambda\notin\sigma_{ess}(\Ah_\mu)$, then $\lambda I-\Ah_\mu$ is surjective and
    \[
        \dim\ker(\lambda I-\Ah_\mu)=
        \sum_{\substack{\Real z>1-1/p\\\mathcal M\mu(z)=\lambda}}
        \operatorname{ord}_z(\mathcal M\mu-\lambda)<\infty.
    \]
    \item The point spectrum satisfies
    \begin{align*}
        \sigma_{point}(\Ah_\mu)
        &\subseteq\{\mathcal M\mu(z):\Real z>1-1/p\}\cup\{\mathcal M\mu(1-1/p),\mu(\{1\})\},\\
        \sigma_{point}(\Ah_\mu)
        &\supseteq\{\mathcal M\mu(z):\Real z>1-1/p\}.
    \end{align*}
\end{enumerate}
\end{lemma}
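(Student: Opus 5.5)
By \Cref{lemma:A_equals_T}, $\Ah_\mu=T_\mu=\int_{[0,\infty)}T(t)\,d\nu(t)$ on $H^p$, where $\nu=\kappa(\mu)$ and $\int e^{t/p}\,d\nu(t)=\int_{(0,1]}t^{-1/p}\,d\mu(t)<\infty$. Set $a=1/p$. Then $aI-A_{H^p}$ is sectorial of angle $\pi/2$, and the relation $\mathcal M\mu(z)=\LL\nu(z-1)$ shows that $\LL\nu\in\EE_a$ is equivalent to $\mathcal M\mu\in\EE_{1/p-1}$. So \eqref{eq:average_calculus} applies and gives $\Ah_\mu=\LL\nu(-A_{H^p})=F(aI-A_{H^p})$, with $F(w)=\LL\nu(w-a)=\mathcal M\mu(w+1-a)$. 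In other words, $\Ah_\mu$ is obtained by evaluating $\mathcal M\mu$ at $z=1-A_{H^p}$. Under the change of variable $w=aI-\lambda$, the condition $\Real\lambda\le1/p$ becomes $\Real w\ge0$, which in turn is $\Real z\ge1-1/p$. The point $w=\infty$ maps to $F(\infty)=\mathcal M\mu(\infty)=\mu(\{1\})$.

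For (i), note that $A_{H^p}$ is unbounded, so $\tilde\sigma(A_{H^p})=\sigma(A_{H^p})\cup\{\infty\}$. Applying \eqref{eq:SMT} together with \Cref{lemma:spectrum_A_Hardy} yields (i). The point $w=0$ corresponds to $z=1-1/p$, which lies on the boundary line, so it is already included. For (ii), \Cref{lemma:ess_spectrum_A1} gives $\tilde\sigma_{ess}(A_{H^p})=\{\Real\lambda=1/p\}\cup\{\infty\}$, and \eqref{eq:SMT} then yields (ii).

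For (iii), take $\lambda\notin\sigma_{ess}(\Ah_\mu)$. If $\lambda\notin\sigma(\Ah_\mu)$, the claim is trivial: the sum is empty, because any solution $z$ with $\Real z>1-1/p$ would place $\lambda$ in the spectrum. Otherwise, we use the factorization \eqref{eq:factorization_FC}. Its hypotheses $l_0,l_\infty\ne\lambda$ hold: $l_\infty=\mu(\{1\})$ and $l_0=\mathcal M\mu(1-1/p)$ both lie in $\sigma_{ess}(\Ah_\mu)$ by (ii). Each $w_j\in M_\lambda$ corresponds to a point $\lambda_j=a-w_j$ with $\Real\lambda_j<1/p$. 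At such points $\lambda_jI-A_{H^p}$ is surjective with one-dimensional kernel by \Cref{lemma:ess_spectrum_A1}, and hence so is $w_jI-(aI-A_{H^p})$. The surjectivity remark following \eqref{eq:index_FC} then shows that $\lambda I-\Ah_\mu$ is surjective. Consequently $\dim\ker(\lambda I-\Ah_\mu)=\operatorname{Ind}(\lambda I-\Ah_\mu)$, and by \eqref{eq:index_FC} this equals $\sum_j\operatorname{ord}_{w_j}(F-\lambda)\cdot 1$. Translating back, $\operatorname{ord}_{w_j}(F-\lambda)=\operatorname{ord}_{z_j}(\mathcal M\mu-\lambda)$ with $z_j=w_j+1-a$ and $\Real z_j>1-1/p$. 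Finiteness comes from the finiteness of $M_\lambda$ established earlier.

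For (iv), we apply \eqref{eq:SMT_point}. By \Cref{lemma:spectrum_A_Hardy}, the point spectrum of $aI-A_{H^p}$ is $\{\Real w>0\}$, and its image under $F$ is $\{\mathcal M\mu(z):\Real z>1-1/p\}$. The exceptional set is $M=\{0,\infty\}\cap\tilde\sigma$, and both points are present; its image is $\{\mathcal M\mu(1-1/p),\mu(\{1\})\}$. This gives both inclusions.

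The main point requiring care is (iii). One must verify that the factorization hypotheses hold and that the translation of orders and sectors under $w\mapsto z=w+1-1/p$ is correct. The rest is bookkeeping with the spectral mapping theorems.
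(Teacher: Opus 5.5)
Your proposal is correct and follows the paper's proof essentially step by step. You identify $\Ah_\mu=T_\mu=\LL\nu(-A_{H^p})$ via the shifted calculus with $a=1/p$, then read off (i)--(iv) from \eqref{eq:SMT}, the factorization \eqref{eq:factorization_FC}--\eqref{eq:index_FC}, the surjectivity and index-one information of \Cref{lemma:ess_spectrum_A1}, and \eqref{eq:SMT_point}. You are in fact more explicit than the paper, for instance in checking that $l_0=\mathcal M\mu(1-1/p)$ and $l_\infty=\mu(\{1\})$ lie in $\sigma_{ess}(\Ah_\mu)$. The one step you leave implicit, that each $w_j\in M_\lambda$ satisfies $\Real w_j>0$ rather than $\Real w_j=0$, is exactly the observation $M_\lambda\cap\sigma_{ess}=\varnothing$ already made in the preliminaries.
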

\begin{proof}
Let $\nu=\kappa(\mu)$, so $\mathcal M\mu(z)=\LL\nu(z-1)$. By \eqref{eq:average_calculus} and \Cref{lemma:A_equals_T},
\[
    \Ah_\mu = T_\mu=\int_{[0,\infty)}T(t)\,d\nu(t)=\LL\nu(-A_{H^p}).
\]
The spectrum of $1/p-A_{H^p}$ is $\overline{S_{\pi/2}}$, its point spectrum is $S_{\pi/2}$, and its extended essential spectrum is $i\R\cup\{\infty\}$, by \Cref{lemma:spectrum_A_Hardy,lemma:ess_spectrum_A1}. Thus, (i) and (ii) follow from \eqref{eq:SMT}, including the value $\LL\nu(\infty)=\mu(\{1\})$. For $\lambda\notin\sigma_{ess}(\Ah_\mu)$, the endpoint values are excluded and the zeros in the factorization are finite. The factorization and \eqref{eq:index_FC}, together with surjectivity in \Cref{lemma:ess_spectrum_A1}, then give (iii). Finally, \eqref{eq:SMT_point} gives (iv).
\end{proof}

We finish with the duality argument. Although the pairings are anti-linear in their first variable, the entries $c_{n,k}$ are real because $\mu$ is positive.

\begin{lemma}\label{lemma:duality_BMOA}
Let $\mu$ be a finite positive Borel measure on $(0,1]$ such that $\int_{(0,1]}t^{-1}\,d\mu(t)<\infty$. Then $\Ha_\mu$ is bounded on BMOA and
\[
    \langle\Ha_\mu f,g\rangle=\langle f,\Ah_\mu g\rangle,
    \quad f\in BMOA,\ g\in H^1.
\]
\end{lemma}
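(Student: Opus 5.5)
Boundedness of $\Ha_\mu$ on BMOA is already part of Theorem A(iii), since $\int_{(0,1]}t^{-1}\,d\mu(t)<\infty$. Likewise, Lemma \ref{lemma:A_equals_T} with $p=1$ gives that $\Ah_\mu=T_\mu$ is bounded on $H^1$. The content of the lemma is therefore the duality identity. The plan is to verify it first on polynomial data, where every sum is finite, and then extend it by density and continuity.

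First take $g$ a polynomial, $g(z)=\sum_{n=0}^N b_nz^n$, and $f(z)=\sum a_nz^n\in BMOA$. Then $\Ah_\mu g(z)=\sum_{n=0}^N\bigl(\sum_{k=n}^N c_{k,n}b_k\bigr)z^n$ is a polynomial. Using the coefficient form of the Fefferman pairing, we get
\[
\langle f,\Ah_\mu g\rangle=\sum_{n=0}^N\overline{a_n}\sum_{k=n}^N c_{k,n}b_k .
\]
On the other side, $\Ha_\mu f$ has coefficients $\sum_{n\le k}c_{k,n}a_n$. Since $g$ is a polynomial, the limit in $r$ is trivial, and
\[
\langle \Ha_\mu f,g\rangle=\sum_{k=0}^N\overline{\sum_{n=0}^k c_{k,n}a_n}\,b_k .
\]
Because $\mu$ is positive, the $c_{k,n}$ are real, so conjugation passes through them. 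Exchanging the two finite sums then gives equality.

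Next fix $f\in BMOA$ and let $g\in H^1$ be arbitrary. Choose polynomials $g_j\to g$ in $H^1$; for instance, the Fejér means of $g$ work, since polynomials are dense in $H^1$. The map $g\mapsto\langle \Ha_\mu f,g\rangle$ is continuous on $H^1$, because $\Ha_\mu f\in BMOA$ and Fefferman's duality makes the pairing a bounded functional on $H^1$. Similarly, $g\mapsto\langle f,\Ah_\mu g\rangle$ is continuous, as the composition of the bounded operator $\Ah_\mu$ on $H^1$ with the bounded functional $\langle f,\cdot\rangle$. Both sides agree on each $g_j$, so they agree on $g$.

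The only delicate point I expect is the justification that the pairing of a BMOA function with a \emph{polynomial} is given by the finite coefficient sum. This is immediate from the $\lim_{r\to1^-}\sum\overline{a_n}b_nr^n$ formula, because the series terminates. One must also make sure the pairing is genuinely continuous in the $H^1$ variable, which is exactly Fefferman's theorem as stated in the preliminaries. Everything else is finite algebra together with the already established boundedness results.
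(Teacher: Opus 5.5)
Your proposal is correct and follows the paper's proof: boundedness comes from the cited results, the identity is checked on polynomial $g$ by exchanging finite sums using that the $c_{n,k}$ are real, and it extends by density of polynomials in $H^1$. You also make explicit why both sides are continuous in $g$ (Fefferman duality, and boundedness of $\Ah_\mu$ on $H^1$ via Lemma~\ref{lemma:A_equals_T} with $p=1$), which the paper leaves implicit.
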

\begin{proof}
Boundedness follows from \cite[Theorems 2.1 and 4.2]{GalanopoulosPapadimitrakis2006}. Let $f(z)=\sum a_kz^k\in BMOA$ and let $g(z)=\sum_{n=0}^d b_nz^n$ be a polynomial. The coefficient representation of the pairing gives
\[
    \langle\Ha_\mu f,g\rangle=\sum_{n=0}^d\sum_{k=0}^n c_{n,k}\overline{a_k}b_n=\sum_{k=0}^d\overline{a_k}\sum_{n=k}^d c_{n,k}b_n=\langle f,\Ah_\mu g\rangle.
\]
All sums are finite. Density of polynomials in $H^1$ now extends the equality to every $g\in H^1$.
\end{proof}

Under the anti-linear isomorphism between BMOA and $(H^1)^*$, this lemma identifies $\Ha_\mu$ with the adjoint of $\Ah_\mu$. Anti-linear duality initially conjugates the spectral parameter. Since the entries of $\Ah_\mu$ are real, coefficientwise conjugation commutes with $\Ah_\mu$ and interchanges the parameters $\lambda$ and $\bar\lambda$. This removes the conjugation and gives
\[
    \sigma(\Ha_\mu)=\sigma(\Ah_\mu) 
    , \qquad
    \sigma_{ess}(\Ha_\mu)=\sigma_{ess}(\Ah_\mu)
    .
\]
For $\lambda$ outside the essential spectrum, adjoint duality also gives
\[
    \dim\ker(\lambda I-\Ha_\mu)=\codim\ran(\lambda I-\Ah_\mu), \qquad
    \codim\ran(\lambda I-\Ha_\mu)=\dim\ker(\lambda I-\Ah_\mu),
\]
see \cite[p. 205]{Yosida1965}. Now \Cref{lemma:spectrum_T} with $p=1$ proves \Cref{th:spectral_result_Hausdorff_BMOA}.

\section{Proof of Theorem \ref{th:spectral_result_Hausdorff_Bloch}}\label{sec:proof_theorem_1_2}

We first study $(T(t))_{t\geq0}$ on $D^p$ and use similarity to obtain the spectral properties of $(U(t))_{t\geq0}$ on $A^p$. We then identify $\BB_\mu$ with $U_\mu$ and finish the proof by duality between $A^1$ and $\Bloch$.

\begin{lemma}\label{lemma:semigroup_T_Dirichlet}
For $1\leq p<\infty$, $(T(t))_{t\geq0}$ is a strongly continuous semigroup on $D^p$. Its growth bound satisfies $\omega_0\leq2/p-1$ for $p<2$ and $\omega_0=0$ for $p\geq2$. Its infinitesimal generator is
\begin{equation}\label{eq:generator_A_Dirichlet}
    A_{D^p}f(z)=(1-z)f'(z), \quad z\in\D,
\end{equation}
with domain $D(A_{D^p})=\{f\in D^p:(1-z)f'\in D^p\}$.
\end{lemma}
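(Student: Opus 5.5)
The plan is to bound the norms of $T(t)$ on $D^p$ directly through the derivative, and then invoke \Cref{prop:condition_semigroup_is_strongly}. For $f\in D^p$ we have $(T(t)f)'(z)=e^{-t}f'(\psi_t(z))$. Hence $\norm{T(t)f}_{D^p}^p=|f(\psi_t(0))|^p+e^{-pt}\norm{T(t)f'}_{A^p}^p$. By \Cref{lemma:semigroup_T_Bergman}, $\norm{T(t)f'}_{A^p}\le e^{2t/p}\norm{f'}_{A^p}$, so the derivative part is at most $e^{(2/p-1)t}\norm{f'}_{A^p}$.

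For the point-evaluation term, $\psi_t(0)=1-e^{-t}$, so $1-|\psi_t(0)|^2\asymp e^{-t}$ for $t\geq0$. We split into the three cases given by the estimates recorded earlier.
\begin{enumerate}[(i)]
    \item For $p<2$, \eqref{eq:point_evaluation_Dirichlet_case_1} gives $|f(\psi_t(0))|\lesssim e^{(2/p-1)t}\norm{f}_{D^p}$.
    \item For $p=2$, \eqref{eq:point_evaluation_Dirichlet_case_2} gives $|f(\psi_t(0))|\lesssim(1+\sqrt{t})\norm{f}_{D^2}$.
    \item For $p>2$, \eqref{eq:point_evaluation_Dirichlet_case_3} gives a uniform bound.
\end{enumerate}
Altogether we get $\norm{T(t)}_{D^p}\lesssim e^{(2/p-1)t}$ for $p<2$, $\lesssim 1+\sqrt t$ for $p=2$, and $\lesssim 1$ for $p>2$. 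In particular these norms are locally bounded. Since polynomials are dense in $D^p$ (via density in $A^p$ applied to $f'$), \Cref{prop:condition_semigroup_is_strongly} gives strong continuity. The same estimates give $\omega_0\le 2/p-1$ when $p<2$ and $\omega_0\le0$ when $p\ge2$; for $p=2$ this is because $\log(1+\sqrt t)/t\to0$.

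For $p\ge2$ we also need the reverse inequality $\omega_0\ge0$, which I expect to be the subtle point. The constants are fixed by every $T(t)$, so $1$ is an eigenvalue of $T(t)$ and $r(T(t))\ge1$. Hence $\norm{T(t)^n}\ge1$ for all $n$, which forces $\omega_0\ge0$. The same argument shows $\omega_0\ge 0$ for all $p$, but it is not needed there.

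Finally we identify the generator. For polynomials $f$, differentiating $f(\psi_t(z))$ at $t=0$ gives $\frac{d}{dt}\psi_t(z)|_{t=0}=1-z$, so the generator acts as $(1-z)f'$. To get the full domain, the cleanest route is the standard one used for \Cref{lemma:semigroup_T_Hardy}. On one hand, if $f\in D(A_{D^p})$, then convergence in $D^p$ implies pointwise convergence, so $A_{D^p}f=(1-z)f'$. On the other hand, suppose $f\in D^p$ with $(1-z)f'\in D^p$. We write
\[
T(t)f-f=\int_0^t T(s)\bigl((1-(\cdot))f'\bigr)\,ds,
\]
which holds pointwise because $\frac{d}{ds}f(\psi_s(z))=(1-\psi_s(z))f'(\psi_s(z))$. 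The right-hand side is a $D^p$-valued Bochner integral by strong continuity. Dividing by $t$ and letting $t\to0^+$ shows $f\in D(A_{D^p})$, and this determines the stated domain.
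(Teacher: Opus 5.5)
Your proposal is correct and follows the paper's proof step by step. Both use the three-case point-evaluation bound at $\psi_t(0)=1-e^{-t}$, the derivative estimate $\norm{(T(t)f)'}_{A^p}\le e^{(2/p-1)t}\norm{f'}_{A^p}$, \Cref{prop:condition_semigroup_is_strongly} for strong continuity, and invariance of the constants for $\omega_0\geq0$. The only difference is the generator: the paper cites \cite[Theorem 2.1]{Gallardo-GutierrezSiskakisYakubovich2023}, whereas you verify the domain directly from $T(t)f-f=\int_0^t T(s)\bigl((1-(\cdot))f'\bigr)\,ds$ and boundedness of point evaluations, which is the standard argument behind that citation.
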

\begin{proof}
We divide the proof into three cases, using the pointwise estimates in \Cref{subsec:space_analytic_functions}. Since $\psi_t(0)=1-e^{-t}$, they give
\[
    |f(\psi_t(0))|\lesssim
    \begin{cases}
        e^{(2/p-1)t}\norm{f}_{D^p},&p<2,\\
        (1+\sqrt t)\norm{f}_{D^2},&p=2,\\
        \norm{f}_{D^p},&p>2.
    \end{cases}
\]
Also, a change of variable in the area integral gives
\[
    \norm{(T(t)f)'}_{A^p}=e^{-t}\norm{f'\circ\psi_t}_{A^p}
    \leq e^{(2/p-1)t}\norm{f'}_{A^p}.
\]
Consequently,
\begin{equation}\label{eq:bound_semigroup_Ap}
    \norm{T(t)}_{D^p\to D^p}\lesssim
    \begin{cases}
        e^{(2/p-1)t},&p<2,\\
        1+\sqrt t,&p=2,\\
        1,&p>2.
    \end{cases}
\end{equation}
Polynomials are dense in $D^p$, so \Cref{prop:condition_semigroup_is_strongly} gives strong continuity. The estimates imply
\begin{equation}\label{eq:growth_bound}
    \omega_0\leq\max\{2/p-1,0\}.
\end{equation}
Since $T(t)$ fixes the constants, $\omega_0\geq0$, giving equality when $p\geq2$. Finally, the description of the generator follows from \cite[Theorem 2.1]{Gallardo-GutierrezSiskakisYakubovich2023}.
\end{proof}

\begin{lemma}\label{lemma:point_spectrum_A_Dirichlet}
For $1\leq p<\infty$,
\[
    \sigma_{point}(A_{D^p})=\{\lambda\in\C:\Real\lambda<2/p-1\}\cup\{0\}.
\]
Each eigenspace is one-dimensional and generated by $f_\lambda(z)=(1-z)^{-\lambda}$.
\end{lemma}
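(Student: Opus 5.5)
Since $A_{D^p}$ acts by the differential expression $f\mapsto (1-z)f'$, the whole problem reduces to an ODE. If $f\in D(A_{D^p})$ satisfies $(1-z)f'=\lambda f$, then $f$ solves a first-order linear ODE on $\D$. Its solutions in $Hol(\D)$ are exactly the multiples of $f_\lambda(z)=(1-z)^{-\lambda}$; for $\lambda=0$ these are the constants. So each eigenspace is at most one-dimensional and spanned by $f_\lambda$. It remains to decide for which $\lambda$ the function $f_\lambda$ lies in $D^p$, and then that $A_{D^p}f_\lambda=\lambda f_\lambda\in D^p$, which is automatic once $f_\lambda\in D^p$. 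By \Cref{lemma:semigroup_T_Dirichlet}, membership in the domain is then immediate.

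The case $\lambda=0$ is trivial: the constants lie in $D^p$ and are annihilated by $A_{D^p}$. For $\lambda\neq0$, we have
\[
f_\lambda'(z)=\lambda(1-z)^{-\lambda-1},
\]
so $f_\lambda\in D^p$ if and only if $(1-z)^{-\lambda-1}\in A^p$. We also have $|(1-z)^{-\lambda-1}|\asymp|1-z|^{-\Real\lambda-1}$ on $\D$, because $|\operatorname{Arg}(1-z)|<\pi/2$ bounds the factor $e^{\Imag\lambda\operatorname{Arg}(1-z)}$ above and below. The function $|1-z|^{-s}$ is $p$-integrable on $\D$ with respect to area measure exactly when $ps<2$; this is the standard polar-coordinates computation about the point $1$. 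With $s=\Real\lambda+1$, the condition $ps<2$ reads $\Real\lambda<2/p-1$. For $\Real\lambda\geq 2/p-1$ with $\lambda\neq0$ the integral diverges, so $f_\lambda\notin D^p$. Note also that when $\Real\lambda\le -1$ we get $s\le 0$, the integrand is bounded, and $f_\lambda$ is trivially in $D^p$.

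Hence, for $\lambda\ne0$, $\lambda$ is an eigenvalue precisely when $\Real\lambda<2/p-1$, and each eigenspace is spanned by $f_\lambda$. This gives the claimed set once $\{0\}$ is adjoined; for $p<2$ the point $0$ already belongs to the half-plane. The only step needing any care is the two-sided integrability criterion for $|1-z|^{-s}$ on $\D$. To handle it, I would compare $\D$ near $1$ with a half-disc centred at $1$: the region $\{|1-z|<r\}\cap\D$ contains a sector of fixed opening at $1$ and is contained in a half-disc. In polar coordinates centred at $1$ this gives $\int_0^{1} \rho^{-ps}\rho\,d\rho$ up to constants, which is finite iff $ps<2$.
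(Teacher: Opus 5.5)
Your proof is correct and follows the paper's argument. You solve $(1-z)f'=\lambda f$ to get $f=C(1-z)^{-\lambda}$ and reduce membership in $D^p$ for $\lambda\neq0$ to $(1-z)^{-\lambda-1}\in A^p$, i.e.\ $\Real\lambda<2/p-1$, with the only difference being that you verify this integrability criterion directly by polar coordinates about $1$, whereas the paper cites Duren--Schuster for it.
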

\begin{proof}
Solving $(1-z)f'(z)=\lambda f(z)$ gives $f(z)=C(1-z)^{-\lambda}$. For $\lambda=0$ this is a constant and belongs to $D^p$. For $\lambda\neq0$, it belongs to $D^p$ if and only if $(1-z)^{-\lambda-1}\in A^p$, that is, $\Real\lambda<2/p-1$, see \cite[Section 3.2]{DurenSchuster2004}.
\end{proof}

If $p>2$, integrating \eqref{eq:point_evaluation_Bergman} along radii shows that every $f\in D^p$ has a finite radial limit at $1$. In particular, $f\mapsto f(1)$ is a bounded functional on $D^p$.

\begin{lemma}\label{lemma:range_A_Dirichlet}
Let $1\leq p<\infty$. Then:
\begin{enumerate}[(i)]
    \item $\ran(\lambda I-A_{D^p})=D^p$ if $\Real\lambda<2/p-1$, or if $\Real\lambda>2/p-1$ and $\lambda\neq0$;
    \item $\ran(A_{D^p})=\{f\in D^p:f(1)=0\}$ if $p>2$;
    \item $\ran(\lambda I-A_{D^p})$ is not closed if $\Real\lambda=2/p-1$.
\end{enumerate}
\end{lemma}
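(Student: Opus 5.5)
The plan is to transfer the problem to the derivative level and reduce it to the Bergman-space results of \Cref{lemma:semigroup_T_Bergman,lemma:spectrum_A_Bergman}. The key identity is that $g\mapsto g'$ intertwines the two generators up to a shift: if $Ag=(1-z)g'$, then $(Ag)'=(1-z)g''-g'$. Hence
\[
    \bigl((\lambda I-A)g\bigr)'=\bigl((\lambda+1)I-A\bigr)g'.
\]
Given $f\in D^p$, the equation $(\lambda I-A_{D^p})g=f$ therefore amounts to solving $((\lambda+1)I-A_{A^p})h=f'$ with $h\in A^p$, setting $g=\int_0^z h+c$, and choosing the constant $c$ so that the value at $0$ matches. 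Evaluating the original equation at $z=0$ gives $\lambda g(0)-g'(0)=f(0)$, and since $g'(0)=h(0)$ this yields $\lambda c=f(0)+h(0)$. This determines $c$ when $\lambda\neq0$.

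\emph{Part (i).} If $\Real\lambda>2/p-1$, then $\Real(\lambda+1)>2/p$, so $(\lambda+1)I-A_{A^p}$ is invertible by \Cref{lemma:spectrum_A_Bergman}. We then obtain $h$, and hence $g$, provided $\lambda\neq0$. If $\Real\lambda<2/p-1$, we need surjectivity of $\mu I-A_{A^p}$ for $\Real\mu<2/p$. This is the Bergman analogue of \Cref{lemma:ess_spectrum_A1}. I would prove it by repeating \Cref{lemma:ess_spec1,lemma:ess_spec2} verbatim on $A^p$: use the growth bound $e^{2t/p}$ and the pointwise estimate \eqref{eq:point_evaluation_Bergman}, and control the weighted composition $\phi_t$ via $(\phi_t')^{2/p}$ in place of $(\phi_t')^{1/p}$. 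The binomial splitting \eqref{eq:f_decomposition} then applies. In this case $\lambda$ may be $0$ (this happens when $p<2$), so $c$ is not forced. Instead we use the freedom in $h$ coming from the kernel element $(1-z)^{-\lambda-1}$ of $(\lambda+1)I-A_{A^p}$. Adding a multiple of it changes $h(0)$ and lets us satisfy $f(0)+h(0)=0$. This is the step I expect to be the main technical obstacle: checking that the Bergman version of \Cref{lemma:ess_spec2} goes through with exponent $m>4/p-\Real\mu$.

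\emph{Part (ii).} Let $p>2$ and $\lambda=0$. Here $-1<0$ fails to equal $2/p-1$ only in the sense that $\Real 1=1>2/p$, so $h$ with $(I-A_{A^p})h=f'$ is unique. The equation $Ag=f$ with $g'=h$ holds exactly when $f(0)=-h(0)$ at $z=0$, which is a single linear condition. I would show that this condition is $f(1)=0$. Write $(1-z)g'=f$, so $g'=f/(1-z)$ must lie in $A^p$. Using the resolvent formula $h=\int_0^\infty e^{-t}T(t)f'\,dt$ and integrating, one gets $\int_0^z h=-\int_0^\infty(f(\psi_t(z))-f(\psi_t(0)))\,dt$. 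Then $h(0)+f(0)$ equals $\int_0^\infty e^{-t}f'(\psi_t(0))\,dt+f(0)=f(1)$, since $\frac{d}{dt}f(\psi_t(0))=e^{-t}f'(\psi_t(0))$ and $\psi_t(0)\to1$, with the boundary value existing by the remark preceding the lemma. So the range is exactly $\{f(1)=0\}$, which is closed of codimension one.

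\emph{Part (iii).} For $\Real\lambda=2/p-1$, I would argue as in the end of \Cref{lemma:ess_spectrum_A1}. Suppose the range were closed. Kernel considerations from \Cref{lemma:point_spectrum_A_Dirichlet} would then make $\lambda I-A_{D^p}$ semi-Fredholm. Since nearby points on the left have index $1$ (surjective with one-dimensional kernel) and nearby points on the right have index $0$ (or $-1$ near $0$ when $p>2$), this contradicts local constancy of the index on the semi-Fredholm set. The one subtle case is $\lambda=0$ with $p=2$, where both the kernel and the cokernel data must be checked. There I would use the same index jump across the line.
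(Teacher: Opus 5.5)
Your proposal is correct, but it takes a different route from the paper's proof. The main difference is in (i) for $\Real\lambda<2/p-1$. The paper works directly in $D^p$ there: it reruns \Cref{lemma:ess_spec1} with the $D^p$ bounds \eqref{eq:bound_semigroup_Ap}, and it controls the $\Lambda_{0,\lambda}$ part with the Dirichlet weighted-composition estimate of \Cref{lemma:dirichlet_growth_bound}. It passes to derivatives only for the right half-plane, through the quotient $D_0^p\cong D^p/\C$ and the similarity of $A_{D_0^p}$ with $A_{A^p}-I$. You use the intertwining $((\lambda I-A)g)'=((\lambda+1)I-A)g'$ throughout. With $\mu=\lambda+1$, this replaces the appendix estimate by the simpler Bergman bound
$\norm{\phi_t^{m+1}(\phi_t')^{-2/p}((1-\phi_t)/(1-(\cdot)))^{\mu}}_\infty\lesssim e^{(\Real\mu-2/p)t}$.
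That bound holds once $m+1+\Real\mu-4/p>0$, so your $m>4/p-\Real\mu$ is more than enough. The price is tracking the integration constant through $\lambda c=f(0)+h(0)$. When $\lambda=0$ you handle it with the kernel vector $(1-z)^{-\lambda-1}$, which works. As a by-product you also get surjectivity of $\mu I-A_{A^p}$ for $\Real\mu<2/p$.

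For (ii), the paper only shows $\ran(A_{D^p})\subseteq\{f(1)=0\}$, using $(1-r)g'(r)\to0$, and then concludes by comparing codimensions. Your evaluation $h(0)+f(0)=\int_0^1 f'(r)\,dr+f(0)=f(1)$ characterizes the range exactly.

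For (iii), the paper uses that boundary points of the spectrum are approximate eigenvalues, and needs a separate quotient argument for $p=2$, $\lambda=0$. Your semi-Fredholm stability argument (index $1$ on the left, $0$ on the right) treats every point of the line uniformly, including that case, because the kernel is always finite-dimensional.

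There are two harmless slips. The aside formula for $\int_0^z h$ should have a plus sign. For $p>2$ the index of $A_{D^p}$ at $0$ is $0$, not $-1$; this is irrelevant anyway, since $0$ lies at distance $1-2/p$ from the line.
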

\begin{proof}
Assume first that $\Real\lambda<2/p-1$. The description of the range in \Cref{remark:range_A} applies equally to $D^p$. Choose an integer $\delta>\max\{2/p-1,0\}-\Real\lambda$. Repeating the proof of \Cref{lemma:ess_spec1}, using \eqref{eq:bound_semigroup_Ap}, gives
\[
    \left\{f\in D^p:\frac{f}{(1-(\cdot))^\delta}\in D^p\right\}
    \subseteq\ran(\lambda I-A_{D^p}).
\]
In the case $p=2$, the additional factor $1+\sqrt t$ is integrable against the resulting decaying exponential.

Next, let $m\in\N$ with $m>2/p-\Real\lambda$ and take $f\in D^p$ with a zero of order at least $m$ at $0$. As before,
\[
    -(\Lambda_{0,\lambda}f)(z)=\int_0^\infty
    \phi_t(z)\left(\frac{1-\phi_t(z)}{1-z}\right)^\lambda f(\phi_t(z))\,dt.
\]
The auxiliary estimate in \Cref{lemma:dirichlet_growth_bound} gives exponential decay with exponent $\Real\lambda+1-2/p<0$. The integrand is strongly continuous by polynomial approximation and the same local bounds. Hence, $\Lambda_{0,\lambda}f\in D^p$, and $f\in\ran(\lambda I-A_{D^p})$. Finally, the polynomial decomposition \eqref{eq:f_decomposition}, with $n$ sufficiently large, writes every $f\in D^p$ as a sum of functions of these two types. This proves surjectivity in the left half-plane.

For the opposite half-plane, let $D_0^p=\{f\in D^p:f(0)=0\}$, which is isometrically isomorphic to the quotient $D^p/\C$. Since constants are invariant, the quotient semigroup is represented on $D_0^p$ by
\[
    T_0(t)f=T(t)f-(T(t)f)(0).
\]
Let $A_{D_0^p}$ denote the generator of $(T_0(t))_{t\geq0}$ on $D_0^p$.
Differentiation is an isometric isomorphism from $D_0^p$ onto $A^p$, and
\[
    (T_0(t)f)'=e^{-t}T(t)f'.
\]
Thus, $A_{D_0^p}$ is similar to $A_{A^p}-I$. By \Cref{lemma:spectrum_A_Bergman}, $\lambda I-A_{D_0^p}$ is invertible when $\Real\lambda>2/p-1$. Since $A_{D^p}$ vanishes on constants, $\lambda I-A_{D^p}$ is then invertible for $\lambda\neq0$. If $p>2$ and $\lambda=0$, its kernel is $\C$ and its range has codimension one; see also the quotient construction in \cite[Section II.2.4]{EngelNagel2000}.

For $p>2$ and $g\in D(A_{D^p})$, estimate \eqref{eq:point_evaluation_Bergman} gives
\[
    |(1-r)g'(r)|\lesssim(1-r)^{1-2/p}\norm{g'}_{A^p}\longrightarrow0.
\]
Therefore, $\ran(A_{D^p})\subseteq\{f:f(1)=0\}$. Both spaces have codimension one, proving (ii).

Finally, each point on $\Real\lambda=2/p-1$ is a boundary point of the spectrum and Hence,  an approximate eigenvalue. If $\lambda\neq0$, it is not an eigenvalue by \Cref{lemma:point_spectrum_A_Dirichlet}, so the range is not closed. When $p=2$ and $\lambda=0$, the similarity above and \Cref{lemma:spectrum_A_Bergman} show that $0$ is a boundary point, but not an eigenvalue, of $A_{D_0^2}$. Thus, its range is not closed. Its inverse image under the quotient map is $\C+\ran(A_{D^2})$, so $\ran(A_{D^2})$ cannot be closed either.
\end{proof}

\begin{remark}\label{rem:surjectivity_A}
The resolvent in the case $p>2$ also has a useful explicit expression. If $\Real\lambda>0$, the semigroup formula gives
\[
    ((\lambda I-A_{D^p})^{-1}f)(z)=\int_0^\infty e^{-\lambda t}(T(t)f)(z)\,dt=\frac{1}{(1-z)^\lambda}\int_z^1(1-\xi)^{\lambda-1}f(\xi)\,d\xi.
\]
For $\Real\lambda>2/p-1$ and $\lambda\neq0$, integration by parts suggests the expression
\[
    g_\lambda(z)=\frac{f(z)}{\lambda}
    +\frac{1}{\lambda(1-z)^\lambda}\int_z^1 f'(\xi)(1-\xi)^\lambda\,d\xi.
\]
It remains valid when $2/p-1<\Real\lambda\leq0$. Indeed, along a non-tangential path to $1$,
\[
    |f'(\xi)(1-\xi)^\lambda|\lesssim
    |1-\xi|^{\Real\lambda-2/p}\norm{f'}_{A^p},
\]
so the integral converges. Differentiating gives
\[
    g_\lambda'(z)=\frac{1}{(1-z)^{\lambda+1}}
    \int_z^1f'(\xi)(1-\xi)^\lambda\,d\xi
    =\int_0^\infty e^{-(\lambda+1)t}f'(\psi_t(z))\,dt.
\]
Consequently,
\[
    \norm{g_\lambda'}_{A^p}\leq\norm{f'}_{A^p}
    \int_0^\infty e^{(2/p-\Real\lambda-1)t}\,dt<\infty.
\]
Thus, $g_\lambda\in D^p$ and $\lambda g_\lambda-(1-z)g_\lambda'=f$, proving the resolvent formula directly.
\end{remark}

\begin{lemma}\label{lemma:spectrum_A_Dirichlet}
For $1\leq p<\infty$, the extended spectrum and extended essential spectrum of $A_{D^p}$ are
\[
    \tilde\sigma(A_{D^p})=\{\lambda\in\C:\Real\lambda\leq2/p-1\}\cup\{0,\infty\},
\]
\[
    \tilde\sigma_{ess}(A_{D^p})=\{\lambda\in\C:\Real\lambda=2/p-1\}\cup\{\infty\}.
\]
Moreover,
\[
    \operatorname{Ind}(\lambda I-A_{D^p})=
    \begin{cases}
        1,&\Real\lambda<2/p-1,\\
        0,&\Real\lambda>2/p-1.
    \end{cases}
\]
\end{lemma}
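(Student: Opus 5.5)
The plan is to assemble \Cref{lemma:spectrum_A_Dirichlet} directly from \Cref{lemma:point_spectrum_A_Dirichlet} and \Cref{lemma:range_A_Dirichlet}, treating the half-planes $\Real\lambda<2/p-1$ and $\Real\lambda>2/p-1$, the line $\Real\lambda=2/p-1$, the special point $\lambda=0$, and the point $\infty$ separately.

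\emph{Left half-plane.} Take $\Real\lambda<2/p-1$. By \Cref{lemma:range_A_Dirichlet}(i), $\lambda I-A_{D^p}$ is surjective. By \Cref{lemma:point_spectrum_A_Dirichlet}, its kernel is spanned by $(1-z)^{-\lambda}$. Hence it is Fredholm, of index $1$, and not invertible. If $p\ge 2$, the point $0$ lies in this open half-plane (or on its boundary line when $p=2$) and is already included. If $p<2$, then $0$ lies in the right half-plane, but it is an eigenvalue because constants are killed by $A_{D^p}$. This accounts for the extra point $0$ in $\tilde\sigma(A_{D^p})$.

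\emph{Right half-plane.} Take $\Real\lambda>2/p-1$ with $\lambda\ne0$. Then $\lambda I-A_{D^p}$ is surjective by \Cref{lemma:range_A_Dirichlet}(i) and injective by \Cref{lemma:point_spectrum_A_Dirichlet}. Since $A_{D^p}$ is closed, it is invertible, and in particular Fredholm of index $0$.

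It remains to examine $\lambda=0$ when $0$ lies in this half-plane, which happens exactly when $p>2$ (for $p<2$ we have $2/p-1>0$). In that case the kernel is $\C$, and by \Cref{lemma:range_A_Dirichlet}(ii) the range $\{f:f(1)=0\}$ is closed of codimension one. So $A_{D^p}$ is Fredholm of index $0$, and $0$ is in the spectrum but not in the essential spectrum. The case $p<2$ is the same, except that the range must now be identified. The similarity of $A_{D_0^p}$ with $A_{A^p}-I$ (from the proof of \Cref{lemma:range_A_Dirichlet}), together with the $1$-dimensional quotient by the constants, gives a finite-codimensional closed range, and the index is $0$ by local constancy of the index on the connected set $\{\Real\lambda>2/p-1\}$. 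The other points of that set are invertible with index $0$, and the Fredholm domain is open, which justifies this step. I expect this bookkeeping at $\lambda=0$ to be the only delicate point.

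\emph{The line and infinity.} On $\Real\lambda=2/p-1$, the range is not closed by \Cref{lemma:range_A_Dirichlet}(iii), so these points lie in both the spectrum and the essential spectrum. This is also forced by the index jumping from $1$ to $0$ across the line. Finally, $A_{D^p}$ is unbounded, so $\infty\in\tilde\sigma(A_{D^p})$. Moreover, $D(A_{D^p})$ has infinite codimension: it is a dense proper subspace, hence not closed, and therefore not of finite codimension. Thus $\infty\in\tilde\sigma_{ess}(A_{D^p})$; alternatively, this follows because the extended essential spectrum is closed in the Riemann sphere and contains an unbounded line. Collecting the cases gives both displayed formulas and the index formula.
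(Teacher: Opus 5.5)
Your overall route is the paper's: the finite parts of both sets and the index are read off from \Cref{lemma:point_spectrum_A_Dirichlet} and \Cref{lemma:range_A_Dirichlet}, and $\infty$ is added by closedness in the Riemann sphere. However, your treatment of $\lambda=0$, the step you flagged as delicate, has the geometry reversed. We have $2/p-1>0$ exactly when $p<2$. So $0$ lies in the open left half-plane $\{\Real\lambda<2/p-1\}$ for $p<2$, on the line for $p=2$, and in the right half-plane only for $p>2$. Your first paragraph states the opposite for both $p<2$ and $p>2$.

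Consequently your sub-case ``the case $p<2$ is the same \dots\ the index is $0$ by local constancy on $\{\Real\lambda>2/p-1\}$'' is false, since $0$ is not in that set. In fact $0\cdot I-A_{D^p}$ is surjective by \Cref{lemma:range_A_Dirichlet}(i). Explicitly, for $p<2$ we have $-\log(1-z)\in D(A_{D^p})$ and $A_{D^p}(-\log(1-z))=1$, so even the constants lie in the range. The kernel is the constants, so $\operatorname{Ind}(0\cdot I-A_{D^p})=1$, as the statement requires. As written, your proof asserts index $1$ at $\lambda=0$ for $p<2$ in the left half-plane paragraph and index $0$ in the right half-plane paragraph.

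The fix is to delete that sub-case. For $p<2$, $\lambda=0$ needs no special treatment: the left half-plane argument covers it, with eigenfunction $(1-z)^{-0}=1$. For $p=2$ it is handled by (iii). Only for $p>2$ is $0$ an isolated extra point, and there your use of (ii) to get a Fredholm operator of index $0$ is correct.

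A minor point concerns $\infty$. Your inference that $D(A_{D^p})$, being dense and not closed, cannot have finite codimension is not valid for arbitrary subspaces: kernels of discontinuous functionals are dense and have codimension one. For the domain of a closed operator the conclusion does hold, via the open mapping theorem applied to $D(A)_{\mathrm{graph}}\oplus F\to X$. In any case, your alternative argument via closedness of the extended essential spectrum, which is the paper's, already suffices.
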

\begin{proof}
The finite-plane spectral sets and the index follow from \Cref{lemma:point_spectrum_A_Dirichlet,lemma:range_A_Dirichlet}. The extended sets contain $\infty$ because their finite parts are unbounded and the extended sets are closed in the Riemann sphere.
\end{proof}

\begin{remark}
For $1\leq p<2$, the preceding lemma shows that the growth bound of $(T(t))_{t\geq0}$ on $D^p$ is $\omega_0=2/p-1$, improving the inequality in \Cref{lemma:semigroup_T_Dirichlet}.
\end{remark}

The corresponding results for $U(t)$ on $A^p$ follow from \Cref{remark:similarity} and the isomorphism \eqref{eq:isomorphism_A1_D1}.

\begin{lemma}\label{lemma:semigroup_U_Bergman}
For $1\leq p<\infty$, $(U(t))_{t\geq0}$ is a strongly continuous semigroup on $A^p$, with growth bound $\omega_0=\max\{2/p-1,0\}$. Its generator is
\begin{equation}\label{eq:generator_U_Bergman}
    B_{A^p}f(z)=\frac{1}{z^2}\int_0^z f(\xi)\,d\xi-\frac{f(z)}{z}+(1-z)f'(z),
\end{equation}
with domain $D(B_{A^p})=\{f\in A^p:B_{A^p}f\in A^p\}$. The singularity at $0$ is removable.
\end{lemma}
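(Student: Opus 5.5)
The plan is to transfer everything from $(T(t))_{t\geq0}$ on $D^p$ to $(U(t))_{t\geq0}$ on $A^p$ through the isomorphism $J:D^p\to A^p$ of \eqref{eq:isomorphism_A1_D1}, and then to compute the generator explicitly. First I will check that $U(t)=JT(t)J^{-1}$ holds on $Hol(\D)$. For $f\in Hol(\D)$ put $F=J^{-1}f$, so $F(w)=\frac1w\int_0^w f(\xi)\,d\xi$. Then
\[
\bigl(JT(t)F\bigr)(z)=\frac{d}{dz}\bigl(zF(\psi_t(z))\bigr)=\frac{d}{dz}\left(\frac{z}{\psi_t(z)}\int_0^{\psi_t(z)}f(\xi)\,d\xi\right),
\]
which is exactly \eqref{eq:semigroup_U}. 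The quotient is analytic because $\int_0^w f$ vanishes at $w=0$, so $F$ is entire in this sense. By \Cref{lemma:semigroup_T_Dirichlet}, $(T(t))$ is a strongly continuous semigroup on $D^p$. \Cref{remark:similarity} then shows that $(U(t))$ is a strongly continuous semigroup on $A^p$ with the same growth bound, and that its generator is $B_{A^p}=JA_{D^p}J^{-1}$ with domain $J(D(A_{D^p}))$.

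For the growth bound I will combine two facts. \Cref{lemma:semigroup_T_Dirichlet} gives $\omega_0=0$ when $p\geq 2$. For $p<2$ the subsequent remark gives $\omega_0=2/p-1$, obtained from \Cref{lemma:spectrum_A_Dirichlet} and the inequality between the spectral bound and the growth bound. Together these give $\omega_0=\max\{2/p-1,0\}$.

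For the formula \eqref{eq:generator_U_Bergman}, I will take $f\in A^p$ with $F=J^{-1}f\in D(A_{D^p})$ and compute $J\bigl((1-z)F'\bigr)=\bigl(z(1-z)F'\bigr)'$. From $zF=\int_0^z f$ we get
\[
F'(z)=\frac{f(z)}{z}-\frac{1}{z^2}\int_0^z f(\xi)\,d\xi .
\]
Then
\[
\bigl(z(1-z)F'\bigr)'=\bigl((1-z)(f-F)\bigr)'=-(f-F)+(1-z)\bigl(f'-F'\bigr).
\]
Substituting the expression for $F'$, the terms combine to
\[
\frac{1}{z^2}\int_0^z f\,d\xi-\frac{f(z)}{z}+(1-z)f'(z).
\]
Near $0$, $\int_0^z f=f(0)z+f'(0)z^2/2+\dots$, so the first two terms together equal $-f'(0)/2+O(z)$. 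The singularity at $0$ is therefore removable.

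The only delicate point is the domain. Since $J$ is a bounded isomorphism and the computation above holds pointwise for all analytic $f$, the following statements are equivalent:
\begin{itemize}
\item[(a)] $F\in D(A_{D^p})$, that is, $(1-z)F'\in D^p$;
\item[(b)] $J\bigl((1-z)F'\bigr)\in A^p$;
\item[(c)] $B_{A^p}f\in A^p$, where $B_{A^p}f$ is the expression in \eqref{eq:generator_U_Bergman}.
\end{itemize}
The equivalence of (b) and (c) is exactly the pointwise identity. For the equivalence of (a) and (b), note that $J^{-1}$ is defined on all of $Hol(\D)$ and maps $A^p$ onto $D^p$. Using the domain description in \Cref{lemma:semigroup_T_Dirichlet}, this yields $D(B_{A^p})=\{f\in A^p:B_{A^p}f\in A^p\}$.
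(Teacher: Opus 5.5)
Your proposal is correct and follows the paper's route. The paper justifies this lemma only by invoking \Cref{remark:similarity} with the isomorphism $J:D^p\to A^p$; you supply the details it leaves implicit: the identity $U(t)=JT(t)J^{-1}$, the growth bound from \Cref{lemma:semigroup_T_Dirichlet} and the subsequent remark, and the explicit computation of $JA_{D^p}J^{-1}$ together with its domain. One minor slip in wording: $F=J^{-1}f$ is analytic on $\D$ (including at $0$), not entire.
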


\begin{lemma}\label{lemma:spectrum_U_Bergman}
For $1\leq p<\infty$,
\[
    \sigma_{point}(B_{A^p})=\{\lambda\in\C:\Real\lambda<2/p-1\}\cup\{0\}.
\]
Each eigenspace is one-dimensional and generated by
\[
    g_\lambda(z)=\frac{1+(\lambda-1)z}{(1-z)^{\lambda+1}}.
\]
Also, $\lambda I-B_{A^p}$ is surjective if $\Real\lambda<2/p-1$, or if $\Real\lambda>2/p-1$ and $\lambda\neq0$. If $p>2$, then
\[
    \ran(B_{A^p})=\left\{f\in A^p:\int_0^1f(\xi)\,d\xi=0\right\}.
\]
The range of $\lambda I-B_{A^p}$ is not closed on $\Real\lambda=2/p-1$. Consequently,
\[
    \tilde\sigma(B_{A^p})=\{\lambda\in\C:\Real\lambda\leq2/p-1\}\cup\{0,\infty\},
\]
\[
    \tilde\sigma_{ess}(B_{A^p})=\{\lambda\in\C:\Real\lambda=2/p-1\}\cup\{\infty\},
\]
and
\[
    \operatorname{Ind}(\lambda I-B_{A^p})=
    \begin{cases}
        1,&\Real\lambda<2/p-1,\\
        0,&\Real\lambda>2/p-1.
    \end{cases}
\]
\end{lemma}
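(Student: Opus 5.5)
The plan is to transfer everything from $A_{D^p}$ to $B_{A^p}$ through the similarity $J$ of \Cref{remark:similarity}. By \eqref{eq:isomorphism_A1_D1}, $J:D^p\to A^p$ is a bounded isomorphism. \Cref{lemma:semigroup_U_Bergman} identifies $B_{A^p}$ as the generator of $U(t)=JT(t)J^{-1}$, so $B_{A^p}=JA_{D^p}J^{-1}$ with $D(B_{A^p})=J(D(A_{D^p}))$. Since similarity preserves kernels, ranges, closedness of ranges, codimensions and index, the statements on point spectrum, surjectivity, non-closed range on $\Real\lambda=2/p-1$, extended spectrum, extended essential spectrum and index follow at once. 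They are read off from \Cref{lemma:point_spectrum_A_Dirichlet}, \Cref{lemma:range_A_Dirichlet} and \Cref{lemma:spectrum_A_Dirichlet}. For the extended sets one should note that $B_{A^p}$ is unbounded, because $A_{D^p}$ is. One should also note that $D(B_{A^p})$ has infinite codimension exactly when $D(A_{D^p})$ does, since $J$ is an isomorphism of the ambient spaces.

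Two points need explicit computation. The first is the eigenfunctions. The eigenspace of $B_{A^p}$ at $\lambda$ is $J$ applied to the span of $f_\lambda(z)=(1-z)^{-\lambda}$. I would compute
\[
J f_\lambda(z)=\bigl(z(1-z)^{-\lambda}\bigr)'=(1-z)^{-\lambda}+\lambda z(1-z)^{-\lambda-1}=\frac{1-z+\lambda z}{(1-z)^{\lambda+1}},
\]
which is $g_\lambda$. For $\lambda=0$ this gives $J1=1=g_0$, as it should.

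The second point is the range of $B_{A^p}$ at $\lambda=0$ for $p>2$. By \Cref{lemma:range_A_Dirichlet}(ii), $\ran(B_{A^p})=J(\{h\in D^p:h(1)=0\})$. So $f\in A^p$ lies in the range iff $h=J^{-1}f$ satisfies $h(1)=0$. We have $h(r)=\frac1r\int_0^r f(\xi)\,d\xi$, and $h$ has a finite radial limit at $1$ when $p>2$. Hence $h(1)=\lim_{r\to1^-}\int_0^r f(\xi)\,d\xi$.

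The main obstacle is justifying that $\int_0^1 f(\xi)\,d\xi$ makes sense for every $f\in A^p$ and equals $h(1)$. For this I would use \eqref{eq:point_evaluation_Bergman}, which gives $|f(r)|\le(1-r^2)^{-2/p}\norm{f}_{A^p}$. This bound is integrable on $[0,1)$ precisely because $2/p<1$, so the integral converges absolutely. The integral is then a bounded linear functional on $A^p$, and it coincides with $h\mapsto h(1)$ transported by $J$. The identification of the range as the kernel of this functional then follows.
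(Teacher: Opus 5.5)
Your proposal is correct and follows the paper's route: the paper's only justification for this lemma is that it follows from the similarity $B_{A^p}=JA_{D^p}J^{-1}$ of \Cref{remark:similarity} together with the $D^p$ results. Your computation $Jf_\lambda=g_\lambda$ and your identification of $J(\{h\in D^p:h(1)=0\})$ with $\{f\in A^p:\int_0^1 f(\xi)\,d\xi=0\}$, using the integrable bound $(1-r^2)^{-2/p}$ for $p>2$, supply the details the paper leaves implicit.
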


We now identify the matrix operator $\BB_\mu$ with the integral operator $U_\mu$, defined in~\eqref{eq:Bmu-definition} and~\eqref{eq:integral_U}, respectively, beginning with polynomials.

\begin{lemma}\label{lemma:B_equals_U_on_polynomials}
Let $\mu$ be a finite positive Borel measure on $(0,1]$. If $f$ is a polynomial, then $\BB_\mu f=U_\mu f$.
\end{lemma}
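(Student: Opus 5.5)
By linearity of both $\BB_\mu$ and $U_\mu$ in $f$, it suffices to verify the identity on the monomials $f(z)=z^n$, $n\geq0$. The plan is to compute $U(-\log t)(z^n)$ explicitly as a polynomial in $z$ whose coefficients are polynomials in $t$, integrate it term by term against $\mu$, and compare the result with the definition \eqref{eq:Bmu-definition}.

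\emph{Computing $U(-\log t)(z^n)$.} The proof of \Cref{prop:condition_semigroup_is_strongly} already records
\[
U(s)(z^n)=\frac{1}{n+1}\frac{d}{dz}\bigl(z\psi_s(z)^n\bigr).
\]
This follows from \eqref{eq:semigroup_U}, since $\int_0^{\psi_s(z)}\xi^n\,d\xi=\psi_s(z)^{n+1}/(n+1)$; in particular the quotient by $\psi_s$ is harmless here. With $s=-\log t$ we have $\psi_s(z)=tz+1-t$. The binomial theorem gives
\[
z\psi_s(z)^n=\sum_{k=0}^n\binom{n}{k}t^k(1-t)^{n-k}z^{k+1},
\]
and differentiating yields
\[
U(-\log t)(z^n)=\frac{1}{n+1}\sum_{k=0}^n (k+1)\binom{n}{k}t^k(1-t)^{n-k}z^k.
\]

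\emph{Integrating against $\mu$.} For fixed $z\in\D$, the integrand in \eqref{eq:integral_U} is a finite sum of bounded continuous functions of $t\in(0,1]$. Since $\mu$ is finite, we may integrate term by term. Using \eqref{entries}, we obtain
\[
U_\mu(z^n)(z)=\sum_{k=0}^n\frac{(k+1)\,c_{n,k}}{n+1}\,z^k.
\]

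\emph{Comparing with $\BB_\mu$.} For $f(z)=z^n$, the Taylor coefficients are $a_m=\delta_{mn}$. In \eqref{eq:Bmu-definition}, the inner sum $\sum_{m\geq k}c_{m,k}a_m/(m+1)$ equals $c_{n,k}/(n+1)$ if $k\leq n$, and vanishes otherwise. Hence $\BB_\mu(z^n)$ is exactly the polynomial displayed above, and $\BB_\mu(z^n)=U_\mu(z^n)$. Linearity then gives the claim for all polynomials.

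There is no real obstacle in this argument. The only points requiring care are that no integrability condition on $\mu$ beyond finiteness is needed, because everything is a finite sum of bounded functions, and that the removable singularity in \eqref{eq:semigroup_U} disappears for monomials.
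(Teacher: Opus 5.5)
Your proof is correct and is essentially the paper's argument. The paper writes $U_\mu=JT_\mu J^{-1}$ on polynomials and composes $J^{-1}(z^n)=z^n/(n+1)$, $T_\mu(z^n)=\sum_{k=0}^n c_{n,k}z^k$ and $J(z^k)=(k+1)z^k$; your formula $U(s)(z^n)=\frac{1}{n+1}\frac{d}{dz}\bigl(z\psi_s(z)^n\bigr)$ is exactly $JT(s)J^{-1}(z^n)$ read off directly from the definition, so you avoid passing $J$ through the $\mu$-integral (an interchange that is trivial on polynomials anyway).
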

\begin{proof}
Integrating $U(t)=JT(t)J^{-1}$ against $\nu$ gives $U_\mu=JT_\mu J^{-1}$ on polynomials. Since $J(z^k)=(k+1)z^k$, $J^{-1}(z^n)=z^n/(n+1)$, and
\[
T_\mu(z^n)=\int_{(0,1]}(tz+1-t)^n\,d\mu(t)=\sum_{k=0}^n c_{n,k}z^k,
\]
it follows that
\[
U_\mu(z^n) = J T_\mu J^{-1} (z^n) = \frac{1}{n+1} \sum_{k=0}^n c_{n,k} J(z^k) = \frac{1}{n+1} \sum_{k=0}^n (k+1) c_{n,k}z^k =\BB_\mu(z^n).
\]
Then the result follows by linearity.
\end{proof}

\begin{lemma}\label{lemma:B_equals_U}
Let $\mu$ be a finite positive Borel measure on $(0,1]$. For $1\leq p<\infty$, $p\neq2$, the operator $U_\mu$ is bounded on $A^p$ if
\[
    \int_{(0,1]}t^{1-2/p}\,d\mu(t)<\infty.
\]
For $p=2$, a sufficient condition is
\[
    \int_{(0,1]}\left(1+\sqrt{\log(1/t)}\right)d\mu(t)<\infty.
\]
In these cases $\BB_\mu$ extends uniquely from polynomials to a bounded operator on $A^p$, and $\BB_\mu=U_\mu$. For $p>2$, every finite positive Borel measure satisfies the condition.
\end{lemma}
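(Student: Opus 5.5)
The plan is to run the argument of \Cref{lemma:A_equals_T} again, this time on $A^p$ with the semigroup $(U(t))_{t\geq0}$. The first step is a quantitative bound for $\norm{U(t)}_{A^p\to A^p}$. By \Cref{remark:similarity} and the isomorphism \eqref{eq:isomorphism_A1_D1} we have $U(t)=JT(t)J^{-1}$ on $A^p$, where $J:D^p\to A^p$ is a bounded isomorphism. Hence \eqref{eq:bound_semigroup_Ap} gives
\[
    \norm{U(t)}_{A^p\to A^p}\lesssim
    \begin{cases}
        e^{(2/p-1)t},&p<2,\\
        1+\sqrt t,&p=2,\\
        1,&p>2,
    \end{cases}
\]
with constants independent of $t$. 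In particular $U(t)$ is bounded on $A^p$ for each $t$, and by \Cref{lemma:semigroup_U_Bergman} the map $t\mapsto U(t)f$ is strongly continuous.

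Next I would show that $U_\mu$ is bounded. Put $\nu=\kappa(\mu)$ and define $U_\mu f=\int_{[0,\infty)}U(t)f\,d\nu(t)$ as a Bochner integral in $A^p$. The integral converges absolutely provided $\int\norm{U(t)}\,d\nu(t)<\infty$. Undoing the change of variables $t=-\log s$, this integral is controlled by $\int_{(0,1]}s^{1-2/p}\,d\mu(s)$ when $p<2$, by $\int_{(0,1]}(1+\sqrt{\log(1/s)})\,d\mu(s)$ when $p=2$, and by $\mu((0,1])$ when $p>2$. When $p>2$ the exponent satisfies $1-2/p>0$, so $s^{1-2/p}\leq1$ and finiteness of $\mu$ alone suffices, which gives the last sentence of the statement. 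This yields
\[
    \norm{U_\mu f}_{A^p}\lesssim\norm{f}_{A^p}\int_{(0,1]}w(s)\,d\mu(s),
\]
with the appropriate weight $w$ in each case.

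Finally, I would check that the Bochner integral agrees with the pointwise definition \eqref{eq:integral_U}. Point evaluations on $A^p$ are continuous by \eqref{eq:point_evaluation_Bergman}, so they commute with the Bochner integral. By \Cref{lemma:B_equals_U_on_polynomials}, $\BB_\mu=U_\mu$ on polynomials. Polynomials are dense in $A^p$ and $U_\mu$ is bounded, so $\BB_\mu$ extends uniquely to the bounded operator $U_\mu$.

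The main obstacle is the norm bound for $U(t)$, in particular making sure the constants are uniform in $t$ and that the identity $U(t)=JT(t)J^{-1}$ is legitimate on all of $A^p$ and not just on polynomials. I would justify this identity by checking directly from \eqref{eq:semigroup_U} that $U(t)Jg=JT(t)g$ for $g\in Hol(\D)$, and this holds since $J^{-1}f=\frac1z\int_0^z f$. For measurability of $t\mapsto U(t)f$, strong continuity suffices.
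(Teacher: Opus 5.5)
Your proposal is correct and follows essentially the same route as the paper. You transfer the bounds \eqref{eq:bound_semigroup_Ap} to $U(t)$ on $A^p$ via the similarity $U(t)=JT(t)J^{-1}$, integrate the operator norms against $\nu=\kappa(\mu)$ (turning $e^{(2/p-1)t}$ into $t^{1-2/p}$ and $1+\sqrt t$ into $1+\sqrt{\log(1/t)}$), and conclude $\BB_\mu=U_\mu$ from agreement on polynomials plus density. Your extra details — the direct check $U(t)J=JT(t)$ on $Hol(\D)$, Bochner measurability from strong continuity, and point evaluations commuting with the integral — make explicit what the paper's short proof leaves implicit.
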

\begin{proof}
By similarity, the estimates \eqref{eq:bound_semigroup_Ap} hold for $U(t)$ on $A^p$ as well. Thus,
\[
    \norm{U_\mu f}_{A^p}\leq\int_{[0,\infty)}\norm{U(t)f}_{A^p}\,d\nu(t)<\infty
\]
under the stated conditions. Equality with $\BB_\mu$ holds on polynomials by the preceding lemma. Their density in $A^p$ gives the unique bounded extension and the equality on $A^p$.
\end{proof}

Put $\gamma_p=\max\{2/p-1,0\}$. If $p\neq2$, then $(e^{-\gamma_pt}U(t))_{t\geq0}$ is uniformly bounded, so $\gamma_p-B_{A^p}$ is sectorial of angle $\pi/2$. For $p=2$, the estimate is instead $\norm{U(t)}\lesssim1+\sqrt t$. We therefore use a positive shift $\delta>0$ and the uniformly bounded semigroup $(e^{-\delta t}U(t))_{t\geq0}$. The additional measure condition below makes the corresponding Laplace-calculus identity applicable.

\begin{lemma}\label{lemma:spectrum_U}
Let $1\leq p<\infty$ and let $\mu$ be a finite positive Borel measure on $(0,1]$. If $p\neq2$, assume
\[
    \int_{(0,1]}t^{1-2/p}\,d\mu(t)<\infty, \qquad
    \mathcal M\mu\in\EE_{\gamma_p-1}.
\]
If $p=2$, assume that, for some $\delta>0$,
\[
    \int_{(0,1]}t^{-\delta}\,d\mu(t)<\infty, \qquad
    \mathcal M\mu\in\EE_{\delta-1}.
\]
Then $\BB_\mu$ on $A^p$ satisfies:
\begin{enumerate}[(i)]
    \item $\displaystyle\sigma(\BB_\mu)=\{\mathcal M\mu(z):\Real z\geq2-2/p\}
    \cup\{\mathcal M\mu(1),\mu(\{1\})\}$.
    \item $\displaystyle\sigma_{ess}(\BB_\mu)=\{\mathcal M\mu(z):\Real z=2-2/p\}
    \cup\{\mu(\{1\})\}$.
    \item If $\lambda\notin\sigma_{ess}(\BB_\mu)$, then
    \[
        \operatorname{Ind}(\lambda I-\BB_\mu)
        =\sum_{\substack{\Real z>2-2/p\\\mathcal M\mu(z)=\lambda}}
        \operatorname{ord}_z(\mathcal M\mu-\lambda) < \infty.
    \]
    The operator $\lambda I-\BB_\mu$ is surjective unless $p>2$ and $\lambda=\mathcal M\mu(1)$; in that case its range has codimension one. Its kernel dimension is the displayed sum plus this range codimension.
    \item The point spectrum satisfies
    \begin{align*}
        \sigma_{point}(\BB_\mu)
        &\subseteq\{\mathcal M\mu(z):\Real z>2-2/p\}\cup\{\mathcal M\mu(1),\mathcal M\mu(2-2/p),\mu(\{1\})\},\\
        \sigma_{point}(\BB_\mu)
        &\supseteq\{\mathcal M\mu(z):\Real z>2-2/p\}\cup\{\mathcal M\mu(1)\}.
    \end{align*}
\end{enumerate}
\end{lemma}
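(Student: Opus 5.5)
The plan is to mirror the proof of \Cref{lemma:spectrum_T}, now for the semigroup $(U(t))_{t\geq0}$ on $A^p$ and its generator $B_{A^p}$. Let $\nu=\kappa(\mu)$. We set $a=\gamma_p$ when $p\neq2$ and $a=\delta$ when $p=2$.

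\textbf{Step 1: the operator as a function of the generator.} By \Cref{lemma:semigroup_U_Bergman}, the estimates \eqref{eq:bound_semigroup_Ap} and similarity, the semigroup $(e^{-at}U(t))_{t\geq0}$ is uniformly bounded. For $p=2$ this uses $1+\sqrt t\lesssim e^{\delta t}$. Hence $aI-B_{A^p}$ is sectorial of angle $\pi/2$.

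The integrability hypothesis reads $\int e^{at}\,d\nu(t)<\infty$. The hypothesis $\mathcal M\mu\in\EE_{a-1}$ is equivalent to $\LL\nu\in\EE_a$. So \eqref{eq:average_calculus} together with \Cref{lemma:B_equals_U} gives
\[
\BB_\mu=U_\mu=\LL\nu(-B_{A^p})=(\LL\nu)_a(aI-B_{A^p}).
\]
For $p=2$, the condition $\int t^{-\delta}\,d\mu<\infty$ implies the sufficient condition of \Cref{lemma:B_equals_U}, since $\sqrt{\log(1/t)}\lesssim t^{-\delta}$.

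\textbf{Step 2: spectral mapping.} By \Cref{lemma:spectrum_U_Bergman}, the extended spectrum of $aI-B_{A^p}$ is
\[
\{\Real w\geq a-\gamma_p\}\cup\{a,\infty\},
\]
and its extended essential spectrum is $\{\Real w=a-\gamma_p\}\cup\{\infty\}$. Apply \eqref{eq:SMT} to $(\LL\nu)_a$ and translate, using $(\LL\nu)_a(a-\lambda)=\LL\nu(-\lambda)=\mathcal M\mu(1-\lambda)$ together with the value $\LL\nu(\infty)=\mu(\{1\})$.

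Under this translation:
\begin{itemize}
\item the half-plane $\Real\lambda\leq 2/p-1$ becomes $\Real z\geq 2-2/p$;
\item the isolated eigenvalue $\lambda=0$ becomes $z=1$, which contributes $\mathcal M\mu(1)$;
\item the essential line becomes $\Real z=2-2/p$.
\end{itemize}
This yields (i) and (ii). When $p>2$ the eigenvalue $0$ lies in the resolvent half-plane, which is why $\mathcal M\mu(1)$ appears separately in (i). When $p\leq2$ it lies in the closed half-plane already, so the extra term is harmless. For $p=2$ the shift $\delta$ cancels in the translation.

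\textbf{Step 3: Fredholm properties and index.} Take $\lambda\notin\sigma_{ess}(\BB_\mu)$. Then $\lambda\neq\mu(\{1\})$. Also $\lambda\neq l_0$, where $l_0=(\LL\nu)_a(0)$ corresponds to a point of the essential line, or to $0$ itself, which lies in the spectrum. Since $\lambda\neq l_0,l_\infty$, the factorization \eqref{eq:factorization_FC} applies and $M_\lambda$ is finite. Formula \eqref{eq:index_FC}, combined with the indices from \Cref{lemma:spectrum_U_Bergman}, gives the following contributions:
\begin{itemize}
\item each zero with $\Real z>2-2/p$ contributes $\operatorname{ord}_z\cdot1$;
\item for $p>2$, the point $z=1$ contributes index $0$.
\end{itemize}
This gives the displayed index formula.

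For surjectivity, if no $z\in M_\lambda$ corresponds to the generator point $0$ with $p>2$, then each factor is surjective by \Cref{lemma:spectrum_U_Bergman}, hence so is $\lambda I-\BB_\mu$. In the exceptional case $p>2$ and $\lambda=\mathcal M\mu(1)$, the factor $-B_{A^p}$ has range of codimension one. The remaining factors are surjective and commute with it, and $g_\lambda(A)$ is invertible. So the range of the product equals the range of the product of the remaining factors composed with $T_0=(-B)(I+B)^{-1}$ in suitable order. Since the factors commute, we can place $T_0$ last, and its codimension-one range forces codimension one for the product. The kernel dimension then follows from $\dim\ker=\operatorname{Ind}+\codim\ran$.

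\textbf{Step 4: point spectrum.} Part (iv) follows from \eqref{eq:SMT_point}. The set $M=\{0,\infty\}\cap\tilde\sigma(aI-B_{A^p})$ contributes $l_0$ and $\mu(\{1\})$. Here $l_0$ equals $\mathcal M\mu(2-2/p)$ when $a=\gamma_p$; for $p=2$ one checks that $0$ is not in the spectrum after the shift, so no extra term arises.

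\textbf{Main obstacle.} I expect the main difficulty to be this bookkeeping at the endpoint $l_0$ and at the isolated eigenvalue. The cases where $\mathcal M\mu(1)$ coincides with an essential value or with other zeros need care. There the codimension argument must use commutativity of the factors in \eqref{eq:factorization_FC} carefully.
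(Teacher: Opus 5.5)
Your Steps 1, 2 and 4 follow the paper's route. Step 3, however, has a genuine gap in exactly the case you flag, $p>2$ and $\lambda=\mathcal M\mu(1)$.

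For $p>2$ you take $a=\gamma_p=0$, so $l_0=\LL\nu(0)=\mathcal M\mu(1)$. Your assertion $\lambda\neq l_0$ is therefore false in that case. The factorization \eqref{eq:factorization_FC} assumes $\lambda\neq l_0$ and only divides out zeros lying inside the sector, so it is not available there. You also become inconsistent: you assert $\lambda\neq l_0$ and then put $z=1$ into $M_\lambda$.

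Your remedy, inserting the factor $(-B_{A^p})(I+B_{A^p})^{-1}$ with some multiplicity, does not work either. The generator point $0$ is the vertex of the sector. The hypothesis $\mathcal M\mu\in\EE_{-1}$ only gives holomorphy of $\mathcal M\mu$ on $1+S_\omega$, with a polynomial limit at $z=1$. So $\operatorname{ord}_1(\mathcal M\mu-\lambda)$ is undefined in general. There may be no integer $n_0$ for which $(l_0-\LL\nu(w))\bigl((1+w)/w\bigr)^{n_0}$ lies in $\EE$ and is nonzero at $0\in\tilde\sigma(-B_{A^p})$.

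For example, $d\nu(t)=\tfrac12(1+t)^{-3/2}\,dt$ gives $\LL\nu\in\EE$, but $\LL\nu(0)-\LL\nu(w)\sim\sqrt{\pi w}$ as $w\to0$. Taking $n_0=0$ gives a $g_\lambda$ vanishing at $0$, so $g_\lambda(-B_{A^p})$ is not invertible. Taking $n_0\geq1$ gives an unbounded function.

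The missing idea is to split off the isolated eigenvalue rather than factor through it. The paper uses the $U(t)$-invariant decomposition
\[
A^p=\C\oplus\Bigl\{f\in A^p:\int_0^1f(\xi)\,d\xi=0\Bigr\},
\]
which is the image under $J$ of $D^p=\C\oplus\{f:f(1)=0\}$.

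On $\C$, $\BB_\mu$ is multiplication by $\mathcal M\mu(1)$. On the second summand, the restricted generator has spectrum $\{\Real\lambda\leq 2/p-1\}$, which is bounded away from the vertex. So the factorization applies there with no condition at $l_0$, and it gives surjectivity and the index sum. The constant summand then adds one to both the kernel dimension and the range codimension exactly when $\lambda=\mathcal M\mu(1)$, and adds nothing to the index.

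Some smaller slips:
\begin{itemize}
\item The extended spectrum of $aI-B_{A^p}$ is $\{\Real w\geq a+1-2/p\}\cup\{a,\infty\}$, not $\{\Real w\geq a-\gamma_p\}\cup\{a,\infty\}$. These differ for $p>2$, and the same correction applies to the essential line.
\item For $p>2$ the value $l_0$ in Step 4 is $\mathcal M\mu(1)$, not $\mathcal M\mu(2-2/p)$. The stated inclusion still holds.
\item For $p=2$, $\lambda\neq l_0$ does not follow from $\lambda\notin\sigma_{ess}(\BB_\mu)$. This is harmless, because $0\notin\tilde\sigma(\delta I-B_{A^2})$.
\end{itemize}
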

\begin{proof}
The assumptions, \Cref{lemma:B_equals_U} and \eqref{eq:average_calculus} give
\[
    \BB_\mu = U_\mu=\int_{[0,\infty)}U(t)\,d\nu(t)=\LL\nu(-B_{A^p}),
\]
using the shift $\gamma_p$ for $p\neq2$ and $\delta$ for $p=2$. Apply \eqref{eq:SMT} and \Cref{lemma:spectrum_U_Bergman} to obtain (i) and (ii). The value $\mathcal M\mu(1)$ comes from the constant eigenfunction. It is already in the half-plane image for $p\leq2$, so retaining it in all cases avoids a separate formula.

For (iii), away from the isolated point $0$ when $p>2$, the zero set in the factorization is finite: a finite accumulation point would contradict holomorphy, while accumulation at an endpoint would put $\lambda$ in the essential spectrum. Thus, the factorization and \eqref{eq:index_FC}, as in \Cref{lemma:spectrum_T}, give the sum over the open half-plane. The only additional issue is the isolated eigenvalue $0$ of $B_{A^p}$ when $p>2$. In that case,
\[
    A^p=\C\oplus\left\{f\in A^p:\int_0^1f(\xi)\,d\xi=0\right\}
\]
is an invariant decomposition for $U(t)$. This follows by applying $J$ to the invariant decomposition $D^p=\C\oplus\{f:f(1)=0\}$. On the constant summand, $\BB_\mu$ is multiplication by $\mathcal M\mu(1)$. On the other summand, the factorization gives surjectivity for $\lambda$ outside the essential spectrum. Thus, the constant summand contributes one to both the kernel dimension and range codimension precisely when $\lambda=\mathcal M\mu(1)$, and contributes zero to the index.

Finally, \eqref{eq:SMT_point} gives (iv). For $p<2$, the finite exceptional value is $\mathcal M\mu(2-2/p)$; for $p>2$, it is $\mathcal M\mu(1)$, already an eigenvalue. With the positive shift used for $p=2$, only the value at infinity can be additional. The stated inclusion covers all three cases.
\end{proof}

\begin{lemma}\label{lemma:duality_Bloch}
Let $\mu$ be a finite positive Borel measure on $(0,1]$ such that $\int_{(0,1]}t^{-1}\,d\mu(t)<\infty$. Then $\Ha_\mu$ is bounded on $\Bloch$ and
\[
    \langle\Ha_\mu f,g\rangle=\langle f,\BB_\mu g\rangle,
    \quad f\in\Bloch,\ g\in A^1.
\]
\end{lemma}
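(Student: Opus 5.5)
Boundedness of $\Ha_\mu$ on $\Bloch$ under $\int_{(0,1]}t^{-1}\,d\mu(t)<\infty$ is part (iii) of Theorem A, i.e.\ \cite[Theorems 2.1 and 4.2]{GalanopoulosPapadimitrakis2006}. So the work lies in the duality identity, and I will follow the proof of \Cref{lemma:duality_BMOA}: verify the identity on polynomials $g$ via the coefficient form of the Bloch--Bergman pairing, then extend by density.

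\emph{Step 1: boundedness of $\BB_\mu$ on $A^1$.} Since $t^{1-2/p}=t^{-1}$ for $p=1$, \Cref{lemma:B_equals_U} shows that $\BB_\mu$ extends from polynomials to a bounded operator $\BB_\mu=U_\mu$ on $A^1$. Both sides of the claimed identity are then bounded (anti)linear functionals in $g\in A^1$ for fixed $f\in\Bloch$, because the pairing is a bounded sesquilinear form on $\Bloch\times A^1$.

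\emph{Step 2: polynomials.} Take $f(z)=\sum a_kz^k\in\Bloch$ and $g(z)=\sum_{n=0}^d b_nz^n$. Write $\Ha_\mu f=\sum_n\bigl(\sum_{k\le n}c_{n,k}a_k\bigr)z^n$, where the $c_{n,k}$ are real. The coefficient formula of the pairing reduces, for a polynomial $g$, to a finite sum: the limit $r\to1^-$ is trivial because only finitely many terms are present. This gives
\[
\langle\Ha_\mu f,g\rangle=\sum_{n=0}^d\frac{b_n}{n+1}\sum_{k=0}^n c_{n,k}\overline{a_k}
=\sum_{k=0}^d\frac{\overline{a_k}}{k+1}\Bigl((k+1)\sum_{n=k}^d\frac{c_{n,k}}{n+1}b_n\Bigr).
\]
By \eqref{eq:Bmu-definition}, the bracketed quantity is exactly the $k$-th Taylor coefficient of $\BB_\mu g$. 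Since $g$ is a polynomial, $\BB_\mu g$ is a polynomial of degree at most $d$. The last expression is therefore $\langle f,\BB_\mu g\rangle$, again by the coefficient formula with finitely many terms.

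\emph{Step 3: density.} Polynomials are dense in $A^1$. Both sides are continuous in $g$: the left side because the pairing is bounded, and the right side because $\BB_\mu$ is bounded on $A^1$ by Step 1. Hence the identity extends to all $g\in A^1$.

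The main point needing care is the coefficient form of the pairing in Step 2, namely justifying $\lim_{r\to1^-}\int_{r\D}\overline{f}g\,dA=\sum\overline{a_n}b_n/(n+1)$ when $g$ is a polynomial. This follows from orthogonality of monomials on $r\D$: the integral equals $\sum_{n\le d}\overline{a_n}b_nr^{2n+2}/(n+1)$, a finite sum, so the limit $r\to1^-$ is immediate. A second point is that the identification $\BB_\mu=U_\mu$ on $A^1$ is needed only to know that $\BB_\mu$ is bounded; the formula \eqref{eq:Bmu-definition} is used only on polynomials.
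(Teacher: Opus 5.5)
Your proposal is correct and follows the paper's proof essentially verbatim: you get boundedness of $\BB_\mu$ on $A^1$ from \Cref{lemma:B_equals_U} with $p=1$, do the finite coefficient computation of the Bloch--Bergman pairing for polynomial $g$, and extend by density of polynomials in $A^1$. The only difference is a citation detail: for boundedness of $\Ha_\mu$ on $\Bloch$ the paper cites \cite[Theorem 5.1]{GalanopoulosPapadimitrakis2006}, not Theorems 2.1 and 4.2, which it uses for BMOA; your appeal to Theorem A(iii) covers this either way.
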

\begin{proof}
Boundedness of $\Ha_\mu$ follows from \cite[Theorem 5.1]{GalanopoulosPapadimitrakis2006}, and boundedness of $\BB_\mu$ follows from \Cref{lemma:B_equals_U}. Let $f(z)=\sum a_kz^k\in\Bloch$ and let $g(z)=\sum_{n=0}^d b_nz^n$ be a polynomial. Using the coefficient representation of the Bloch--Bergman pairing,
\begin{align*}
    \langle\Ha_\mu f,g\rangle
    &=\sum_{n=0}^d\frac{b_n}{n+1}\sum_{k=0}^n c_{n,k}\overline{a_k}\\
    &=\sum_{k=0}^d\frac{\overline{a_k}}{k+1}
    \left((k+1)\sum_{n=k}^d\frac{c_{n,k}}{n+1}b_n\right)
    =\langle f,\BB_\mu g\rangle.
\end{align*}
Density of polynomials in $A^1$ extends this equality to all $g\in A^1$.
\end{proof}

As in the BMOA case, anti-linear duality and the fact that the real matrix $\BB_\mu$ commutes with coefficientwise conjugation give
\[
    \sigma(\Ha_\mu)=\sigma(\BB_\mu)
    , \qquad
    \sigma_{ess}(\Ha_\mu)=\sigma_{ess}(\BB_\mu)
    .
\]
For $p=1$, \Cref{lemma:spectrum_U} gives surjectivity outside the essential spectrum. The kernel and range identities for adjoints therefore prove the injectivity and range-codimension statement in \Cref{th:spectral_result_Hausdorff_Bloch}.

\section{Examples: the spectrum of some Hausdorff matrices on BMOA and \texorpdfstring{$\Bloch$}{B}}\label{sec:examples}

We conclude with two families of Hausdorff matrices considered in \cite{Rhoades1971}. The measures below are absolutely continuous, so they have no atom at $1$.

\subsection{Gamma operators}
Let $\alpha>0$ and $s>1$. Consider the measure
\[
    d\mu_{\alpha,s}(t)=\frac{s^\alpha}{\Gamma(\alpha)}
    t^{s-1}\bigl(\log(1/t)\bigr)^{\alpha-1}\,dt, \quad 0<t<1.
\]
Its moments are $\mu_n=(s/(n+s))^\alpha$. The associated Hausdorff operator is the Gamma operator $\Gamma_s^\alpha=\Ha_{\mu_{\alpha,s}}$. Since
\[
    \int_{(0,1]}t^{-1}\,d\mu_{\alpha,s}(t)
    =\left(\frac{s}{s-1}\right)^\alpha<\infty,
\]
it is bounded on BMOA and $\Bloch$. Its Mellin transform is
\begin{align*}
    \mathcal M\mu_{\alpha,s}(z)
    &=\frac{s^\alpha}{\Gamma(\alpha)}\int_0^1
    t^{s+z-2}\bigl(\log(1/t)\bigr)^{\alpha-1}\,dt\\
    &=\frac{s^\alpha}{\Gamma(\alpha)}\int_0^\infty
    e^{-(s+z-1)t}t^{\alpha-1}\,dt
    =\left(\frac{s}{s+z-1}\right)^\alpha, \quad \Real z>1-s.
\end{align*}
We use the branch determined by the principal logarithm of $s+z-1$. This function is holomorphic on $\C\setminus(-\infty,1-s]$, is holomorphic at $0$, and is bounded on every sector $S_\omega$ with $\omega<\pi$. It is $O(|z|^{-\alpha})$ at infinity in such a sector. Hence, $\mathcal M\mu_{\alpha,s}\in\EE$.

By \Cref{th:spectral_result_Hausdorff_BMOA,th:spectral_result_Hausdorff_Bloch}, on both BMOA and $\Bloch$ we have
\begin{align*}
    \sigma(\Gamma_s^\alpha)
    &=\left\{\left(\frac{s}{s+z-1}\right)^\alpha:\Real z\geq0\right\}\cup\{0\},\\
    \sigma_{ess}(\Gamma_s^\alpha)
    &=\left\{\left(\frac{s}{s+z-1}\right)^\alpha:\Real z=0\right\}\cup\{0\}.
\end{align*}
For $\lambda\notin\sigma_{ess}(\Gamma_s^\alpha)$, the operator $\lambda I-\Gamma_s^\alpha$ is injective and
\[
    \codim\ran(\lambda I-\Gamma_s^\alpha)
    =\left|\left\{z\in\C:\Real z>0,\ \left(\frac{s}{s+z-1}\right)^\alpha=\lambda\right\}\right|.
\]
Here every zero is simple, since the derivative of the Mellin transform does not vanish.

\subsection{Generalized Cesàro operators}
Let $\alpha>0$ and $s>1$. Consider
\[
    d\tilde\mu_{\alpha,s}(t)=\frac{\Gamma(\alpha+s)}{\Gamma(\alpha)\Gamma(s)}
    t^{s-1}(1-t)^{\alpha-1}\,dt, \quad 0<t<1.
\]
Its moments are
\[
    \tilde\mu_n=\frac{\Gamma(\alpha+s)\Gamma(n+s)}
    {\Gamma(n+s+\alpha)\Gamma(s)}.
\]
The generalized Cesàro operator is $\CC_s^\alpha=\Ha_{\tilde\mu_{\alpha,s}}$. It is bounded on BMOA and $\Bloch$, since
\[
    \int_{(0,1]}t^{-1}\,d\tilde\mu_{\alpha,s}(t)
    =\frac{\alpha+s-1}{s-1}<\infty.
\]
Its Mellin transform is
\begin{align*}
    \mathcal M\tilde\mu_{\alpha,s}(z)
    &=\frac{\Gamma(\alpha+s)}{\Gamma(\alpha)\Gamma(s)}
    \int_0^1 t^{s+z-2}(1-t)^{\alpha-1}\,dt\\
    &=\frac{\Gamma(\alpha+s)\Gamma(s+z-1)}
    {\Gamma(s+z-1+\alpha)\Gamma(s)}, \quad \Real z>1-s.
\end{align*}
It extends holomorphically to $\C\setminus(-\infty,1-s]$. The standard Gamma-ratio estimate
\[
    \frac{\Gamma(z+\alpha)}{\Gamma(z)}
    =z^\alpha\left(1+\frac{\alpha(\alpha-1)}{2z}+O(|z|^{-2})\right)
\]
holds as $|z|\to\infty$ in sectors bounded away from the negative real axis, see \cite{ErdelyiTricomi1951}. Thus, $\mathcal M\tilde\mu_{\alpha,s}(z)=O(|z|^{-\alpha})$ on $S_\omega$, $\omega<\pi$. Since it is also holomorphic at $0$, it belongs to $\EE$.

Consequently, on both BMOA and $\Bloch$,
\begin{align*}
    \sigma(\CC_s^\alpha)
    &=\left\{\frac{\Gamma(\alpha+s)\Gamma(s+z-1)}
    {\Gamma(s+z-1+\alpha)\Gamma(s)}:\Real z\geq0\right\}\cup\{0\},\\
    \sigma_{ess}(\CC_s^\alpha)
    &=\left\{\frac{\Gamma(\alpha+s)\Gamma(s+z-1)}
    {\Gamma(s+z-1+\alpha)\Gamma(s)}:\Real z=0\right\}\cup\{0\}.
\end{align*}
For $\lambda\notin\sigma_{ess}(\CC_s^\alpha)$, the operator $\lambda I-\CC_s^\alpha$ is injective and
\[
    \codim\ran(\lambda I-\CC_s^\alpha)
    =\sum_{\substack{\Real z>0\\\mathcal M\tilde\mu_{\alpha,s}(z)=\lambda}}
    \operatorname{ord}_z(\mathcal M\tilde\mu_{\alpha,s}-\lambda).
\]

\section{Acknowledgements}
This work is the result of a research stay undertaken by one of the authors, Alejandro Mahillo, in Thessaloniki during the summer of 2024. He would like to thank Professors Siskakis and Galanopoulos for their hospitality during that period, as well as the entire Department of Mathematics at Aristotle University of Thessaloniki.

\appendix
\section{An auxiliary upper bound on \texorpdfstring{$D^p$}{Dp}}

The estimate below is the special case of the weighted-composition estimates in the proof of \cite[Theorem 6.3]{AbadiasOliva-Maza2024} needed in \Cref{lemma:range_A_Dirichlet}. We give a short verification in our notation. Recall that $\phi_t$ is defined in \eqref{eq:semigroups}.

\begin{lemma}\label{lemma:dirichlet_growth_bound}
Let $1\leq p<\infty$, $m\in\N_0$, and $\lambda\in\C$ satisfy $m+\Real\lambda>2/p$. If $f\in D^p$ has a zero of order at least $m$ at $0$, then
\[
    \norm*{\phi_t\left(\frac{1-\phi_t}{1-(\cdot)}\right)^\lambda(f\circ\phi_t)}_{D^p}
    \lesssim e^{(\Real\lambda+1-2/p)t}\norm{f}_{D^p}, \quad t\geq0.
\]
The implicit constant may depend on $p,m,\lambda$, but not on $t$ or $f$.
\end{lemma}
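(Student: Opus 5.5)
We compute the expression explicitly and reduce the estimate to the change-of-variables bound for composition operators on $A^p$, applied once to $h$ and once to $h'$. Put $w_t(z)=(e^{-t}-1)z+1=1-(1-e^{-t})z$. Then
\[
1-\phi_t(z)=\frac{1-z}{w_t(z)},\qquad \phi_t(z)=\frac{e^{-t}z}{w_t(z)},\qquad \phi_t'(z)=\frac{e^{-t}}{w_t(z)^2}.
\]
Hence $\bigl((1-\phi_t)/(1-z)\bigr)^\lambda=w_t^{-\lambda}$, with the principal branch, since $\Real w_t>0$ on $\D$. Two elementary facts about $w_t$ on $\D$ will be used throughout:
\[
e^{-t}\leq|w_t(z)|\leq2,\qquad |w_t^{-\lambda}|\leq|w_t|^{-\Real\lambda}e^{\pi|\Imag\lambda|/2}.
\]
Write $f=z^mh$. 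The backward shift is bounded on $D^p$, so $\norm{h}_{D^p}\lesssim\norm{f}_{D^p}$, and also $\norm{h}_{A^p}\lesssim\norm{h}_{D^p}$. With this notation the function to be estimated is
\[
F_t(z)=e^{-(m+1)t}\,z^{m+1}\,w_t(z)^{-\lambda-m-1}\,h(\phi_t(z)).
\]
Since $F_t(0)=0$, it suffices to bound $\norm{F_t'}_{A^p}$.

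The key tool is the Bergman change of variables for the univalent self-map $\phi_t$. For every $g\in A^p$,
\[
\norm{(\phi_t')^{2/p}(g\circ\phi_t)}_{A^p}\leq\norm{g}_{A^p}.
\]
We will apply this with $g=h$ and with $g=h'$. We also note that $(\phi_t')^{2/p}=e^{-2t/p}w_t^{-4/p}$. Differentiating $F_t$ by the product rule gives three terms. Two of them contain $h\circ\phi_t$:
\[
e^{-(m+1)t}\bigl[(m+1)z^mw_t^{-\lambda-m-1}+(\lambda+m+1)(1-e^{-t})z^{m+1}w_t^{-\lambda-m-2}\bigr]\,h\circ\phi_t.
\]
The third contains $h'\circ\phi_t$:
\[
e^{-(m+2)t}z^{m+1}w_t^{-\lambda-m-3}\,h'\circ\phi_t.
\]
In each term we insert the factor $(\phi_t')^{2/p}$ and compensate for it. Each term then becomes a bounded multiplier times $(\phi_t')^{2/p}(g\circ\phi_t)$. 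For the third term the multiplier is
\[
e^{-(m+2-2/p)t}z^{m+1}w_t^{-\lambda-m-3+4/p}.
\]
For the first two terms the multipliers are
\[
e^{-(m+1-2/p)t}z^mw_t^{-\lambda-m-1+4/p}\quad\text{and}\quad e^{-(m+1-2/p)t}z^{m+1}w_t^{-\lambda-m-2+4/p}.
\]

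The remaining and main step is to check that the sup norm of each multiplier is $\lesssim e^{(\Real\lambda+1-2/p)t}$. The only concern is that a power of $w_t$ might blow up faster than expected. The relevant exponents of $|w_t|^{-1}$ are
\[
\Real\lambda+m+3-4/p,\qquad \Real\lambda+m+1-4/p,\qquad \Real\lambda+m+2-4/p.
\]
We need them to be nonnegative, or else bounded via $|w_t|\leq2$. The hypothesis $m+\Real\lambda>2/p$ gives $\Real\lambda+m+2-4/p>2-2/p\geq0$ and $\Real\lambda+m+3-4/p>0$. For these two exponents the lower bound $|w_t|\geq e^{-t}$ yields the factors $e^{(\Real\lambda+m+2-4/p)t}$ and $e^{(\Real\lambda+m+3-4/p)t}$ respectively. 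Combining these with the prefactors gives exactly $e^{(\Real\lambda+1-2/p)t}$ in both cases.

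The first term has a better exponent but needs more care. Its exponent $\Real\lambda+m+1-4/p$ may be negative; in that case $|w_t|\leq2$ bounds the power by a constant. Since $e^{-(m+1-2/p)t}\leq e^{(\Real\lambda+1-2/p)t}$ as long as $\Real\lambda+m\geq0$, which holds by hypothesis, the estimate still closes. If instead the exponent is nonnegative, the same computation as before gives an even better bound.

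Summing the three terms then yields the claim, with a constant depending only on $p$, $m$ and $\lambda$ (through $m+1$, $|\lambda+m+1|$ and $e^{\pi|\Imag\lambda|/2}$).
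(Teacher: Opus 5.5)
Your proof is correct and is essentially the paper's argument: you use the same factorization $f=z^mh$, the same change-of-variables bound applied to $h$ and $h'$, and the same two multipliers with exponents $\Real\lambda+m+2-4/p$ and $\Real\lambda+m+3-4/p$; the only difference is that you differentiate the factor $z^{m+1}$ instead of removing it as a bounded multiplier on $D^p$, which produces your extra first term. There is one small slip in that extra term: $e^{-(m+1-2/p)t}\leq e^{(\Real\lambda+1-2/p)t}$ requires $\Real\lambda+m\geq 4/p-2$, not $\Real\lambda+m\geq 0$, but this still follows from the hypothesis, since $2/p\geq 4/p-2$ for $p\geq 1$.
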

\begin{proof}
Put $f_m(z)=f(z)/z^m$, with the removable singularity filled in. Boundedness of the backward shift on $D^p$ gives $\norm{f_m}_{D^p}\lesssim\norm{f}_{D^p}$. Write
\[
    g_t(z)=\left(\frac{\phi_t(z)}{z}\right)^{m+1}
    \left(\frac{1-\phi_t(z)}{1-z}\right)^\lambda,
    \qquad h_t=g_t(f_m\circ\phi_t).
\]
The function in the statement is $z^{m+1}h_t$, so it is enough to estimate $\|h_t\|_{D^p}$. We have $h_t(0)=e^{-(m+1)t}f_m(0)$ and
\[
    h_t'=g_t'(f_m\circ\phi_t)+g_t\phi_t'(f_m'\circ\phi_t).
\]
A change of variable gives $\norm{(\phi_t')^{2/p}(h\circ\phi_t)}_{A^p}\leq\norm{h}_{A^p}$. Direct differentiation, using $m+\Real\lambda>2/p$, gives
\[
    \norm{g_t'(\phi_t')^{-2/p}}_\infty
    +\norm{g_t(\phi_t')^{1-2/p}}_\infty
    \lesssim e^{(\Real\lambda+1-2/p)t}.
\]
Indeed, the two expressions are
\[
    \frac{(m+\lambda+1)(1-e^{-t})e^{-(m+1-2/p)t}}
    {((e^{-t}-1)z+1)^{m+\lambda+2-4/p}}, \qquad
    \frac{e^{-(m+2-2/p)t}}
    {((e^{-t}-1)z+1)^{m+\lambda+3-4/p}}.
\]
The real parts of the denominator exponents are positive, and $|1+(e^{-t}-1)z|\geq e^{-t}$. Combining these estimates with $\norm{f_m}_{A^p}\lesssim\norm{f_m}_{D^p}$ proves the result.
\end{proof}

\printbibliography
\end{document}